\definecolor{my-blue}{rgb}{0.0,0.0,0.6}
\definecolor{my-red}{rgb}{0.5,0.0,0.0}
\definecolor{my-green}{rgb}{0.0,0.5,0.0}
\definecolor{nicos-red}{rgb}{0.75,0.0,0.0}
\definecolor{light-gray}{gray}{0.6}
\definecolor{really-light-gray}{gray}{0.8}
\definecolor{sussexg}{rgb}{0.0,0.5,0.5}
\definecolor{sussexp}{rgb}{0.5,0.0,0.5}
\newtheorem{theorem}{\color{darkblue}{\sc Theorem}}[section]
\newtheorem{lemma}[theorem]{\color{darkblue} \sc Lemma}
\newtheorem{proposition}[theorem]{\color{darkblue} \sc Proposition}
\newtheorem{corollary}[theorem]{\color{darkblue} \sc Corollary}
\newtheorem{example}{\color{darkblue} \sc Example}
\newtheorem{conjecture}[theorem]{\color{darkblue} \sc Conjecture}
\newtheorem{question}[theorem]{\color{darkblue} \sc Question}
\newtheorem{assumption}[theorem]{\color{darkblue} \bf Assumption}
\newtheorem{definition}[theorem]{\color{darkblue} \it Definition}
\numberwithin{equation}{section}
\theoremstyle{remark}
\newtheorem{remark}[theorem]{\color{darkblue} Remark}
\newcommand{\be}{\begin{equation}}
\newcommand{\ee}{\end{equation}}
\providecommand{\abs}[1]{\vert#1\vert}
\newcommand{\fl}[1]{\lfloor{#1}\rfloor} 
\newcommand{\ce}[1]{\lceil{#1}\rceil}
\def\sD{\mathscr{D}}
\newcommand{\eup}{\text{\normalfont e}}
\def\bN{\mathbb{N}}
\def\bP{\mathbb{P}}
\def\bZ{\mathbb{Z}}
\def\cZ{\mathcal{Z}}
\def\bT{\mathbb{T}}
\def\cL{\mathcal{L}}
\def\cJ{J} 
\def\om{\omega}
\def\e{\varepsilon}
 \def\Z{\bZ}
\def\N{\bN}
\def\P{\bP}
\def \sM{\mathscr M}
\def\sS{\mathcal{S}}
\newcommand{\Er}{E}
\newcommand{\dif}{\textup{d}}
\newcommand{\GW}{\textup{GW}}
\def\P{\bP} 
\definecolor{partcolor1}{rgb}{0.0,0.5,0.0}
\definecolor{partcolor2}{rgb}{0.0,0.5,0.0}
\definecolor{darkgreen}{rgb}{0.0,0.5,0.0}
\definecolor{darkblue}{rgb}{0.5,0.1,0.5}
\definecolor{nicosred}{rgb}{0.65,0.1,0.1}
\definecolor{light-gray}{gray}{0.7}
\begin{document}
\usdate
\title[TASEP on trees]
{The TASEP on Galton--Watson trees}
\author{Nina Gantert}
\address{Nina Gantert, Technical University of Munich, Germany}
\email{nina.gantert@tum.de}
\author{Nicos Georgiou}
\address{Nicos Georgiou, University of Sussex, UK.}
\email{N.Georgiou@sussex.ac.uk}
\author{Dominik Schmid}
\address{Dominik Schmid, Technical University of Munich, Germany}
\email{dominik.schmid@tum.de}
\keywords{totally asymmetric simple exclusion process, exclusion process, trees, current, invariant measure, disentanglement}
\subjclass[2010]{Primary: 60K35; Secondary: 60K37,60J75,82C20} 
\date{\today}
\begin{abstract}
We study the totally asymmetric simple exclusion process (TASEP) on trees where particles are generated at the root. Particles can only jump away from the root, and they jump from $x$ to $y$ at rate $r_{x,y}$ provided $y$ is empty. Starting from the all empty initial condition, we show that the distribution of the configuration at time $t$ converges to an equilibrium. We study the current and give conditions on the transition rates such that the current is of linear order or such that there is zero current, i.e.\ the particles block each other.
 A key step, which is of independent interest, is to bound the first generation at which the particle trajectories of the first $n$ particles decouple.
\end{abstract}
\maketitle
\vspace*{-0.35cm}  
\section{Introduction} 

The one-dimensional totally asymmetric simple exclusion process (TASEP) is among the most studied particle systems. It is a classical model which describes particle movements or traffic jams, studied by scientists from statistical mechanics, probability and combinatorics over several decades.  The model is simple but shows a variety of phase transitions and phenomena such as the formation of shocks \cite{FF:ShockFluctuations, L:Book2}.  It can be briefly described as follows. A set of indistinguishable particles are individually placed on distinct integer sites. Each site is endowed with a Poisson clock, independently of all others, which rings at rate 1. Should a particle occupy a given site, the particle attempts to jump one unit to the right when the site clock rings, and the jump is performed if and only if the target site is unoccupied, otherwise it is suppressed. This last condition is the exclusion rule.  One-dimensional TASEP is only a particular example of an exclusion process, with a degenerate jump kernel on $\Z\times \Z$ given by $p(x,x+1) = 1$ for all $x \in \Z$. When different jump kernels are considered, exclusion processes can be defined on any graph, including higher dimensional lattices or trees and they have also been studied extensively; see for example \cite{L:Book2}. 

In this article we define the TASEP on (directed) rooted trees. This way the particle system retains the total asymmetry of its one-dimensional analogue, while having more space to explore. Figure \ref{fig:snap} shows a snapshot of the evolution.  In our setup, particles jump only in the direction pointing away from the root under the exclusion rule and choose their target site according to some jump kernel, that puts mass only on the children of their current location. In addition, we create particles at the root at a constant rate through a reservoir. Our underlying tree may be random as long as it doesn't have leaves so particles cannot be eternally trapped. We will restrict our attention to TASEP on supercritical Galton--Watson trees without leaves, including the special case of regular trees. Moreover, we will assume that the tree is initially empty.  

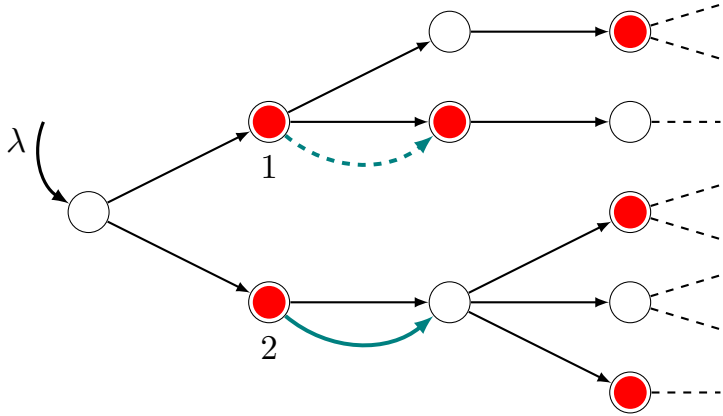
\begin{figure}[h]\label{fig:snap}
\begin{tikzpicture}[scale=1.2,  >=latex]

   \node[shape=circle,scale=1.5,draw] (A) at (0,0){} ;
 	\node[shape=circle,scale=1.5,draw] (B1) at (2,1){} ;
 	\node[shape=circle,scale=1.5,draw] (B2) at (2,-1){} ;
	\node[shape=circle,scale=1.5,draw] (C1) at (4,2) {};
	\node[shape=circle,scale=1.5,draw] (C2) at (4,1) {};
	\node[shape=circle,scale=1.5,draw] (C3) at (4,-1) {};
	
	\node[shape=circle,scale=1.5,draw] (D1) at (6,2) {};
	\node[shape=circle,scale=1.5,draw] (D2) at (6,1) {};
	\node[shape=circle,scale=1.5,draw] (D3) at (6,0) {};	
	\node[shape=circle,scale=1.5,draw] (D4) at (6,-1) {};
	\node[shape=circle,scale=1.5,draw] (D5) at (6,-2) {};		

			\draw[thick, ->] (A) to (B1);	
			\draw[thick,->] (A) to (B2);	
			\draw[thick,->] (B1) to (C1);		
			\draw[thick,->] (B1) to (C2);			
			\draw[thick,->] (B2) to (C3);			
			\draw[thick,->] (C1) to (D1);	
			\draw[thick,->] (C2) to (D2);			
			\draw[thick,->] (C3) to (D3);				
			\draw[thick,->] (C3) to (D4);	
			\draw[thick,->] (C3) to (D5);	
						
			\draw[thick,dashed] (D1) to (7,2.3);		
			\draw[thick,dashed] (D1) to (7,1.7);	

			\draw[thick,dashed] (D2) to (7,1);	
				
			\draw[thick,dashed] (D3) to (7,0.3);	
			\draw[thick,dashed] (D3) to (7,-0.3);	
			
			\draw[thick,dashed] (D4) to (7,-0.7);		
			\draw[thick,dashed] (D4) to (7,-1.3);	
			
			\draw[thick,dashed] (D5) to (7,-2);

 \node[shape=circle, scale=1.2, fill=red] (A1) at (B1){} ;
 \node[shape=circle,scale=1.2, fill = red] (A2) at (C2){} ;
\node[shape=circle,scale=1.2, fill = red] (A3) at (D1){} ;
\node[shape=circle,scale=1.2,  fill = red] (A4) at (D3){} ;
\node[shape=circle,scale=1.2,  fill = red] (A5) at (D5){} ;
\node[shape=circle,scale=1.2, fill= red] (A6) at (B2){} ;

\draw[thick,->,line width=1.2pt] (-0.5,1) to [bend left,out=-40, in=220] (A);	
\node[scale=1.2] (name) at (-0.8,0.8){$\lambda$};
\draw [->,line width=1.5pt, dashed, sussexg] (B1) to [bend right,out=-40,in=220] (C2);
\draw [->,line width=1.5pt, sussexg] (B2) to [bend right,out=-40,in=220] (C3);

\node[scale=1.2] (name) at (2,0.5){$1$};
\node[scale=1.2] (name) at (2,-1.5){$2$};
\end{tikzpicture}
\caption{Snapshot of a Tree-TASEP evolution. Particles enter at the root at rate $\lambda$ and then move down the tree, i.e. their distance from the root can only grow. They attempt a jump when the Poisson clock of an edge in front of them rings and the target site will be the child associated to the edge. The jump is suppressed if the target site is occupied (e.g.\ look at particle 1 attempting to jump at the occupied child) otherwise the jump is performed (e.g.\ particle 2).}
\end{figure}

Ideas to investigate the TASEP on trees can already be found in the physics literature as a natural way to describe transport on irregular structures, like blood, air or water circulations system; see \cite{BM:ASEPtrees, MWE:TASEPregTree, SFR:TASEPnetworks}. Exclusion processes on trees, but with no forbidden directions, were studied when the particles perform symmetric simple random walks; see \cite{CCGS:SpeedTree,GS:SpeedGalton}. 

One-dimensional TASEP provided an early connection between interacting particle systems and last passage percolation (LPP) on the two dimensional lattice, in an i.i.d.~ exponential environment. Viewing the particle system as queues in series, one can utilize Burke's theorem to find a family of invariant LPP models;  see \cite{balazs2006cube}. These models can be exploited to obtain, for example,  sharp variance bounds for last passage times. Burke-type theorems usually imply that the model in question is an integrable example of the KPZ universality class; see \cite{C:KPZReview} for an overview and articles \cite{Bar-Cor-15-, ciech2019order, Cor-Sep-She-14-, OCo-Ort-14b, Sep-12-corr} for other lattice examples having Burke's property. In particular, the exponential corner growth model and the one-dimensional TASEP, which are linked  through specific initial conditions and a height function representation, provably exhibit the correct scalings and Tracy-Widom weak limits associated with the KPZ class \cite{Joh-00}. Recently, it was shown that for a large class of initial conditions, TASEP converges to the KPZ fixed point \cite{matetski2016kpz}.

Coupling the TASEP to a growth model can be done via the current (or aggregated current) of the particle system. The current states how many particles pass through a certain site (or generation) by a given time. Our interests are two-fold. On one hand, we fix a time window and we want to know the current across a given generation by that time. The dual question is to fix a generation window and see how many particles occupy sites in there, by a given time. We study both of these questions. 

Finally, we investigate the law of the process in a finite region for large times to derive properties of the limiting equilibrium measures. 
An important observation is that once two particles are on distinct branches of the tree, they do not effect the transitions of each other. We make use of this observation by locating where the particle trajectories disentangle and the particles start to move independently. Quantifying the location of disentanglement is a key step in our analysis. The proof utilizes combinatorial, geometric and probabilistic arguments. 

In the next subsection we give a formal introduction to the TASEP on trees and present our results on the disentanglement, the current and the large time behaviour of the particles. 
Our main results are Theorem~\ref{thm:DisentanglementGWT}, Theorem \ref{thm:ConvergenceFlow}, Theorem~\ref{thm:LinearCurrentL}, 
Theorem \ref{thm:CurrentfixedT}, Lemma \ref{lem:InvarianceFlow}, and Lemma \ref{lem:DominationSuperflow}.

\subsection{Model and results}  \label{sec:ModelResults}
\subsubsection{TASEP on trees.}
We will work with Galton--Watson trees; see \cite[Chapter 4]{LP:ProbOnTrees} for a general introduction. Let $\bT=(V,\Er,o)$ be an infinite, locally finite, rooted tree with directed edges pointing away from the root $o$, and let $\mathcal{T}$ be the set of all such trees. 
\begin{definition}\label{def:Supercritical}
Let $\mu$ be a distribution on $\N_0=\N \cup \{0\}$ and set $p_\ell:=\mu(\ell)$ for all $\ell \in \N_0$. A \textbf{Galton--Watson tree} with offspring distribution $\mu$ is a tree in $\mathcal{T}$ sampled as follows. We start with the root $o$ and draw a number of children according to $\mu$. Then for each child, we again draw a number of children according to $\mu$ independently, and iterate.
All edges in the tree are directed edges from parents to their respective children.
\end{definition}  For the remainder of the paper, we assume that all Galton--Watson trees are \textbf{supercritical} and \textbf{without leaves}, i.e. \begin{equation}\label{def:expectedOffspings}
\mathfrak{m}:=\sum_{\ell\geq 0}\ell p_{\ell} \in (1,\infty) \qquad \text{and} \qquad p_0=0 \, .
\end{equation}
Note that the Galton--Watson branching process with offspring distribution $\mu$ induces a probability measure $\GW$ on $\mathcal{T}$; see \cite[Chapter 4]{LP:ProbOnTrees}. This includes the special case of regular trees when $\mu$ is a Dirac measure. \\

Next, we fix a tree $\bT=(V,\Er,o) \in \mathcal{T}$ drawn according to $\GW$. On this tree $\bT$, the \textbf{totally asymmetric simple exclusion process (TASEP)} $(\eta_t)_{t \geq 0}$ with a reservoir of intensity $\lambda > 0$ and transition rates $(r_{x,y})_{(x,y) \in \Er}$ is given as follows.
A particle at site $x$ tries to move to $y$ at rate $r_{x,y}$ provided that $(x,y) \in \Er$. However, this move is performed if and only if the target is a vacant site. Moreover, we place a particle at the root at rate $\lambda$ whenever the root is empty. We will choose the transition rates $(r_{x,y})_{(x,y) \in \Er}$ such that $(\eta_t)_{t \geq 0}$
is a Feller process; see \cite{L:interacting-particle} for an introduction. More precisely, $(\eta_t)_{t \geq 0}$ will be the Feller process on the state space $\{ 0,1\}^V$ with generator
\begin{equation}
\cL f(\eta) =  \lambda(1-\eta(o))[f(\eta^{o}) - f(\eta)]  + \sum_{(x,y) \in \Er}  r_{x,y}(1 - \eta(y))\eta(x)[f(\eta^{x,y}) - f(\eta)] 
\end{equation} for all cylinder functions $f$. Here, we use the standard notation
\be \label{eq:etaxy}
\eta^{x,y} (z) = \begin{cases} 
 \eta (z) & \textrm{ for } z \neq x,y\, ,\\
 \eta(x) &  \textrm{ for } z = y\, ,\\
 \eta(y) &  \textrm{ for } z = x\, ,
 \end{cases}
 \quad
 \text{and}
 \quad
 \eta^{x} (z) = \begin{cases} 
 \eta (z) & \textrm{ for } z \neq x\, ,\\
1-\eta(z) &  \textrm{ for } z = x\, ,
 \end{cases}
\ee
to denote swapping and flipping of values in a configuration $\eta \in \{0,1\}^V$ at sites $x,y \in V$.  \\

The following statement gives a sufficient criterion on the transition rates such that the totally asymmetric simple exclusion process on $\bT$ is indeed a Feller process.
\begin{proposition}[c.f.\ Proposition A.1 in \cite{GS:SpeedGalton}]
Assume that for $\GW$-almost every tree in $\mathcal{T}$, the transition rates $(r_{x,y})$ are uniformly bounded from above. Then for $\GW$-almost every tree $\bT$, the TASEP on $\bT$ is a Feller process.
\end{proposition}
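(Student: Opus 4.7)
The plan is to construct $(\eta_t)_{t\geq 0}$ via a Harris-type graphical representation on a fixed realization of the tree and then verify the Feller property, in close analogy with the argument of \cite[Prop.~A.1]{GS:SpeedGalton} adapted to our totally asymmetric setting with a reservoir at the root.

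First I fix a tree $\bT\in\mathcal{T}$ for which the rates $(r_{x,y})$ are uniformly bounded by some $C=C(\bT)<\infty$; by hypothesis this is a $\GW$-almost sure event. On this deterministic, locally finite tree I place independent Poisson point processes of rate $r_{x,y}$ on each directed edge $(x,y)\in\Er$ and one further independent Poisson process of rate $\lambda$ at the root $o$. Each event of the $(x,y)$-process at time $s$ triggers a jump attempt $x\to y$ that is performed if and only if $\eta_{s-}(x)=1$ and $\eta_{s-}(y)=0$; each event of the root process places a particle at $o$ iff $\eta_{s-}(o)=0$.

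The main analytical step is to show that for every $x\in V$ and every $t>0$, the value $\eta_t(x)$ is determined by only finitely many of the above Poisson events, almost surely. Proceeding backward from $(x,t)$, I define a random \emph{backward influence set} $\mathcal{B}(x,t)$ of (vertex, time) pairs whose states must be known: from a query $(z,s)$, one adjoins, for each of the at most $1+\deg^+(z)$ edges incident to $z$ (plus the reservoir if $z=o$), the clock rings in $[0,s]$ together with the other endpoints of these edges. The key estimate is a generation-by-generation expectation bound: conditional on $\bT$, a query at $(z,s)$ produces a Poisson number of new queries with mean at most $C(1+\deg^+(z))s$, and the time horizon $s$ shrinks strictly along every branch of the recursion. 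Iterating this bound and using the a.s.\ local finiteness of $\bT$, one deduces that $|\mathcal{B}(x,t)|$ is a.s.\ finite, so $\eta_t(x)$ is well-defined. The Markov property then follows from the independent increments of the driving Poisson processes, right-continuity of sample paths is immediate from the construction, and the Feller property reduces to continuity of $\eta_t(x)$ with respect to $\eta_0$ in the product topology, which is obtained by truncating $\mathcal{B}(x,t)$ to tree-distance at most $N$ from $x$ and passing $N\to\infty$ via dominated convergence, invoking the monotonicity of TASEP.

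The main obstacle I anticipate is the potentially unbounded out-degrees on a supercritical $\GW$ tree, which precludes a direct application of standard existence theorems such as \cite[Thm.~I.3.9]{L:interacting-particle} that require uniformly bounded influence per coordinate. This is bypassed by the strict decrease of the time horizon along the backward recursion, which together with the a.s.\ finiteness of each $\deg^+(z)$ forces the backward branching structure to terminate in finitely many steps for each fixed realization of $\bT$ and each fixed $t$.
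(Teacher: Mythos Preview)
The paper does not give its own proof of this proposition; it simply records it with a cross-reference to \cite[Prop.~A.1]{GS:SpeedGalton} and moves on. So there is nothing in the present paper to compare your argument against. Your overall route---a Harris graphical construction, a backward-influence-set argument, and then reading off the Markov and Feller properties---is the standard one and is almost certainly what the cited reference does.

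That said, your treatment of the unbounded-degree issue has a genuine gap. You write that ``the strict decrease of the time horizon along the backward recursion, together with the a.s.\ finiteness of each $\deg^+(z)$, forces the backward branching structure to terminate in finitely many steps.'' This does not follow: strict decrease of $s$ along each branch is compatible with an infinite recursion (the decrements could be summable), and local finiteness of the tree only tells you that each node of the backward tree has finitely many children, not that the tree itself is finite. Concretely, from $(z,s)$ you spawn on average $\leq C(1+\deg(z))s$ new queries, and on a Galton--Watson tree the factor $\deg(z)$ is not uniformly bounded, so you cannot simply take $s$ small to make the branching subcritical uniformly over the tree. A correct argument needs a quantitative input: either dominate the backward cluster by a subcritical branching process using that the degrees along any ray are i.i.d.\ with finite mean (the $\GW$ structure), or control the growth of the ball $B(x,n)$ in $\bT$ and show that for $t$ small enough the cluster does not escape $B(x,n)$ with probability going to $1$ as $n\to\infty$, then extend to all $t$ by the semigroup property. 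Your sketch gestures at the right construction but does not supply this estimate.
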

For a tree $\bT \in \mathcal{T}$, let $P_{\bT}$ denote the law of the TASEP on $\bT$. Furthermore, we set $$\P= \GW \times P_{\bT}$$ to be the semi-direct product where we first choose a tree $\bT\in \mathcal{T}$ according to $\GW$ and then perform the TASEP on $\bT$. 
For $x\in V$, let $\abs{x}$ denote the shortest path distance to the root.
We set
 \begin{equation}\label{def:generation}
\cZ_{\ell}:=\{ x \in V \colon \abs{x} = \ell\}
\end{equation}
and we will refer to $\cZ_{\ell}$ as the
 $\ell^{\text{th}}$ \textbf{generation} of the tree, for $\ell \in \N_0$. \\
 
 Throughout this article, we will consider the $d$-regular tree for some $d\geq 3$ with common rates per generation as an example. In this case, the offspring distribution $\mu$ is the Dirac measure on $d-1$ and we  let the rates $(r_{x,y})_{(x,y)\in \Er}$ satisfy
 \begin{equation}\label{eq:homo}
 r_{x,y} = r_{x^{\prime},y^{\prime}}
 \end{equation} for all $(x,y),(x^{\prime},y^{\prime}) \in \Er$ with $|x|=|x^{\prime}|$; see Figure \ref{fig:RegularTreeTASEP} for $d = 3$ and $r_{x,y}= 2^{-|x|-1}$. 
We say the rates are \textbf{homogeneous} on the $d$-regular tree if 
\begin{equation} \label{eq:RunningExample}
r_{x,y}=(d-1)^{-|x|-1}.
 \end{equation}

\begin{figure}[t]
\centering
\begin{tikzpicture}[scale=0.9]

  \node[shape=circle,scale=1.5,draw, line width=2pt] (A) at (0,-1){} ;
 	\node[shape=circle,scale=1.5,draw] (B) at (2,-1){} ;
 	\node[shape=circle,scale=1.5,draw] (-B) at (-2,-1){} ;
	
 	\node[shape=circle,scale=1.5,draw] (D) at (3.3,0){} ;
	\node[shape=circle,scale=1.5,draw] (-D) at (-3.3,0) {};
	\node[shape=circle,scale=1.5,draw] (E) at (3.3,-2){} ;
	\node[shape=circle,scale=1.5,draw] (-E) at (-3.3,-2) {};

		\node[scale=1.2] (test1) at (2.75,-1.2){$\frac{1}{4}$};
		\node[scale=1.2] (test2) at (2.55,-0.15){$\frac{1}{4}$};		
		
		\node[scale=1.2] (test3) at (-2.75,-1.2){$\frac{1}{4}$};
		\node[scale=1.2] (test4) at (-2.55,-0.15){$\frac{1}{4}$};		
		
		\node[scale=1.2] (test5) at (0.9,-0.58){$\frac{1}{2}$};
		\node[scale=1.2] (test6) at (-0.9,-0.58){$\frac{1}{2}$};

			\draw[thick,->] (A) to (-B);	
			\draw[thick,->] (A) to (B);	
		
			\draw[thick,->] (B) to (D);		
			\draw[thick,->] (-B) to (-D);		
			\draw[thick,->] (B) to (E);		
			\draw[thick,->] (-B) to (-E);

			\draw[thick,dashed] (E) to (4.3,-2);		
			\draw[thick,dashed] (-E) to (-4.3,-2);		
			\draw[thick,dashed] (E) to (3.3,-3);		
			\draw[thick,dashed] (-E) to (-3.3,-3);		
			
			\draw[thick,dashed] (D) to (4.3,0);		
			\draw[thick,dashed] (-D) to (-4.3,0);		
			\draw[thick,dashed] (D) to (3.3,1);		
			\draw[thick,dashed] (-D) to (-3.3,1);

			\node[shape=circle,scale=1.2,fill,red] (AC) at (E){} ;
			\node[shape=circle,scale=1.2,fill,red] (AC) at (-B){} ;

\draw[thick,->,line width=1.2pt] (0.5,0.5) to [bend right,in=220,out=-40] (A);	

			\node[scale=1.2] (name) at (0.2,-0.1){$\lambda$};

\end{tikzpicture}
\caption[The $3$-regular tree with a flow rule.]{\label{fig:RegularTreeTASEP}The $3$-regular (or binary)  tree satisfying a flow rule with $r_{j}^{\min} =  r_{j}^{\max}=2^{-j-1}$.}	\end{figure}
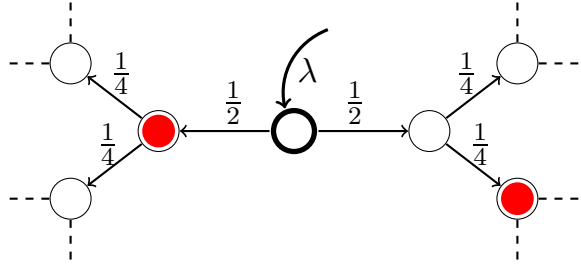

\subsubsection{Conditions on rates}  In the following, let the rates be bounded uniformly from above for $\GW$-almost every Galton--Watson tree, and let the tree be initially empty. We start with an upper bound on the first generation at which the first $n$ particles are located in different branches of the tree, and hence behave like independent random walks. Throughout this section, we will impose the following  two conditions on the transition rates.  Our first assumption on $(r_{x,y})$ is a non-degeneracy condition, which ensures that the particle system can in principle explore the whole tree.

\begin{assumption}[Uniform Ellipticity (UE)]\label{def:UniformElliptic}  
The transition rates on $\bT$ are \textbf{uniformly elliptic}, i.e.\ there exists an $\varepsilon \in (0, 1]$ such that
\begin{equation*}
\inf \left\{ \frac{r_{x,y}}{r_{x,z}} \colon (x,y),(x,z) \in \Er \right\}\geq \varepsilon \, .
\end{equation*}
\end{assumption}
Note that \hyperref[def:UniformElliptic]{\normalfont(UE)}  guarantees that the first $n$ particles will eventually move on different subtrees of $\bT$ and behave as independent random walks after a certain generation; see Proposition \ref{lem:Early-sep}. 
 To state our next assumption, we define
 \begin{align}\label{def:minandmax}
	r^{\min}_\ell &:= \min\{r_{x,y} \colon x \in \cZ_\ell,  y \in \cZ_{\ell+1}, (x,y)\in \Er\} \nonumber \\
	r^{\max}_\ell &:= \max\{r_{x,y} \colon x \in \cZ_\ell,  y \in \cZ_{\ell+1}, (x,y)\in \Er\} 
\end{align} to be the minimal and maximal transition rates in generation $\ell$ for all $\ell \in \N_0$. The following
assumption guarantees that the rates are not decaying too fast, which may cause certain branches of the tree to  become blocked for the particles.
\begin{assumption}[Exponential decay (ED)]\label{def:ExponentialScaling} 
The transition rates \textbf{decay at most 
exponentially} fast, i.e.\ 
there exist constants $c_{\textup{low}},\kappa >0 $ such that for all $\ell \geq 0$
\begin{equation*}
 r_{\ell}^{\min} \geq \kappa \exp(-c_{\textup{low}} \ell) \, .
\end{equation*} 
\end{assumption}

At this point we want already to keep four quintessential examples in mind, that we will use to highlight the results and to show different regimes of behaviour. They are all on the $d$-regular tree which can be viewed as a Galton-Watson tree with $\mu \sim \delta_{d-1}$ and the rates are equal across their generation, making the tree endowed with the rates a spherically symmetric object. 

\begin{example}[Uniform ellipticity and Exponential decay in four examples] Consider the following four archetypes of rates on the $d$-regular tree for some $d \geq 3$:
	\begin{enumerate}
		\item[(C)] ({\bf Constant rates})  $r_{x,y} = 1$.  Assumption  \hyperref[def:UniformElliptic]{\normalfont(UE)} is satisfied with $\e = 1$ and  \hyperref[def:ExponentialScaling]{\normalfont(ED)} is satisfied with $\kappa = 1$ and any $c_{\textup{low}} >0$.
		\item[(E)] ({\bf Exponentially decaying, homogeneous, rates}) $r_{x,y} = r_{|x|} = (d-1)^{-|x|-1}$. Assumption  \hyperref[def:UniformElliptic]{\normalfont(UE)} is satisfied with $\e = 1$ and  \hyperref[def:ExponentialScaling]{\normalfont(ED)} is satisfied with $\kappa = (d-1)^{-1}$ and $c_{\textup{low}} = \log(d-1)$. 
		\item[(S)] ({\bf Slow rates}) $r_{x,y}  = (d-1)^{-|x|-1}g(|x|)$ where $g(s) \to 0 $ as $s \to \infty$ at most exponentially fast.  Assumption  \hyperref[def:UniformElliptic]{\normalfont(UE)} is satisfied with $\e = 1$ and  \hyperref[def:ExponentialScaling]{\normalfont(ED)} is satisfied for $\kappa>0$ and $c_{\textup{low}}>0$ sufficiently small.
		\item[(P)] ({\bf Polynomially decaying rates of power $p$}) $r_{x,y} = (|x|+1)^{-p}$ where $p>0$.  Assumption  \hyperref[def:UniformElliptic]{\normalfont(UE)} is satisfied with $\e = 1$ and  \hyperref[def:ExponentialScaling]{\normalfont(ED)} is satisfied with $\kappa = 1$ and $c_{\textup{low}} =  p \max_{|x|} \frac{\log(|x| +1)}{|x|} = p\log2$.
	\end{enumerate}
\end{example}

Equipped with these two assumptions, we will now introduce some notation to state our main results. In the following, we let
\begin{equation}\label{def:MinimalDegreeConditionedOffspring}
d_{\min}:=\min\{i \colon p_i >0\} \qquad \tilde{\mathfrak m}: = \left(\sum_{k=2}^\infty p_k\right)^{-1}\sum_{k=2}^\infty k p_k
\end{equation}
be the minimal number of offspring and the mean number of offspring when conditioning on having at least two offspring, respectively. Let 
\begin{equation}\label{def:cmu}
c_{o} := \begin{cases} (5+\log_2\tilde{\mathfrak m})(\log(1+p_1)-\log({2p_1}))^{-1} &\text{ if } d_{\min}=1 \, , \\
1/\log d_{\min} &\text{ if } d_{\min}>1 \, ,
\end{cases}
\end{equation}
and define the integer function
\begin{equation} \label{def:FurthestGeneration}
\sD_n := \inf\left\{ m \in \N \colon  r^{\max}_\ell  \leq n^{-(2 +c_{\textup{low}}c_{o})}\log^{-3}n \text{ for all } \ell\geq m \right\} 
\end{equation} for all $n\in \N$, where we use the convention $\inf\{\emptyset\}=\infty$.  In words, $(\sD_n)_{n \in \N}$ denotes a sequence of generations along which all rates decay at least polynomially fast. The order of the underlying polynomial depends on the structure of the tree.  In particular, for exponentially fast decaying rates, $\sD_n$ will be of order $\log n$. We are now ready to quantify the generation where decoupling of the first $n$ particles is guaranteed.

\begin{theorem}[The disentanglement theorem]\label{thm:DisentanglementGWT}
Consider the TASEP on a Galton--Watson tree and assume that the transition rates satisfy assumptions \hyperref[def:UniformElliptic]{\normalfont(UE)} and \hyperref[def:ExponentialScaling]{\normalfont(ED)}. Recall $\e \in (0,1]$ from \hyperref[def:UniformElliptic]{\normalfont(UE)}. Let $\delta >0$ be arbitrary, but fixed, and define $\sM_n$ for all $n\in \N$ as follows.
\begin{enumerate}
\item\label{eq:LimSupFinite} When  $\displaystyle \limsup_{n \to \infty} \frac{\sD_n}{\log n} < \infty $ holds, set
\be \label{def:DecouplingPoint}
\sM_n := 
\begin{cases}\vspace{0.3cm}
(c_{o}+1)\sD_n +  c_o (2+\delta) \log_{1+\e} n,  & \textrm{if } d_{\min} =1 \, ,\\
\frac{d_{\min}}{d_{\min}-1}\sD_n + (2+\delta) \log_{1+\e} n, & \textrm{if } d_{\min} > 1 \, .
\end{cases}
\ee
\item\label{eq:LimSupFinite2} When  $\displaystyle \liminf_{n \to \infty} \frac{\sD_n}{\log n} = +\infty $ holds, set
\be \label{def:DecouplingPoint2}
\sM_n := \left(  c_{o}\mathds{1}_{\{ d_{\min} =1\}} + \frac{1}{d_{\min}-1}\mathds{1}_{\{ d_{\min} >1\}} +  (1+\delta)\right) \min\{\sD_n,n\}\, .
\ee
\end{enumerate}
Then $\P$-almost surely, the trajectories of the first $n$ particles  decouple after generation $\sM_n$ for $n$ large enough, i.e.\ the first $n$ particles visit distinct sites at level $\sM_n$.
\end{theorem} 
\begin{remark} If in Theorem \ref{thm:DisentanglementGWT} neither \eqref{eq:LimSupFinite} or \eqref{eq:LimSupFinite2}  is satisfied, one could either pass to subsequences which satisfy \eqref{eq:LimSupFinite} or \eqref{eq:LimSupFinite2}, or instead apply Proposition \ref{lem:Early-sep} from Section \ref{sec:Disentangelment} which will give a coarse bound of order $n$ on the generation $\mathcal{M}_n$.
\end{remark}

\begin{example}[Disentanglement generations] \label{ex:intro} For the four examples on the $d$-regular tree, $\mu(d-1) = 1$, $d \ge 3$ we have $d_{\min} = d-1, c_o = 1/\log(d-1)$ and for $\delta>0$ arbitrarily small 
	\begin{enumerate}
		\item[(C)] ({\bf Constant rates})  $r_{x,y} = 1$.  Here $\sD_n=+\infty$ and so by \eqref{def:DecouplingPoint2}
			\[ \sM_n =\Big( \frac{1}{d-2} +  1+\delta\Big) n \,. \]
		\item[(E)] ({\bf Exponentially decaying, homogeneous, rates}) $\sD_n$ is of logarithmic order, so 
		\begin{equation}\label{eq:sMnExample}
			\sM_n = \left(\frac{3(d-1)}{(d-2)\log(d-1)} + \frac{2+\delta}{\log 2}\right)\log n \, .
		\end{equation}		
		\item[(S)] ({\bf Slow rates}) $r_{x,y} = (d-1)^{-|x|-1}g(|x|)$ where $g(s) \to 0 $ as $s \to \infty$ at most exponentially fast.  Because of $g$, we cannot solve for $\sD_n$ explicitly, but $\sD_n$ is still of logarithmic order as above.  If for example  $g(s) = s^{-p}$ for some $p>0$,  we may bound for $n$ large enough
		\[
		\sM_n \le \left(\frac{3(d-1)}{(d-2)(\log(d-1))} + \frac{2+\delta}{\log 2}\right)\log n \, . 
		\]
		\item[(P)] ({\bf Polynomially decaying rates of power $p$}) $r_{x,y} = (|x|+1)^{-p}$ where $p>0$.  $\sD_n$ is polynomial in order, so for $c_{\textup{low}}>0$ arbitrarily small
		\begin{equation*}
			\sM_n =  \frac{d-1+\delta}{d-2}\min( \left(n^{2+c_oc_{\textup{low}}}\log^{3} n \right)^{\frac{1}{p}} ,n)\, .
		\end{equation*} 
	\end{enumerate}
\end{example}

\begin{remark} Using the pigeonhole principle, we see that for $\GW$-almost every tree and any family of rates, the first generation of decoupling of $n$ particles will be at least of order $\log n$. When the rates decay exponentially fast, the disentanglement theorem ensures that order $\log n$ generations are sufficient to decouple $n$ particles. In particular, in this case the bounds in Theorem \ref{thm:DisentanglementGWT} are sharp up to constant factors.
\end{remark}
\begin{remark} A similar result on the disentanglement of the particles holds when we replace the reservoir by any dynamic that generates almost surely a number of particles which grows linearly in time. This may for example be a TASEP on a half-line attached to the root and started from a Bernoulli-$\rho$-product measure for some $\rho \in (0,1)$.
\end{remark}

\subsubsection{Currents}  
Using Theorem \ref{thm:DisentanglementGWT}, we now study the current for the TASEP on Galton--Watson trees. For any pair of sites $x,y \in V$, we say that $y$ is below $x$ (and write $x \leq y$) if there exists a directed path in $\bT$ connecting $x$ to $y$. Let the starting configuration $\eta_0$ be either the empty configuration --- as we will mostly assume in the following --- or contain finitely many particles. Then we define the \textbf{current} $(\cJ_x(t))_{t \geq 0}$ across $ x \in V$ by
\begin{equation}\label{def:CurrentPointwise}
\cJ_x(t) := \sum_{y \colon x\leq y} \eta_t(y) - \sum_{y \colon x\leq y}  \eta_0(y) = \sum_{y \colon x\leq y}  (\eta_t(y)-\eta_0(y))
\end{equation} for all $t \geq 0$. Similarly, we define the \textbf{aggregated current} $(\cJ_\ell(t))_{t \geq 0}$ at generation $\ell$ by
\begin{equation}\label{def:CurrentAggregated}
\cJ_\ell(t) :=  \sum_{x \in \cZ_\ell} \cJ_x(t)
\end{equation} for all $\ell \in \N_0$ and $t \geq 0$.
The current (aggregated current) denotes the number of particles that have reached site $x$ (generation $\ell$) by time $t$. Our goal is to prove almost sure bounds for the aggregated current. This can be achieved in two different ways. On the one hand, we consider a given generation $\ell$ and study the time until $n$ particles have passed through $\ell$. On the other hand, we fix a time horizon $T$ and study how many particles have passed through a given generation until time $T$.  For $x \in V$, we denote by
\begin{equation}\label{r_xdef}
r_x:=\sum_{y \colon (x,y) \in \Er}r_{x,y}
\end{equation} the sum of outgoing rates at site $x$. For $m \geq \ell \geq 0$, we define
\begin{equation}\label{def:R_xdInverse}
R^{\min}_{\ell,m} := \sum_{i=\ell}^{m} \big(\min_{x \in \cZ_i}  r_{x} \big)^{-1} ,  \qquad  R^{\max}_{\ell,m} := \sum_{i=\ell}^{m} \big(\max_{x \in \cZ_i}  r_{x} \big)^{-1} 
\end{equation} and set $R^{\min}_{\ell}:=R^{\min}_{\ell,\ell}$ as well as $R^{\max}_{\ell}:=R^{\max}_{\ell,\ell}$. 
Intuitively, $R^{\min}_{\ell,m}$ and $R^{\max}_{\ell,m}$ are the expected waiting times to pass from generation $\ell$ to $m+1$ when choosing the slowest, respectively the fastest, rate in every generation. 
In the following, we state our results on the current in Theorems \ref{thm:LinearCurrentLSPECIAL} and \ref{thm:CurrentfixedTSPECIAL} only for exponentially decaying rates, i.e.\ if there exists some $c_{\textup{up}}>0$ such that 
\begin{equation}\label{def:ExpDecNew}
R^{\max}_{\ell} \geq \exp(c_{\textup{up}}\ell)
\end{equation}  holds for all $\ell \in \N$. 

We provide more general statements in Section \ref{sec:CurrentTheorems} and \ref{sec:RegularTreeTASEP} from which Theorems~\ref{thm:LinearCurrentLSPECIAL} and~\ref{thm:CurrentfixedTSPECIAL} as well as Examples~\ref{ex:Example3} and~\ref{pro:RunningExample} follow. Fix now some integer sequence $(\ell_n)_{n \in \N}$ with $\ell_n \geq \sM_n$ for all $n \in \N$, where $\sM_n$ is taken from Theorem \ref{thm:DisentanglementGWT}. For every $n\in \N$, we define a time window $[t_{\textup{low}},t_{\textup up}]$ in which we study the current through the $\ell_n^{\textup{th}}$ level of the tree, and where we see a number of particles proportional to $n$ passing~through~$\mathcal{Z}_{\ell_n}$.
\begin{theorem}[Time window for positive current under \eqref{def:ExpDecNew}]
\label{thm:LinearCurrentLSPECIAL} 
Suppose that  \hyperref[def:UniformElliptic]{\normalfont(UE)} and \hyperref[def:ExponentialScaling]{\normalfont(ED)} hold, and that the rates satisfy \eqref{def:ExpDecNew}.
 Then for any $\delta \in(0,1)$, there exist some $c>0$ such that for all choices of $t_{\textup{low}}=t_{\textup{low}}(n)$ and $t_{\textup{up}}=t_{\textup{up}}(n)$ with
\begin{equation}\label{eq:StatementLinearSPECIAL}
t_{\textup{up}} \geq c (n R^{\min}_{\sM_n}+ R^{\min}_{\ell_n})  \, ,  \qquad t_{\textup{low}} \leq \exp\Big(\frac{1}{2}c_{\textup{up}} \ell_n\Big) \,  ,
\end{equation} for $n \in \N$, we see that  $\P$-almost surely
\begin{equation}\label{eq:StatementCurrentLinearSPECIAL}
\lim_{n\to \infty} \cJ_{\ell_n}(t_{\textup{low}}) =0\, , \qquad  \liminf_{n\to \infty}\frac{1}{n}  \cJ_{\ell_n}(t_{\textup{up}}) \ge 1- \delta   \, .
\end{equation}
\end{theorem}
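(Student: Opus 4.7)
The plan is to establish the two claims in \eqref{eq:StatementCurrentLinearSPECIAL} separately, exploiting that under \eqref{def:ExpDecNew} the waiting time at the deep generations is at least of order $\exp(c_{\textup{up}}\ell_n)$, so that $\exp(\tfrac{1}{2}c_{\textup{up}}\ell_n)$ is the natural short time scale before any particle can plausibly reach $\cZ_{\ell_n}$. On the other hand, Theorem~\ref{thm:DisentanglementGWT} tells us that from generation $\sM_n$ onward the first $n$ particles evolve as $n$ independent walks on disjoint subtrees, and this makes $n R^{\min}_{\sM_n} + R^{\min}_{\ell_n}$ the natural time by which a proportion $(1-\delta)$ of them has arrived at $\cZ_{\ell_n}$: the first summand is a queueing delay at the disentanglement bottleneck, the second dominates the post-disentanglement descent because of exponential growth of $R^{\min}_\ell$.

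For $\cJ_{\ell_n}(t_{\textup{low}})\to 0$, I would couple the TASEP to a dominating system in which exclusion is dropped and particles descend as independent random walks with the given rates. The current is then stochastically dominated by the number of such walkers that have reached $\cZ_{\ell_n}$ by $t_{\textup{low}}$. A Poisson tail bound controls the number of walkers produced by $t_{\textup{low}}$, and for each of them the passage time to $\cZ_{\ell_n}$ is a sum of independent exponentials whose rates at level $i$ are bounded by $(R^{\max}_i)^{-1}\leq \exp(-c_{\textup{up}}i)$. A Chernoff estimate on this sum, optimized near $\theta\asymp \exp(-\tfrac{1}{2}c_{\textup{up}}\ell_n)$, makes the probability that a single walker reaches $\cZ_{\ell_n}$ within time $\exp(\tfrac{1}{2}c_{\textup{up}}\ell_n)$ super-polynomially small in $\ell_n$. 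A union bound across the $O(t_{\textup{low}})$ walkers drives the expected current to zero, and a Borel--Cantelli step along a suitable subsequence upgrades to the $\P$-almost sure statement.

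For $\liminf_n \cJ_{\ell_n}(t_{\textup{up}})/n \geq 1-\delta$ the starting point is Theorem~\ref{thm:DisentanglementGWT}: once the first $n$ particles have reached $\cZ_{\sM_n}$, they occupy distinct vertices and thereafter move as $n$ independent walks on disjoint subtrees. Two estimates then combine. First, I would control the time by which all $n$ particles have crossed $\cZ_{\sM_n}$ by dominating the pre-disentanglement dynamics by a tandem queueing system and exploiting \hyperref[def:UniformElliptic]{\normalfont(UE)} to prevent any single subtree from being overloaded; this gives an exit time of order $n R^{\min}_{\sM_n}$ with high probability. Second, for each independent walker the residual descent from $\cZ_{\sM_n}$ to $\cZ_{\ell_n}$ is a sum of independent exponentials with total mean at most $R^{\min}_{\sM_n,\ell_n}$, which under \eqref{def:ExpDecNew} is dominated by $R^{\min}_{\ell_n}$; Bernstein's inequality yields concentration around the mean and, via an order-statistic argument across the $n$ walkers, guarantees that at least $(1-\delta)n$ have arrived by $t_{\textup{up}}\sim n R^{\min}_{\sM_n}+R^{\min}_{\ell_n}$ with probability $1-o(1)$. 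A final Borel--Cantelli step produces the $\P$-almost sure conclusion.

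The main obstacle is the queueing control for the pre-disentanglement region, since up to generation $\sM_n$ the $n$ particles may still block each other in complicated ways, and one needs to show that the delay they incur does not inflate the time budget beyond $n R^{\min}_{\sM_n}$. The strategy I would pursue is to dominate this region by a tandem of $M/M/1$ queues driven by the local rates $(r_x)$, using \hyperref[def:UniformElliptic]{\normalfont(UE)} to maintain sufficient branching for particles to spread across children, and propagating the reservoir rate through the bottleneck via Burke-type arguments analogous to those used in the one-dimensional TASEP literature (see e.g.\ \cite{balazs2006cube}). Once this queueing estimate is in hand, the rest of the proof reduces to routine concentration of independent exponential sums combined with the disentanglement theorem.
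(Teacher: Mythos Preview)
Your overall architecture matches the paper's: the bound for $t_{\textup{low}}$ via a coupling to independent (non-interacting) random walks plus a Chernoff estimate on the exponential sum is exactly what the paper does (Lemma~\ref{lem:CouplingToIRW} and Lemma~\ref{lem:RWcomparision}), and for $t_{\textup{up}}$ the decomposition into a pre-disentanglement ``queueing'' phase followed by $n$ independent descents after generation $\sM_n$ is also the paper's structure.

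The one place where your plan diverges is the tool for the queueing control. You propose to bound the time for all $n$ particles to clear $\cZ_{\sM_n}$ via a tandem of $M/M/1$ queues and Burke-type arguments. Burke's theorem is a stationary distributional statement; it does not by itself yield the finite-time large-deviation bound you need here, namely that $\tau^{n}_{\sM_n}\le c\,n\,R^{\min}_{\sM_n}$ with probability $1-e^{-cn}$. The paper instead constructs a \emph{slowed down} TASEP in which particle $i$ at level $\ell$ waits an $\mathrm{Exp}(r^{\min}_\ell)$ time and only moves after particle $i-1$ has vacated level $\ell+1$; this is exactly a last passage time in an inhomogeneous corner-growth environment (Lemma~\ref{lem:ExpoLPP}), which is then dominated by a homogeneous exponential LPP and bounded via the known large-deviation estimate for $G^{(1)}_{M,M}$ from \cite{S:CouplingInterfaces} (Lemma~\ref{lem:ExpoEstimate}). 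This replaces your Burke step with a one-line stochastic domination plus an off-the-shelf LPP tail bound, and it is what makes the pre-disentanglement control go through cleanly. Once that is in place, your ``order-statistic'' argument for the post-disentanglement phase is exactly the paper's Bernoulli/Chebyshev step after~\eqref{eq:DecouplingT2}.
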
 
\begin{remark} Note that under assumption \eqref{def:ExpDecNew}, we have the bound 
\begin{equation}
c (n R^{\min}_{\sM_n}+ R^{\min}_{\ell_n}) \leq  n^{\tilde{c}c_{\textup{low}}}+ n \exp(c_{\textup{low}}\ell_n) 
\end{equation} for some $\tilde{c}>0$. This upper bound can be used as a simple potential value for $t_{\textup{up}}$ in \eqref{eq:StatementLinearSPECIAL}.
\end{remark}

\begin{example}[Exponentially decaying, homogeneous rates (E)] \label{ex:Example3}On the $d$-regular tree for $d\geq 3$ with homogeneous rates from \eqref{eq:RunningExample}, let $\ell_n=\sM_n=c_d\log n$ with $\sM_n$ from \eqref{eq:sMnExample} and  $c_d>0$. Then there exists a constant  $c>0$ such that \eqref{eq:StatementCurrentLinearSPECIAL} holds when we set
\begin{equation}
t_{\textup{up}} = \frac{c}{d-1}n^{ \log(d-1)c_d+1} \, ,  \qquad t_{\textup{low}} = n^{ \frac{1}{2}\log(d-1)c_d} \, .
\end{equation} 
\end{example}
Note that the precision of the bounds on the current strongly depend on the transition rates and the structure of the tree. 
See Examples \ref{pro:PolyLinearCurrentFixedSPECIAL}-\ref{last} for polynomially decaying rates in Section \ref{sec:RegularTreeTASEP}. 
Theorem \ref{thm:LinearCurrentLSPECIAL} will be deduced from the more general Theorem~\ref{thm:LinearCurrentL}, while Example~\ref{ex:Example3} follows directly from Theorem~\ref{thm:LinearCurrentLSPECIAL} and Example \ref{ex:intro}.


Next, we let $t$ be a fixed time horizon and define an interval $[L_{\textup{low}}, L_{\textup{up}} ]$ of generations. Recall the generation $\sM_{n}$ from Theorem~\ref{thm:DisentanglementGWT} for the first $n$ particles and define 
\be \label{eq:goodC}
n_{t} := \sup\bigg\{ n \in \N_0: (n+\sM_n)(\min_{|x| \leq \sM_n}r_x)^{-1} \le  t\bigg\} \,  .
\ee
Note that for exponentially decaying rates, the quantity $n_t$ will be a polynomial in $t$. For large times $t$, the next theorem gives a window of generations where we expect to see the first $n_t$  particles which entered the tree.
\begin{theorem}[Generation window for positive current under \eqref{def:ExpDecNew}]
\label{thm:CurrentfixedTSPECIAL} Suppose that  \hyperref[def:UniformElliptic]{\normalfont(UE)} and \hyperref[def:ExponentialScaling]{\normalfont(ED)} holds, and the rates satisfy \eqref{def:ExpDecNew}.  Then there exists a constant $c>0$ such that for all  $L_{\textup{low}}=L_{\textup{low}}(t)$ and $L_{\textup{up}}=L_{\textup{up}}(t)$  with
\begin{equation}
L_{\textup{low}} \leq c \log t \, , \qquad L_{\textup{up}} \geq \frac{2}{c_{\textup{up}}} \log t \, ,
\end{equation} for $t\geq 0$, we see that  $\P$-almost surely
\begin{equation}
\lim_{t\to \infty} \cJ_{L_{\textup{up}}}(t) =0  \,  , \qquad  \liminf_{t\to \infty}\frac{1}{n_{t}} \cJ_{L_{\textup{low}}}(t)  > 0 \, .
\end{equation}
\end{theorem}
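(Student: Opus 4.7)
The plan is to establish the two claims separately: the first via a first-moment argument on a free-particle domination, the second via the disentanglement theorem and an independence-plus-LLN step.

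\emph{Upper bound $(\cJ_{L_{\textup{up}}}(t)\to 0)$.} The idea is a first-moment estimate obtained by dominating the TASEP by a \emph{free} particle system, in which the reservoir injects at rate $\lambda$ irrespective of the root's state and each particle performs an independent continuous-time random walk on the tree with the original rates $(r_{x,y})$. A standard graphical coupling shows that the count of TASEP particles having crossed $\cZ_{L_{\textup{up}}}$ by time $t$ is stochastically dominated by its free-system analogue. For a single free particle starting at the root, the time to reach generation $L := L_{\textup{up}}$ along any fixed path visiting sites $x_0,\dots,x_{L-1}$ is a sum $\sum_{\ell=0}^{L-1}T_\ell$ of independent exponentials $T_\ell\sim \mathrm{Exp}(r_{x_\ell})$ with $r_{x_\ell}\le (R^{\max}_\ell)^{-1}\le \exp(-c_{\textup{up}}\ell)$ by~\eqref{def:ExpDecNew}. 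Using the elementary bound $\P(T_\ell\le t)\le tr_{x_\ell}$ on the three waiting times at generations $L-1,L-2,L-3$, together with their independence, gives
\[
\P\Big(\sum_{\ell=0}^{L-1}T_\ell\le t\Big)\le \prod_{i=1}^{3}\P(T_{L-i}\le t)\le t^{3}\exp\Big(-c_{\textup{up}}\sum_{i=1}^{3}(L-i)\Big).
\]
For $L$ of order $\tfrac{2}{c_{\textup{up}}}\log t$ this is $O(t^{-3})$, and combined with a Poisson tail bound on the number of particles entered by time $t$, the expected current at $\cZ_{L_{\textup{up}}}$ is $O(t^{-2})$. Markov and Borel--Cantelli applied to integer times, together with monotonicity of $\cJ_L(\cdot)$ in time, then yield the almost-sure convergence to $0$.

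\emph{Lower bound $(\liminf\cJ_{L_{\textup{low}}}(t)/n_t>0)$.} Choose the constant $c$ in the hypothesis so that $L_{\textup{low}}\ge \sM_{n_t}$ eventually; this is possible because under \eqref{def:ExpDecNew} and \hyperref[def:ExponentialScaling]{\normalfont(ED)} one has $\sM_n=O(\log n)$ and $n_t$ polynomial in $t$. Apply Theorem~\ref{thm:DisentanglementGWT} to the first $n:=\lfloor n_{t/2}\rfloor$ particles: starting from generation $\sM_n$ they occupy distinct sites and thereafter evolve on $n$ disjoint sub-trees as independent continuous-time random walks (later-entering particles can only reduce the current, so I drop them for a lower bound). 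The definition~\eqref{eq:goodC} of $n_t$ budgets exactly the time for these $n$ particles to enter the tree and traverse to $\sM_n$: a tail bound on a sum of $n+\sM_n$ independent $\mathrm{Exp}(\min_{|x|\le \sM_n}r_x)$ variables shows that by time $t/2$, all $n$ particles have reached $\cZ_{\sM_n}$ with probability $1-o(1)$. In the remaining time $t/2$, each of the $n$ decoupled walks traverses from $\sM_n$ to $L_{\textup{low}}$ in expected time $R^{\min}_{\sM_n,L_{\textup{low}}-1}$, which by \hyperref[def:ExponentialScaling]{\normalfont(ED)} is polynomial in $t$ and hence much smaller than $t/2$; consequently each walk reaches $\cZ_{L_{\textup{low}}}$ with probability at least some $p>0$ bounded away from $0$. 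A Chernoff bound on the $n$ independent $\mathrm{Bernoulli}(p)$ indicators forces $\cJ_{L_{\textup{low}}}(t)\ge (p/2)n$ with probability tending to $1$, and Borel--Cantelli along a geometric subsequence in $t$ combined with monotonicity produces the claimed a.s.\ liminf.

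\emph{Main obstacle.} The subtle step is the first phase of the lower bound: quantifying by what time the first $n$ particles have all reached distinct sites of $\cZ_{\sM_n}$. Near the root, particles block each other and the effective drift is governed by the slowest rate $\min_{|x|\le \sM_n}r_x$, precisely what \eqref{eq:goodC} captures; promoting this to a sharp \emph{almost-sure} (rather than merely first-moment) statement requires reusing the estimates behind Proposition~\ref{lem:Early-sep} together with a Poissonian tail bound for the injection time of the $n$-th particle. Once both phases are controlled, the coupling to independent walks, the Chernoff step, and the Borel--Cantelli interpolation are all routine.
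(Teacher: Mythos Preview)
Your upper-bound argument is essentially the paper's: domination by independent random walks (Section~\ref{sec:IndependentRWs}) plus a first-moment bound. Your cruder estimate using only the last three waiting times works in the exponential setting and replaces the paper's appeal to Lemma~\ref{lem:RWcomparision}.

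The lower bound has a genuine gap. You assert that the time for the first $n$ particles to all reach generation $\sM_n$ is controlled by ``a tail bound on a sum of $n+\sM_n$ independent $\mathrm{Exp}(\min_{|x|\le \sM_n}r_x)$ variables''. This is not true: because of exclusion, particle $i$ must wait for particle $i-1$ to vacate each site, and the time $\tau_{\sM_n}^{n}$ is governed by a last-passage time, not a single sum. Concretely, Lemma~\ref{lem:ExpoLPP} gives $\tau_{\sM_n}^{n}\le G^{\tilde\omega}_{n+\sM_n,n}(A_{\sM_n})$, and Lemma~\ref{lem:ExpoEstimate} shows this is, with high probability, at most $\approx 4(n+\sM_n)/\min_{|x|\le \sM_n}r_x$, \emph{not} $(n+\sM_n)/\min r_x$. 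With your choice $n=n_{t/2}$ the sum you write has mean $t/2$, but the correct LPP bound is $\approx 2t$, so a budget of $t/2$ is insufficient and the argument does not close. The tools you name for resolving this ``main obstacle''---Proposition~\ref{lem:Early-sep} (which bounds the \emph{probability} of disentanglement, not the time) and a Poissonian tail bound on the $n$-th injection---do not address the exclusion-induced delay inside the tree.

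The paper's route is more direct: it proves the general Theorem~\ref{thm:CurrentfixedT} by invoking the LPP comparison (Lemmas~\ref{lem:ExpoLPP} and~\ref{lem:ExpoEstimate}) straight away to bound $P_{\bT}(\cJ_{L_{\textup{low}}}(5t)<n_t)\le e^{-cn_t}$, and then deduces Theorem~\ref{thm:CurrentfixedTSPECIAL} using $n_{5t}\le c\,n_t$ in the exponential case. If you replace your sum-of-exponentials heuristic by the LPP bound (and adjust the budget, e.g.\ work with $n_{t/10}$), your two-phase scheme becomes essentially the proof of Theorem~\ref{thm:LinearCurrentL} transported to this setting, and it then goes through; but as written the first phase fails.
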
 
We will obtain Theorem \ref{thm:CurrentfixedTSPECIAL} as a special case of Theorem \ref{thm:CurrentfixedT} in Section \ref{sec:CurrentTheorems}.
For the $d$-regular tree with homogeneous rates, we have the following current estimate in a sharp window of generations; see Section \ref{sec:RegularTASEPexpo} for the proof.
\begin{example}[Exponentially decaying, homogeneous rates (E)]\label{pro:RunningExample}  On the $d$-regular tree for $d\geq 3$ with  rates from \eqref{eq:RunningExample}, we have that $\P$-almost surely, for every $\alpha=\alpha(t)$ going to $0$ with $t \rightarrow\infty$
\begin{equation*}
\lim_{t \rightarrow \infty}  J_{L_{\textup{up}}}(t) =0 \,  , \qquad  \lim_{t \rightarrow \infty} t^{-\alpha} J_{L_{\textup{low}}}(t) = \infty \, ,
\end{equation*}  where we can choose $L_{\textup{low}}$ and $L_{\textup{up}}$ to be of the form
\begin{equation*}
L_{\textup{up}} = \frac{1}{\log(d-1)}\log t + o(\log t) \,  , \qquad   L_{\textup{low}} = \frac{1}{\log(d-1)}\log t - o(\log t)      \,  .
\end{equation*} 
\end{example}

\subsubsection{Large time behaviour}  We study the law of the TASEP on trees for large times.  
Again, we let the TASEP start from the all empty initial configuration according to 
$\nu_0$, where for $\rho \in [0,1]$, $\nu_{\rho}$ denotes the Bernoulli-$\rho$-product measure on $\{0, 1\}^V$. 
In contrast to the previous results, the geometry of the tree does not play an important role. However, we need assumptions on the transition rates. We assume that the rates are bounded uniformly from above. For $x \in V(T)$ recall $r_x$ from \eqref{r_xdef}. Let the \textbf{net flow} $q(x)$ through $x$ be 
\begin{equation}\label{def:LocalFlow}
q(x) := 
\begin{cases}  \vspace{0.3cm}
r_x - r_{\bar x, x} & x\neq o\, \, \\
r_o & x = o\, ,
\end{cases}
\end{equation}
where $\bar x$ is the unique parent of $x$. The rates satisfy a \textbf{superflow rule} if $q(x)\geq 0$ holds for all $x \in V(T) \setminus \{ o\}$. In particular, when $q(x)=0$ for all $x \neq o$ we say that the rates satisfy a \textbf{flow rule} with a flow of \textbf{strength} $q(o)$; see Figure \ref{fig:RegularTreeTASEP} for an example of flow with homogeneous rates on the binary tree.
The rates satisfy a \textbf{subflow rule} if
\begin{equation}\label{def:GlobalSubflow}
\lim_{m \rightarrow \infty}\sum_{x \in \mathcal{Z}_m} r_{x} = 0\, .
\end{equation}

\begin{example}[Flow conditions] For the four examples on the $d$-regular tree, $\mu(d-1) = 1$, $d \ge 3$ we have 	\begin{enumerate}
		\item[(C)] ({\bf Constant rates})  $r_{x,y} = 1$.  This is a superflow rule but not a flow rule.
		\item[(E)] ({\bf Exponentially decaying, homogeneous, rates})  $r_{x,y} = (d-1)^{-|x|-1}$. The rates here satisfy a flow rule.
		\item[(S)] ({\bf Slow rates}) $r_{x,y} = (d-1)^{-|x|-1}g(|x|)$ where $g(s) \to 0 $ as $s \to \infty$.  The condition on $g$ gives that the rates satisfy a subflow rule. 
		\item[(P)] ({\bf Polynomially decaying rates of power $p$}) $r_{x,y}  = (|x|+1)^{-p}$ where $p>0$. The rates satisfy a superflow rule if $\log 2 \le p^{-1}\log(d-1)$.
		\end{enumerate}
\end{example}

We endow the probability measures on $\{ 0,1 \}^{V}$ with the topology of weak convergence. 

\begin{theorem}[Fan and shock behaviour]
\label{thm:ConvergenceFlow} Let $(S_t)_{t \geq 0}$ be the semi-group of the TASEP $(\eta_t)_{t \geq 0}$ on a fixed tree $\bT \in \mathcal{T}$, where particles are generated at the root at rate $\lambda > 0$. We assume that  $\bT$ is infinite and without leaves. For all choices of $(r_{x,y})$, there exists a stationary measure $\pi_\lambda$ of $(\eta_t)_{t \geq 0}$ with
\begin{equation}\label{eq:ConvergenceFromAllEmptyTheorem}
\lim_{t \rightarrow \infty}\nu_{0}S_t = \pi_\lambda\, .
\end{equation}
Then 
under a superflow rule, $\pi_\lambda \neq \nu_1$ and the current $(\cJ_{o}(t))_{t \geq 0}$ through the root is $P_{\bT}$-almost surely linear in $t$,
\[
\varliminf_{t\to \infty}  \frac{\cJ_{o}(t)}{t} \ge \lambda \pi_{\lambda}(\eta(o) = 0).
\]
Moreover, if in addition $\lambda < r_o$ as well as
\begin{equation}\label{eq:SuperflowGrowthConditionSingle}
\lim_{n \rightarrow \infty} \abs{\mathcal{Z}_n} \min_{x \in \mathcal{Z}_n} r_{x,y} = \infty \, ,
\end{equation} the system exhibits a fan behaviour, i.e.\ we have 
\begin{equation}\label{eq:VanishingDensitySuperflowTheorem}
 \lim_{n \rightarrow \infty}\frac{1}{\abs{\mathcal{Z}_n}} \sum_{x \in \mathcal{Z}_n} \pi_{\lambda}(\eta(x)=1)   = 0 \, .
\end{equation}
Under a subflow rule, the system exhibits a shock behaviour, i.e.\ $\pi_{\lambda}=\nu_1$ and 
 \begin{equation}
 \lim_{t \rightarrow \infty}\frac{\cJ_{o}(t)}{t} = 0\, \qquad P_{\bT}- \text{almost surely}.
 \end{equation} 
\end{theorem}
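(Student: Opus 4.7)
The plan is to handle the convergence $\nu_{0} S_t \to \pi_\lambda$ abstractly via attractivity, to identify $\pi_\lambda$ through stochastic comparisons with product measures governed by Lemmas~\ref{lem:InvarianceFlow} and~\ref{lem:DominationSuperflow}, and to read off the current asymptotics from a Dynkin decomposition of the root-arrival process.

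\emph{Existence of $\pi_\lambda$ and the current decomposition.} Build the TASEP via the standard graphical construction driven by independent Poisson clocks on the edges and at the root; the evolution is monotone in this representation, so $\nu_{0} \leq \nu_{0} S_t$ yields $\nu_{0} S_s \leq \nu_{0} S_{s+t}$, and the family $(\nu_{0} S_t)_{t \geq 0}$ is stochastically non-decreasing, hence has a weak limit $\pi_\lambda$ which is stationary by the Feller property. Because $\eta_0 \equiv 0$, $\cJ_o(t)$ equals the number of reservoir arrivals at the root up to time $t$, and Dynkin's formula gives
\begin{equation*}
\cJ_o(t) = \int_0^t \lambda\,\mathds{1}(\eta_s(o) = 0)\,ds + M_t,
\end{equation*}
with $M_t$ a martingale of quadratic variation at most $\lambda t$, so $M_t/t \to 0$ almost surely. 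Coupling graphically with $(\eta_t^*)$ started from $\pi_\lambda$ gives $\eta_t \leq \eta_t^*$ pointwise and therefore $\mathds{1}(\eta_s(o) = 0) \geq \mathds{1}(\eta_s^*(o) = 0)$; the ergodic theorem applied along the stationary trajectory then upgrades $\P(\eta_s(o) = 0) \to \pi_\lambda(\eta(o) = 0)$ to the almost-sure bound $\varliminf_{t \to \infty} \cJ_o(t)/t \geq \lambda\,\pi_\lambda(\eta(o) = 0)$.

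\emph{Superflow and the fan regime.} Under a superflow rule, Lemma~\ref{lem:DominationSuperflow} stochastically dominates $(\eta_t)$ by a TASEP whose rates satisfy a flow rule, for which Lemma~\ref{lem:InvarianceFlow} exhibits a non-trivial product Bernoulli invariant measure $\bar\pi \neq \nu_1$. Passing to the limit yields $\pi_\lambda \leq \bar\pi$ stochastically, so $\pi_\lambda \neq \nu_1$, and in particular $\pi_\lambda(\eta(o) = 0) > 0$; combined with the Dynkin bound this gives the claimed linear lower bound on $\cJ_o(t)$. For the fan behaviour, in $\pi_\lambda$ the flux through the root equals the flux across $\mathcal{Z}_n$, so
\begin{equation*}
\lambda\,\pi_\lambda(\eta(o) = 0) \;=\; \sum_{x \in \mathcal{Z}_n}\sum_{y : (x,y) \in \Er} r_{x,y}\,\pi_\lambda(\eta(x)=1,\eta(y)=0).
\end{equation*}
Using the domination $\pi_\lambda \leq \bar\pi$ to ensure that, conditionally on $\eta(x) = 1$, not all children of $x$ are full with probability bounded away from $0$, together with the assumption $\lambda < r_o$, one extracts $\sum_{x \in \mathcal{Z}_n}\pi_\lambda(\eta(x)=1) \lesssim \lambda/\min_{x \in \mathcal{Z}_n} r_{x,y}$ up to lower-order corrections; dividing by $|\mathcal{Z}_n|$ and invoking \eqref{eq:SuperflowGrowthConditionSingle} then yields \eqref{eq:VanishingDensitySuperflowTheorem}.

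\emph{Subflow.} In any stationary measure the flux across $\mathcal{Z}_m$ is bounded by $\sum_{x \in \mathcal{Z}_m} r_x$, which tends to $0$ under \eqref{def:GlobalSubflow}, so the stationary current through every generation is $0$, i.e.\ $\pi_\lambda(\eta(o) = 0) = 0$; balance at the root forces $\pi_\lambda(\eta(o)=1,\eta(y)=0) = 0$ for each child $y$, so $\pi_\lambda(\eta(y) = 1) = 1$, and iterating over generations gives $\pi_\lambda = \nu_1$. The statement $\cJ_o(t)/t \to 0$ almost surely then follows from $\P(\eta_s(o) = 0) \to 0$ by dominated convergence in the Dynkin decomposition together with $M_t/t \to 0$. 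The main obstacle is the superflow step: implementing Lemma~\ref{lem:DominationSuperflow} in the generality of superflow (rather than flow) rates, and promoting the weak convergence $\P(\eta_s(o)=0) \to \pi_\lambda(\eta(o)=0)$ to an almost sure liminf through the graphical coupling with the stationary process, rather than only in $L^1$.
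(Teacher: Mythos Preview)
Your convergence argument for $\pi_\lambda$ is cleaner than the paper's: you use monotonicity of $t\mapsto\nu_0 S_t$ directly, whereas the paper approximates by finite trees $\bT_n$ with open boundaries, shows $\pi_\lambda^n\preceq\pi_\lambda^{n+1}$ via a generator comparison, and sandwiches $\nu_0 S_t$ between $\nu_0 S_t^n$ and $\pi_\lambda=\lim_n\pi_\lambda^n$. Both work; yours is shorter, theirs yields the finite-volume approximation as a by-product. Your Dynkin decomposition of $\cJ_o(t)$ is also more explicit than the paper's appeal to the ergodic theorem, and your subflow argument matches the paper's closely.

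The fan behaviour step, however, has a genuine gap. From the flux balance
\[
\lambda\,\pi_\lambda(\eta(o)=0)=\sum_{x\in\mathcal{Z}_n}\sum_{y:(x,y)\in \Er} r_{x,y}\,\pi_\lambda(\eta(x)=1,\eta(y)=0)
\]
you want to extract a bound on $\sum_{x\in\mathcal{Z}_n}\pi_\lambda(\eta(x)=1)$. You claim the domination $\pi_\lambda\preceq\bar\pi$ gives ``conditionally on $\eta(x)=1$, not all children of $x$ are full with probability bounded away from $0$''. But stochastic domination only controls expectations of increasing functions; it does not transfer to conditional probabilities given $\{\eta(x)=1\}$. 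What you do get is the unconditional bound $\pi_\lambda(\eta(x)=1,\ \text{all children occupied})\leq\rho^{1+d_x}$, which yields
\[
\sum_{x\in\mathcal{Z}_n}\pi_\lambda(\eta(x)=1)\ \leq\ \sum_{x\in\mathcal{Z}_n}\rho^{1+d_x}\ +\ \frac{\lambda}{\min_{x\in\mathcal{Z}_n}r_{x,y}}\,,
\]
and the first sum, divided by $|\mathcal{Z}_n|$, does not vanish. The paper fixes this by looking $m$ steps ahead: for each $x\in\mathcal{Z}_n$ fix a descending path $(x=x_1,\dots,x_m)$, bound $\pi_\lambda(\eta(x_i)=1\ \forall i)\leq\rho^m$ via the Bernoulli domination, and bound the complementary event (some $1$--$0$ pattern along the path) by summing the flux identities over levels $n,\dots,n+m$. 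Choosing $m$ so that $\rho^m<\delta/2$ makes the first term small, and condition \eqref{eq:SuperflowGrowthConditionSingle} kills the second. Your one-step version cannot be rescued without this depth-$m$ trick.
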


We direct the reader to Section \ref{sec:LargeTimes} for the proof of Theorem \ref{thm:ConvergenceFlow}, a more detailed discussion and further results regarding the TASEP on trees in equilibrium.

\subsection{One-dimensional TASEP and parallels with TASEP on trees}
\label{ssec:parallel}


A great strength of various particle systems are their explicit hydrodynamic limits, as macroscopic and microscopic behaviour are connected; see for example \cite{ferrari2018tasep} for a beautiful survey on TASEP, and references therein. Hydrodynamic limits for the homogeneous TASEP, in the sense of a rigorous connection to the Burgers equation, were originally established by Rost in the rarefaction fan case \cite{Ros-81}. This result was extended in various ways in \cite{Sep-97-aap-1, S:CouplingInterfaces, Sep-98-mprf-1, Sep-99-aop}.
The particle density was shown to satisfy a scalar conservation law with an explicit flux function that turns out to be the convex dual of the limiting level curve of the last passage limiting shape. The density is the almost sure derivative of the aggregated current process, which, when appropriately scaled, satisfies a Hamilton-Jacobi-Bellman equation.   

A key endeavour is to understand the equilibrium measures. For the homogeneous one-dimensional TASEP, the extremal invariant measures are Bernoulli-product measures and Dirac measures on blocking states; see   \cite{BLM:CharacterizationStationary,J:Extremal,L:Coupling}. 
In Lemma \ref{lem:InvarianceFlow} we verify for the TASEP on trees  that a flow rule implies the existence of  non-trivial invariant product measures.

When the jump rates are deterministic, but not equal, we have a spatially inhomogeneous TASEP. In this case, even in dimension one, less is known and usually the results have some conditions on the rates. For example, consider the one-dimensional particle system where we alter the rate of the Poisson process of the origin only; any particle at $0$ jumps to site 1 at rate $r <1$, while all other jump rates remain $1$. 
The question is whether this local microscopic change affects the macroscopic current for all values of $r <1$. This is known as the `slow bond problem', introduced in \cite{Ja-92, Ja-94} on a finite segment with open boundaries. On $\Z$, progress was made in \cite{Sep-01-jsp} where a coupling with the corner growth model in LPP showed that the current is affected for $r$ less than $\sim 0.425$, and a hydrodynamic limit  for the particle density was proven. A positive answer to the question appeared in  \cite{BSV:SlowBond}.  

When the inhomogeneities are not local but macroscopic, several articles show hydrodynamic limits for TASEP  (or study the equivalent inhomogeneous LPP model) with increasing degree of complexity for admissible deterministic rates coming from a macroscopic speed function \cite{Cal-15, CG:LPP2018, GKS:TASEPdisc,RT:LPPInhom}. The commonality between them is that the rates need to behave in a nice way so that the current of TASEP at position $\fl{nx}$ at time $\fl{nt}$ remains linear in $n$. In this  article we use a coupling with the corner growth model to bound the current; see Section \ref{sec:LPP}. We only make the Assumptions \hyperref[def:UniformElliptic]{\normalfont(UE)} and \hyperref[def:ExponentialScaling]{\normalfont(ED)} for  admissible rates. As such, evidenced by Theorems \ref{thm:LinearCurrentL} and \ref{thm:CurrentfixedT}, we have different regimes for the order of current up to a given time, where the current is not necessarily linear in time. Our results such as the sharp order of magnitude for the time window assume more on the decay of the jump rates across the tree; see Section \ref{sec:RegularTreeTASEP} for explicit calculations on the $d$-regular tree. 

Depending on initial particle configurations, the macroscopic evolution of the particle density in one-dimensional TASEP may exhibit a shock or a rarefaction fan, as one can see from the limiting partial differential equation. In a simple two-phase example, even starting from macroscopically constant initial conditions, one can see the simultaneous development of shocks and fans, depending on the common value of the density \cite{GKS:TASEPdisc}.  In this article, we can still describe shock or fan behaviour of the limiting particle distribution; see Theorem~\ref{thm:ConvergenceFlow}, even without a hydrodynamic limit. In particular, we show that this behaviour in fact occurs in the limit, starting from the all empty initial condition. 
A tool we are using is to approximate the TASEP by a finite system with open boundaries; see Section  \ref{sec:LargeTimes}. Stationary measures for the one-dimensional TASEP with reservoirs and deaths of particles were studied using elaborated tools like the Matrix product ansatz; see \cite{BE:Nonequilibrium}, or combinatorial representations, like staircase tableaux and Catalan paths  \cite{CW:TableauxCombinatorics,M:TASEPCombinatorics,SW:CombinatoricSemiopen}.

\subsection{Outline of the paper}

In the remainder of the paper, we give proofs for the results presented in Section \ref{sec:ModelResults}. We start in Section \ref{sec:Disentangelment} with the proof of the disentanglement theorem. The proof combines combinatorial arguments, geometric properties of Galton--Watson trees and large deviation estimates on the particle movements. 
In Section \ref{sec:Couplings}, we introduce three couplings with respect to the TASEP on trees which will be helpful in the proofs of the remaining theorems. This includes the canonical coupling for different initial configurations, a coupling to independent random walks and a comparison to a slowed down TASEP on the tree which can be studied using inhomogeneous last passage percolation. 
These tools are then applied in Section \ref{sec:CurrentTheorems} to prove Theorems \ref{thm:LinearCurrentL} and  \ref{thm:CurrentfixedT} on the current, which in return give Theorems \ref{thm:LinearCurrentLSPECIAL} and \ref{thm:CurrentfixedTSPECIAL}. 
We show in Section \ref{sec:RegularTreeTASEP} that the current bounds can be sharpened for specific rates on the $d$-regular tree. 
In Section \ref{sec:LargeTimes}, we turn our focus to the large time behaviour of the TASEP and prove Theorem \ref{thm:ConvergenceFlow}. This uses ideas from \cite{L:ErgodicI} as well as the canonical coupling. 
We conclude with an outlook on open problems. 


\section{The disentanglement theorem}\label{sec:Disentangelment}

The proof of Theorem \ref{thm:DisentanglementGWT} will be divided into four parts. First, we give an a priori  argument on the level where the particles disentangle, requiring assumption \hyperref[def:UniformElliptic]{\normalfont(UE)}. We then study geometric properties of Galton--Watson trees. Afterwards, we estimate the time of $n$ particles to enter the tree. This will require only assumption  \hyperref[def:ExponentialScaling]{\normalfont(ED)}. In a last step, the ideas are combined in order to prove Theorem \ref{thm:DisentanglementGWT}.

\subsection{An a priori bound on the disentanglement}
In this section, we give an a priori bound on the disentanglement of the trajectories within the exclusion process.  This bound relies on a purely combinatorial argument, where we count the number of times a particle performing TASEP has a chance to disentangle from a particle ahead. Recall that we start from the configuration where all sites are empty.
 For a given infinite, locally finite rooted tree $\bT$ and $x,y \in V(\bT)$, recall that we denote by  $[x,y]$ the set of vertices in the shortest path in $\bT$ connecting $x$ and $y$. We set 
\begin{equation}\label{def:NumberOfTwoOffsprings}
 F(o,x) := \left|\left\{ z \in [o,x] \setminus \{ x \} \colon \deg(z) \geq 3  \right\} \right|
\end{equation} 
to be the number of vertices in $[o,x] \setminus \{ x \}$ with degree at least $3$. 
For any fixed  tree $(\bT,o) \in \mathcal T$, let $d_{\bT}$ be the smallest possible number of offspring a site can have. Note that when $\bT$ is a Galton--Watson tree,  $d_\bT = d_{\min}$ holds $\GW$-almost surely for $d_{\min}$ from \eqref{def:MinimalDegreeConditionedOffspring}. For all $i, m \in \N$, let $z_i^m \in \cZ_m$ denote the unique site at generation $m$ which is visited by the $i^{\text{th}}$ particle which enters the tree.
\begin{proposition}
\label{lem:Early-sep}
For $(\bT,o)\in \mathcal{T}$, consider the TASEP on $\bT$ where $n$ particles are generated at the root according to an arbitrary rule. Assume that  \hyperref[def:UniformElliptic]{\normalfont(UE)} holds for some $\varepsilon>0$. Then 
\begin{equation}\label{eq:DisentanglementStatementLemma}
P_{\bT}\left( z_i^m \neq z_j^m  \text{ for all } i,j \in \{1,\dots,n\} , \ i\neq j\right) \geq 1- n^2 \left(\frac{1}{\varepsilon+1}\right)^{F_n(m)},
\end{equation}
where for all $m,n \in \N$, we set
\begin{equation}\label{def:DisentangleGeneration}
F_n(m) := 
\begin{cases}  \vspace{0.3cm}
\min\left\{ F(o,x)  \colon x \in \cZ_m\right\} - n & \text{ if \,\,} d_{\bT} =1\, , \\
\displaystyle m - \left\lceil  n (d_{\bT}-1)^{-1}\right\rceil & \text{ if \,\,} d_{\bT} \geq 2 \,   .
\end{cases}
\end{equation} 
\end{proposition}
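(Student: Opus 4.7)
The plan is to fix a pair of particles $i<j$ in $\{1,\ldots,n\}$, bound $P_{\bT}(z_i^m = z_j^m)$ by $(1+\varepsilon)^{-F_n(m)}$, and then take a union bound over the $\binom{n}{2}\leq n^2$ such pairs. For a fixed pair I will condition on the trajectory $z_i^0, z_i^1, \ldots, z_i^m$ of particle $i$ and write $\xi_k := z_i^{k+1}$ for its exit child at generation $k$. On the event $\{z_j^m = z_i^m\}$, particle $j$ must visit $z_i^k$ and then jump to $\xi_k$ for every $k \in \{0,\ldots,m-1\}$; denote by $T_{j,k}$ the arrival time of $j$ at $z_i^k$ on this event and by $S_{j,k}$ its departure, so that $j$ occupies $z_i^k$ throughout $[T_{j,k}, S_{j,k})$.

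The local step is the following estimate: if $z_i^k$ has at least two children and at time $T_{j,k}$ some child $y^*\neq \xi_k$ of $z_i^k$ is empty, then
\[
P_{\bT}\bigl( j \text{ leaves } z_i^k \text{ through } \xi_k \,\bigl|\, \mathcal{F}_{T_{j,k}} \bigr) \leq \frac{r_{z_i^k,\xi_k}}{r_{z_i^k,\xi_k}+r_{z_i^k,y^*}} \leq \frac{1}{1+\varepsilon},
\]
where the final inequality invokes \hyperref[def:UniformElliptic]{(UE)}. The middle bound is justified by the graphical construction from independent Poisson clocks on the directed edges: because $j$ occupies $z_i^k$ on $[T_{j,k}, S_{j,k})$ and edges are directed downward, no particle can enter the subtree of $y^*$ through $z_i^k$ during this window, so $y^*$ remains empty throughout $j$'s stay. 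Competing the two independent Poisson clocks on $(z_i^k,\xi_k)$ and $(z_i^k,y^*)$ after $T_{j,k}$, the event that $j$ ultimately exits through $\xi_k$ forces the first ring on $(z_i^k,\xi_k)$ to precede the first ring on $(z_i^k,y^*)$, yielding the stated ratio.

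The combinatorial heart of the proof is a deterministic bound on the number of bad generations, where \emph{bad} means either that $z_i^k$ has a single child, or that every sibling of $\xi_k$ is occupied at time $T_{j,k}$. At a bad $k$ where $z_i^k$ has multiple children, every sibling of $\xi_k$ is occupied at $T_{j,k}$, and distinct siblings are occupied by distinct particles since they root pairwise disjoint subtrees. Crucially, across the generations of $i$'s path the subtrees rooted at the non-$\xi_k$ children of $z_i^0,\ldots,z_i^{m-1}$ are pairwise disjoint, and --- since the tree is directed away from the root --- any particle entering one such subtree remains inside it forever. Hence each of the at most $n-2$ particles distinct from $i$ and $j$ contributes to the sibling-occupation count of at most one bad generation. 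When $d_{\bT}\geq 2$ every vertex on $i$'s path has at least $d_{\bT}$ children, so each bad generation absorbs at least $d_{\bT}-1$ of these particles, giving at most $\lceil n/(d_{\bT}-1)\rceil$ bad generations and hence at least $F_n(m)$ good ones. When $d_{\bT}=1$, the same argument applied to the $F(o, z_i^m) \geq F_n(m)+n$ branching vertices on the path yields at most $n-2$ bad branching generations and hence at least $F_n(m)$ good ones among them.

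To conclude, I apply the strong Markov property successively at the stopping times $T_{j,0}, T_{j,1}, \ldots, T_{j,m-1}$: each good generation contributes a conditional factor of at most $1/(1+\varepsilon)$, while bad generations contribute the trivial factor $\leq 1$. Since the total number of good generations is deterministically at least $F_n(m)$, taking products yields $P_{\bT}(z_j^m = z_i^m) \leq (1+\varepsilon)^{-F_n(m)}$, and a union bound over the at most $n^2$ pairs produces \eqref{eq:DisentanglementStatementLemma}. The main obstacle is the counting argument of the third paragraph: translating the geometric fact that sibling subtrees along $i$'s path are pairwise disjoint into a hard budget of $n-2$ on the total number of blockings, so that the local $1/(1+\varepsilon)$ bound is usable at all but a controlled number of generations.
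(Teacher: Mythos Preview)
Your proposal is correct and follows essentially the same approach as the paper. Both arguments fix a pair $i<j$, use the combinatorial observation that off-path sibling subtrees along the trajectory are pairwise disjoint (so each of the other particles can block at most one generation), deduce that at least $F_n(m)$ generations are ``good'', apply the local $(1+\varepsilon)^{-1}$ bound at each good generation via the competing Poisson clocks and (UE), and finish with a union bound. The only cosmetic differences are that the paper phrases the main estimate as a bound on $P_{\bT}(z_j^m=x)$ for a fixed target $x\in\cZ_m$ (then invokes that particle $i$'s trajectory is unaffected by particle $j$), whereas you condition directly on $i$'s path; and the paper's bookkeeping for $d_{\bT}\in\{1,2\}$ is framed in terms of the \emph{first attempted} transition rather than sibling occupancy at arrival, which yields the slightly looser count $n$ instead of your $n-2$.
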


We will use Proposition \ref{lem:Early-sep} to control the probability that two particles 
have the same exit point at $\cZ_m$, in a summable way, provided that $F_n(m) \geq c\log(n)$ for some $c=c(\varepsilon)>0$. Note that this bound can in general be quite rough as for example on the regular tree with rates as in \eqref{eq:RunningExample}, we expect $n$ particles to disentangle already after order $\log n$ generations.

\begin{proof}[Proof of Proposition \ref{lem:Early-sep}] Consider the $j^{\text{th}}$ particle for some $j  \in [n]:=\{1,\dots,n\}$ which enters the tree. We show that the probability of particle $j$ to exit from $x \in \cZ_{m}$ satisfies
\be \label{eq:HittingTrajectory}
P_{\bT}\left( z_j^m=x \right) \leq \Big(\frac{1}{1+\e}\Big)^{F_n(m)}
\ee
for all $j \in [n]$. Note that if  particle $j$ exits through $x$, it must follow the unique path $[o,x]$; see also Figure \ref{fig:AprioriDisentanglement}. Our goal is to find a generation $m$ large enough that guarantees that on any ray the particle will have enough opportunities to escape this ray. \\

For $d_{\bT} \geq 3$, we argue that any particle will encounter at least $F_n(m)$ many locations on $[o,x]$ which have at least $2$ holes in front when the particle arrives. To see this, suppose that particle $j$ encounters at least $n (d_{\bT}-1)^{-1}$ generations among the first $n$ generations with no two empty sites in front of it when arriving at that generation. In other words, particle $j$ sees at least $d_{\bT} - 1$ particles directly in front of its current position when reaching such a generation. Since particle $j$ may follow the trajectory of at most one of these particles, this implies that particle $j$ encounters at least $(d_{\bT}-1)\cdot \frac{n}{d_{\bT}-1} = n$ different particles in total until reaching level $n$. This is a contradiction as $j \leq  n$ and the tree was originally empty.  \\

\begin{figure}[t]
\centering
\begin{tikzpicture}[scale=1]

   \node[shape=circle,scale=1.5,draw] (A) at (0,0){} ;
 	\node[shape=circle,scale=1.5,draw] (B1) at (2,1){} ;
 	\node[shape=circle,scale=1.5,draw] (B2) at (2,-1){} ;
	\node[shape=circle,scale=1.5,draw] (C1) at (4,2) {};
	\node[shape=circle,scale=1.5,draw] (C2) at (4,1) {};
	\node[shape=circle,scale=1.5,draw] (C3) at (4,-1) {};
	
	\node[shape=circle,scale=1.5,draw, thick] (D1) at (6,2) {};
	\node[shape=circle,scale=1.5,draw, thick] (D2) at (6,1) {};
	\node[shape=circle,scale=1.5,draw, thick] (D3) at (6,0) {};	
	\node[shape=circle,scale=1.5,draw, thick] (D4) at (6,-1) {};

   \node[shape=circle,scale=1.2,fill,red] (AX1) at (A){} ;
         \node[shape=circle,scale=1.2,fill,red] (AX3) at (C2){} ; 
            \node[shape=circle,scale=1.2,fill,red] (AX4) at (D4){} ;  

			\draw[thick] (A) to (B1);	
			\draw[thick, RoyalBlue] (A) to (B2);	
			\draw[thick] (B1) to (C1);		
			\draw[thick] (B1) to (C2);			
			\draw[thick, RoyalBlue] (B2) to (C3);			
			\draw[thick] (C1) to (D1);	
			\draw[thick] (C2) to (D2);			
			\draw[thick] (C3) to (D3);				
			\draw[thick, RoyalBlue] (C3) to (D4);	
						
			\draw[thick,dashed] (D1) to (7,2.3);		
			\draw[thick,dashed] (D1) to (7,1.7);	

			\draw[thick,dashed] (D2) to (7,1);	
				
			\draw[thick,dashed] (D3) to (7,0.3);	
			\draw[thick,dashed] (D3) to (7,-0.3);	
			
			\draw[thick,dashed] (D4) to (7,-0.7);		
			\draw[thick,dashed] (D4) to (7,-1.3);	
				
		   \node (AX) at (0,-0.55){$i$};
			\node (AY) at (6,-1.6){$j$};		

    		\draw[->,thick] (-0.8,0) to (A);	

			\draw[->,thick] (A) to (B1);	
			\draw[->,line width=2pt, RoyalBlue] (A) to (B2);	
			\draw[->,thick] (B1) to (C1);		
			\draw[->,thick] (B1) to (C2);			
			\draw[->,line width=2pt, RoyalBlue] (B2) to (C3);			
			\draw[->,thick] (C1) to (D1);	
			\draw[->,thick] (C2) to (D2);			
			\draw[->,thick] (C3) to (D3);				
			\draw[->,line width=2pt, RoyalBlue] (C3) to (D4);

\end{tikzpicture}
\caption[Key idea for the proof of the a priori bound]{\label{fig:AprioriDisentanglement} Visualization of the key idea for the proof of the a priori bound on the disentanglement. When \hyperref[def:UniformElliptic]{\normalfont(UE)} holds, the probability that particle $i$ follows the blue trajectory of particle $j$ is at most $\big( \frac{1}{1+\varepsilon} \big)^2$.}
\end{figure}
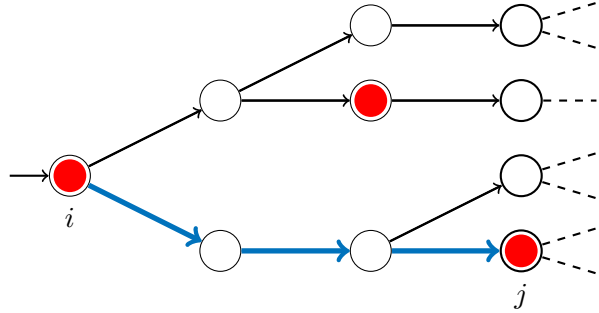

For $ d_{\bT} \in \{1,2\}$ we apply a similar argument. We need to find $m$ large enough so that every possible trajectory has $\min_{x \in \cZ_m}F(o, x) \ge n$ locations where, when a particle arrives there are at least two children, and there is no particle ahead.   
By definition, every possible trajectory has at least $F(o, x) \ge F_n(m)+n$ sites with at least two children. 
Observe that in order to follow the trajectory $[o,x]$ for some $x \in \cZ_m$, the first accepted transition at every stage must be along $[o,x]$. 
But there can be at most $n$ sites at which the first attempt was not to follow $[o,x]$ and this attempt was suppressed. 
This is because in order to block an attempt of leaving $[o,x]$, the blocking particle cannot be on $[o,x]$ and thus block only a single attempt of particle $j$ to jump. 
Hence, there must be at least $F_n(m)$ sites of degree at least $3$ accepting the first attempted transition. \\

Now we prove \eqref{eq:HittingTrajectory}. Suppose that particle $j$ is at one of the $F_n(m)$ many locations, say $y \in \cZ_\ell$, on $[o,x]$ where two different children  $z_1, z_2$ of $y$ are vacant. At most one of them belongs to $[o,x]$, say $z_1$. Using \hyperref[def:UniformElliptic]{\normalfont(UE)}, the probability of selecting $z_1$ is bounded from above by $(1+\e)^{-1}$. To stay on $[o,x]$, we must pick the unique site in $[o,x]$ at least $F_n(m)$ many times, independently of the past trajectory. This shows \eqref{eq:HittingTrajectory}.
Since particle $i$ is not influenced by the motion of particle $j$ for all $j> i$, we conclude
\begin{equation*}
P_{\bT}\left( \exists i,j \in [n], i \neq j, \colon z_i^m = z_j^m  \right)  \leq \sum_{1\leq i<j\leq n}P_{\bT}\left( z_i^m=z_j^m \right) \leq n^2 \Big(\frac{1}{1+\e}\Big)^{F_n(m)} , 
\end{equation*} applying 
\eqref{eq:HittingTrajectory} for the last inequality.
\end{proof}

\subsection{Geometric properties of the Galton--Watson tree}

Next, we give an estimate on the number $F(o,x)$, defined in \eqref{def:NumberOfTwoOffsprings}, which will be essential in the proof of Theorem \ref{thm:DisentanglementGWT} when there is a positive probability to have exactly one offspring.  \\

We define the \textbf{core} of a Galton--Watson tree  to be the Galton--Watson tree, which we obtain by conditioning in the offspring distribution with respect to $(p_k)_{k \in \N}$ on producing at least $2$ sites. Intuitively, we obtain the core from a given tree by collapsing all linear segments to single edges.  On the other hand, given a core $\tilde{\bT}$ according to the conditioned offspring distribution, we can reobtain a Galton--Watson tree with the original offspring distribution according to $(p_k)_{k \in \N}$, by extending every edge $\tilde e$ to a line segment of size $G_{\tilde e}$ where $(G_{\tilde e})_{\tilde e \in E(\tilde \bT)}$ are i.i.d.\ Geometric-$(1-p_1)$-distributed random variables supported on 
$\bN_0$. 
Moreover, we have to attach a line segment $[o,\tilde{o}]$ of Geometric-$(1-p_1)$-size to the root $\tilde{o}$ of $\tilde{\bT}$ and declare $o$ to be the new root of the tree. \\

 An illustration of this procedure is given in Figure \ref{fig:Core}.  We now give an estimate on how much the tree is stretched when extending the core with the conditioned  offspring distribution to a Galton--Watson tree with an offspring distribution with respect to $(p_k)_{k \in \N}$.

\begin{figure}
\centering
\begin{tikzpicture}[scale=0.8]

	\node[shape=circle,scale=1,draw,fill=red] (A2) at (0,0){} ;
 	\node[shape=circle,scale=1,draw,fill=red] (B2) at (2,1.5){} ;
	\node[shape=circle,scale=1,draw,fill=red] (C2) at (2,-1.5) {};
	\node[shape=circle,scale=1,draw,fill=red] (D2) at (4,0.5){} ;
	\node[shape=circle,scale=1,draw,fill=red] (E2) at (4,2.5){} ;
	\node[shape=circle,scale=1,draw,fill=red] (F2) at (4,-1){} ;
	\node[shape=circle,scale=1,draw,fill=red] (G2) at (4,-2){} ;
	\node[shape=circle,scale=1,draw,fill=red] (H2) at (4,1.5){} ;

	
	\draw[thick] (A2) to (B2);		
	\draw[thick] (A2) to (C2);		
	\draw[thick] (B2) to (D2);		
	\draw[thick] (B2) to (E2);	
	\draw[thick] (C2) to (F2);
	\draw[thick] (C2) to (G2);		
	\draw[thick] (B2) to (H2);

	\draw[thick, dashed] (D2) to (5,0.8);			
	\draw[thick, dashed] (D2) to (5,0.2);	
				
	\draw[thick, dashed] (E2) to (5,2.8);		
	\draw[thick, dashed] (E2) to (5,2.2);	
			
	\draw[thick, dashed] (F2) to (5,-1.3);		
	\draw[thick, dashed] (F2) to (5,-1);		
	\draw[thick, dashed] (F2) to (5,-0.7);	

	\draw[thick, dashed] (G2) to (5,-2);		
	\draw[thick, dashed] (G2) to (5,-2.3);		
	\draw[thick, dashed] (G2) to (5,-1.7);	

	\draw[thick, dashed] (H2) to (5,1.8);		
	\draw[thick, dashed] (H2) to (5,1.2);	
	\draw[thick, dashed] (H2) to (5,1.5);	
	
		\node[scale=0.9]  at (0.2,-0.5){$\tilde{o}$};


\def\x{9}		
		
		\node[shape=circle,scale=0.8,draw,fill=gray!20] (Z2) at (0+\x,0){} ;
			\node[shape=circle,scale=1,draw,fill=red] (A2) at (1+\x,0){} ;
 	\node[shape=circle,scale=1,draw,fill=red] (B2) at (4+\x,1.5){} ;
	\node[shape=circle,scale=1,draw,fill=red] (C2) at (4+\x,-1.5) {};
	\node[shape=circle,scale=1,draw,fill=red] (D2) at (8+\x,0.5){} ;
	\node[shape=circle,scale=1,draw,fill=red] (E2) at (8+\x,2.5){} ;
	\node[shape=circle,scale=1,draw,fill=red] (F2) at (8+\x,-1){} ;
	\node[shape=circle,scale=1,draw,fill=red] (G2) at (8+\x,-2){} ;
	\node[shape=circle,scale=1,draw,fill=red] (H2) at (8+\x,1.5){} ;

	\draw[thick] (A2) to (Z2);		
	\draw[thick] (A2) to (B2);		
	\draw[thick] (A2) to (C2);		
	\draw[thick] (B2) to (D2);		
	\draw[thick] (B2) to (E2);	
	\draw[thick] (C2) to (F2);
	\draw[thick] (C2) to (G2);		
	\draw[thick] (B2) to (H2);

	\node[shape=circle,scale=0.8,draw,fill=gray!20] (E21) at (6+\x,2){} ;
	
	
	\node[shape=circle,scale=0.8,draw,fill=gray!20] (G21) at (6+\x,-1.75){} ;
	\node[shape=circle,scale=0.8,draw,fill=gray!20] (F21) at (6.66666+\x,-1.16666){} ;		
	\node[shape=circle,scale=0.8,draw,fill=gray!20] (F22) at (5.33333+\x,-1.33333){} ;	
	
	\node[shape=circle,scale=0.8,draw,fill=gray!20] (D21) at (6.66666+\x,0.833333){} ;		
	\node[shape=circle,scale=0.8,draw,fill=gray!20] (D22) at (5.33333+\x,1.166666){} ;		
	
	\node[shape=circle,scale=0.8,draw,fill=gray!20] (B21) at (2+\x,0.5){} ;		
	\node[shape=circle,scale=0.8,draw,fill=gray!20] (B22) at (3+\x,1){} ;

	\draw[thick, dashed] (D2) to (9+\x,0.8);			
	\draw[thick, dashed] (D2) to (9+\x,0.2);	
				
	\draw[thick, dashed] (E2) to (9+\x,2.8);		
	\draw[thick, dashed] (E2) to (9+\x,2.2);	
			
	\draw[thick, dashed] (F2) to (9+\x,-1.3);		
	\draw[thick, dashed] (F2) to (9+\x,-1);		
	\draw[thick, dashed] (F2) to (9+\x,-0.7);	

	\draw[thick, dashed] (G2) to (9+\x,-2);		
	\draw[thick, dashed] (G2) to (9+\x,-2.3);		
	\draw[thick, dashed] (G2) to (9+\x,-1.7);	

	\draw[thick, dashed] (H2) to (9+\x,1.8);		
	\draw[thick, dashed] (H2) to (9+\x,1.2);	
	\draw[thick, dashed] (H2) to (9+\x,1.5);	
	
		\node[scale=0.9]  at (0.2+\x,-0.5){$o$};
		
		\node[scale=0.9]  at (4.2+\x,-3){$\bT_o$};
		\node[scale=0.9]  at (3.2,-3){$\tilde \bT_{\tilde o}$};

\end{tikzpicture}
\caption[A Galton--Watson tree and its core]{\label{fig:Core}  A core at the left-hand side and one of its corresponding Galton--Watson trees on the right-hand side. We obtain the Galton--Watson tree from the core (the core from the Galton--Watson tree) by adding  (removing) the smaller vertices depicted in gray.}
\end{figure}
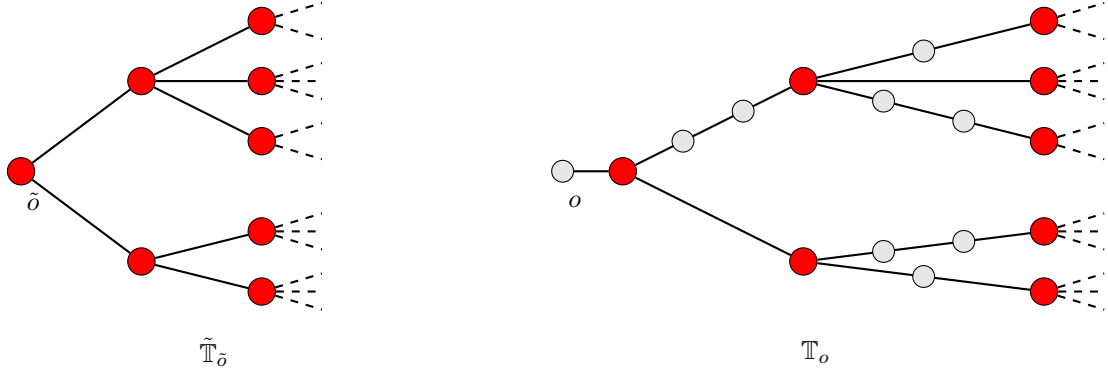

\begin{lemma} \label{lem:coreLD}  Let $(H_n)_{n \in \N}$ be an increasing sequence that goes to infinity 
and assume that $p_{1} \in (0,1)$. Recall $\tilde{ \mathfrak m}$ from \eqref{def:MinimalDegreeConditionedOffspring}. Set $M_n := \ce{ \alpha H_n}$ for all $n \in \N$, where
\begin{equation}
\alpha :=  \frac{5+ \log_2 \tilde{ \mathfrak m}}{\log_2(1+p_1) -\log_2({2p_1})}\, .
\end{equation}
Then 
\be\label{eq:de-coreL2}
\GW\Big( \inf_{x \in  \cZ_{M_n}} \sum_{v\in [o, x)} \mathds{1}\{ \deg(v) \geq 3 \} \ge \ce{ H_n}  \Big) \ge 1 - 2^{-2H_n+1} \, .
\ee 
\end{lemma}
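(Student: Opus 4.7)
The plan is to exploit the core--geometric decomposition of the Galton--Watson tree sketched immediately before the lemma. Set $K := \lceil H_n \rceil$. First I reduce the statement to a concentration inequality on the core: I claim that the event in \eqref{eq:de-coreL2} is implied by
\[
\mathcal{E}\ :=\ \bigl\{\,|x_{\tilde x}| < M_n\ \text{for every}\ \tilde x \in \tilde \cZ_K\,\bigr\},
\]
where $|x_{\tilde x}|$ denotes the original-tree generation of the core vertex $\tilde x$. Indeed, for any $z\in\cZ_{M_n}$ the path $[o,z]$ in $\bT$ contracts to a core ray passing through $K+1$ successive core vertices $\tilde x_0=\tilde o,\tilde x_1,\ldots,\tilde x_K$, and on $\mathcal E$ one has $|x_{\tilde x_K}|<M_n=|z|$; hence $\tilde x_0,\ldots,\tilde x_K\in[o,z)$, and each of $\tilde x_1,\ldots,\tilde x_K$ is a \emph{non-root} core vertex of $\bT$, so has degree $\geq 3$, contributing $K$ to the sum in \eqref{eq:de-coreL2} regardless of whether the root itself happens to have degree $\geq 3$.

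Next I bound $\GW(\mathcal E^c)$ by a first-moment/Markov method. For each fixed $\tilde x\in\tilde\cZ_K$ the original-tree generation decomposes as
\[
|x_{\tilde x}|\ =\ K + \sum_{i=0}^{K} G_i,
\]
where $G_0,\ldots,G_K$ are i.i.d.\ $\text{Geometric}(1-p_1)$ on $\bN_0$ (counting the degree-$2$ vertices inserted along the initial segment $[o,\tilde o]$ and along each of the $K$ core edges), and crucially these are independent of the structure of $\tilde \bT$. I tune the Chernoff parameter by setting $\theta := \ln\tfrac{1+p_1}{2p_1}$, so that $E[e^{\theta G_i}] = (1-p_1)/(1-p_1 e^\theta) = 2$, and write $\beta := \theta/\ln 2 = \log_2\tfrac{1+p_1}{2p_1}$. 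Chernoff's inequality then gives $P(|x_{\tilde x}|\geq M_n)\leq 2^{K+1}\cdot 2^{-\beta(M_n-K)}$, and combining with $E[|\tilde\cZ_K|]=\tilde{\mathfrak m}^{K}$ the Markov union bound yields
\[
\GW(\mathcal E^c)\ \leq\ E\bigl[\#\{\tilde x\in\tilde\cZ_K\colon |x_{\tilde x}|\geq M_n\}\bigr]\ \leq\ \tilde{\mathfrak m}^{K}\cdot 2^{K+1}\cdot 2^{-\beta(M_n-K)}.
\]

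Substituting $K=\lceil H_n\rceil$, $M_n=\lceil \alpha H_n\rceil$, and using the defining identity $\beta\alpha = 5+\log_2\tilde{\mathfrak m}$, the base-$2$ logarithm of the right-hand side collapses (after cancellation) to $H_n(\beta - 4)+O(1)$, from which the target decay $2^{-2H_n+1}$ follows. The choice $E[e^{\theta G_i}]=2$ is precisely the Chernoff tuning that balances the union-bound growth $\tilde{\mathfrak m}^{K}$ against the geometric-tail decay, and the additive constant $5$ in the definition of $\alpha$ is calibrated to provide the extra $-2H_n$ of slack. The main obstacle is really just the arithmetic: tracking the $O(1)$ corrections from the ceilings $\lceil H_n\rceil$, $\lceil \alpha H_n\rceil$, from the precise convention on the size of the initial segment $[o,\tilde o]$, and from the potential failure of the root $o$ to have degree $\geq 3$ in $\bT$ (which is what forces the reduction to use $\tilde \cZ_K$ rather than $\tilde \cZ_{K-1}$, and thus $K+1$ rather than $K$ geometric variables).
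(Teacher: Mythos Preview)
Your approach is essentially the paper's: reduce to the core decomposition, apply a Chernoff bound to the sum of Geometric stretching variables, and union bound over the vertices of $\tilde\cZ_K$. Your first-moment union bound via $E[|\tilde\cZ_K|]=\tilde{\mathfrak m}^{K}$ is in fact cleaner than the paper's two-step argument (Markov on $|\tilde\cZ_{H_n}|$ followed by a conditional union bound), and it saves you a factor $2^{2H_n}$ in the budget.

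The problem is the last step. Your exponent $H_n(\beta-4)+O(1)$ is computed correctly from your bound $\tilde{\mathfrak m}^{K}\cdot 2^{K+1}\cdot 2^{-\beta(M_n-K)}$, but the claim ``from which the target decay $2^{-2H_n+1}$ follows'' is not justified and is in general false: you would need $\beta-4\le -2$, i.e.\ $\beta=\log_2\tfrac{1+p_1}{2p_1}\le 2$, which is equivalent to $p_1\ge 1/7$. For $p_1<1/7$ your bound is weaker than required, and for $p_1<1/31$ (so $\beta>4$) it is not even $o(1)$. The culprit is the $+\beta K$ term coming from your (careful) accounting $|x_{\tilde x}|=K+\sum_{i=0}^K G_i$, which forces the Chernoff bound to act on the threshold $M_n-K$ rather than $M_n$.

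By contrast, the paper writes the image generation of a core vertex at level $H_n$ simply as $\sum_{i=1}^{H_n} G_i$ with $G_i$ Geometric on $\N_0$ (dropping the deterministic shift by $H_n$ and the initial segment), and then the Chernoff bound reads $2^{-\beta M_n}\cdot 2^{H_n}$; combined with the Markov factor $\tilde{\mathfrak m}^{H_n}2^{2H_n}$, this lands exactly at $2^{-2H_n}$ by the calibration $\beta\alpha=5+\log_2\tilde{\mathfrak m}$. So the discrepancy between your exponent and the paper's is precisely the $\beta K$ coming from the shift you (correctly) included but the paper omitted. If you want your version to close, you either have to follow the paper's bookkeeping for the image generation, or else re-optimise the Chernoff parameter (your choice $E[e^{\theta G_i}]=2$ is no longer the right tuning once the threshold is $M_n-K$ rather than $M_n$) and accept that the constant $\alpha$ in the statement was calibrated to the paper's accounting rather than yours.
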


\begin{proof} Note that all sites in the core $\tilde \bT$ other than the root have at least degree $3$. Hence, it suffices to bound the probability that all sites at generation $H_n$ of $\tilde \bT$ are mapped to a generation  less or equal than $M_n$ in the corresponding Galton--Watson tree. Using Markov's inequality, we see that
\begin{align}\label{eq:vertexMarkovbound}
\GW(  |x \in V(\tilde{\bT}) \colon \abs{x}=H_n|  \ge  \tilde{\mathfrak m}^{H_n}2^{2H_n}  )
\le 2^{-2H_n}\, .
\end{align}
Note that each site $x$ at level $H_n$ in $\tilde{\bT}$ is mapped to a generation given as the sum of $H_n$-many independent Geometric-$(1-p_1)$-distributed random variables $(G_i)_{i \in [H_n]}$. Using Chebyshev's inequality, we see that
\begin{align} \label{eq:geoLDPbound}
 P\Big( \sum_{i=1}^{H_n} G_i \ge M_n\Big) \leq \eup^{-tM_n}\left(\frac{1-p_1}{1-p_1\eup^t} \right)^{H_n} =  \left(\frac{1+p_1}{2p_1}\right)^{-M_n}2^{H_n}
\end{align} when we set $t=\log(\frac{1+p_1}{2p_1})$. Fix some site $\tilde{x} \in  \cZ_{M_n}$. Now condition on the number of sites at level $H_n$ in $\tilde{\bT}$ and apply \eqref{eq:vertexMarkovbound} together with a union bound to see that
\begin{align*}
\GW\Big(  \exists x \in  \cZ_{M_n} &\colon \sum_{v\in [o, x)} \mathds{1}\{ \deg(v) \geq 3 \} \le \ce{ H_n}  \Big) \\
&\leq   \tilde{\mathfrak m}^{H_n}2^{2H_n}  \GW\Big(  \sum_{v\in [o, \tilde{x})} \mathds{1}\{ \deg(v) \geq 3 \} \le \ce{ H_n}   \Big) + 2^{-2H_n} \\
&\leq \tilde{\mathfrak m}^{H_n}2^{2H_n}  P\Big( \sum_{i=1}^{H_n} G_i \ge M_n\Big) + 2^{-2H_n} \leq 2^{-2H_n+1}
\end{align*}
using \eqref{eq:geoLDPbound} and the definition of $M_n$ for the last two steps.
\end{proof}

\subsection{Entering times of the particles in the tree}

We now define an inverse for the current. For any $n \in \N$, $m \in \N_0$, we set 
\be\label{def:TauUp}
\tau_{m}^{n} := \inf \{ t \geq 0 : \cJ_{m}(t) \ge n \}\,  .
\ee
In words, $\tau_{m}^{n}$ gives the time that the aggregated current across generation $m$ becomes $n$, or equivalently, precisely $n$ particles reached $\cZ_m$. 
Hence, the following two events are equal:
\begin{equation*}
\{ \tau_{m}^{n} \le t \} = \{ \cJ_{m}(t) \ge n \} \, .
\end{equation*}

The main goal of this section is to give a bound on the first time $\tau_{0}^{n}$ at which $n$ particles have entered the tree. Note that this random time $\tau_{0}^{n}$ depends on the underlying tree as well as on the evolution of the exclusion process. 

\begin{proposition}\label{pro:EnteringParticles} Fix a number of particles $n$. Consider a supercritical Galton--Watson tree without extinction and assume that \hyperref[def:ExponentialScaling]{\normalfont(ED)} holds for some constant $c_{\textup{low}}$. Recall $c_{o}$ from \eqref{def:cmu}. There exists a constant $c>0$ such that 
\begin{equation}
\GW\left(  P_{\bT}\big (\tau_{0}^{n} <  c n^{c_{\textup{low}}c_{o}+1}\log n \big) \geq 1- \frac{2}{n^2} \right) \geq 1- \frac{2}{n^2}
\end{equation} 
for all $n$ sufficiently large.
\end{proposition}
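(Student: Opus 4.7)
The plan is to identify a cutoff generation $M_n$ of order $c_{o}\log n$ at which the tree has at least $n$ descendants of the root (with high $\GW$-probability), to use \hyperref[def:ExponentialScaling]{\normalfont(ED)} to obtain a uniform lower bound of order $n^{-c_{\textup{low}} c_{o}}$ on the jump rates within the first $M_n$ generations, and then to bound $\tau_{0}^{n}$ by comparing $(\eta_t)$ with a slower TASEP on the truncated tree whose entering times can be controlled by an LPP-type estimate.

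First I fix $M_n$. When $d_{\min}\ge 2$, deterministically $|\cZ_{\ell}|\ge d_{\min}^{\ell}$, so taking $M_n:=\lceil\log n/\log d_{\min}\rceil=\lceil c_{o}\log n\rceil$ forces $|\cZ_{M_n}|\ge n$. When $d_{\min}=1$, I invoke Lemma~\ref{lem:coreLD} with $H_n=\lceil\log_{2} n\rceil$ and $M_n:=\lceil\alpha H_n\rceil$: on a $\GW$-event $A_n$ of probability at least $1-2/n^{2}$, every $x\in\cZ_{M_n}$ has at least $H_n$ vertices of degree $\ge 3$ on $[o,x)$, and hence $|\cZ_{M_n}|\ge 2^{H_n}\ge n$. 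In both cases $M_n\le c_{o}\log n+O(1)$, so \hyperref[def:ExponentialScaling]{\normalfont(ED)} yields
$$ \tilde r\,:=\,\min\{r_{x,y}:|x|<M_n,\,(x,y)\in \Er\}\;\ge\;\kappa\,\eup^{-c_{\textup{low}} M_n}\;\ge\;\kappa\,n^{-c_{\textup{low}} c_{o}-o(1)}. $$

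Next I pass to an auxiliary TASEP $(\eta'_{t})_{t\ge 0}$ on $\bT$ obtained by replacing every internal rate $r_{x,y}$ with $|x|<M_n$ by $\tilde r$ (rates at generations $\geq M_n$ are irrelevant for $\tau_{0}^{n}$ and may also be set to $\tilde r$). A canonical coupling using, for each edge, a rate-$r_{x,y}$ Poisson clock with an independent uniform mark under which $\eta'$ accepts only those rings whose marks lie below $\tilde r/r_{x,y}$, shows that $(\eta')$ sees a sub-collection of the attempted jumps of $(\eta)$; this can only lengthen the intervals during which the root is occupied, so $\tau_{0}^{n}\le \tau_{0}^{\prime n}$. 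To bound $\tau_{0}^{\prime n}$, I rescale time by $\tilde r$ so that $\eta'$ becomes a TASEP on the first $M_n$ generations of $\bT$ with unit internal rates and reservoir rate $\lambda/\tilde r$. Since $|\cZ_{M_n}|\ge n$ on $A_n$, the first $n$ particles fit at distinct sites below generation $M_n$, so the root is never permanently blocked. Dominating by a one-dimensional TASEP on a ray of length $M_n$ with a reservoir, or equivalently by the inhomogeneous LPP model developed in Section~\ref{sec:LPP}, yields $\tilde r\cdot \tau_{0}^{\prime n}\le c(n+M_n)\log n$ with $P_{\bT}$-probability at least $1-2/n^{2}$; inverting the scaling gives $\tau_{0}^{\prime n}\le c'\,n^{c_{\textup{low}} c_{o}+1}\log n$, and intersecting with $A_n$ completes the proof.

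The principal obstacle is the rate comparison: varying rates (rather than initial configurations, where TASEP attractiveness would apply directly) makes the monotonicity of the entering time in the jump rates subtle and must be verified carefully via the above coupling. Once a suitable dominating 1D TASEP or LPP rectangle is fixed, standard sub-Gaussian concentration for sums of exponentials — or in the LPP formulation, the fluctuation estimates available for inhomogeneous last-passage times — delivers the required polynomial tail.
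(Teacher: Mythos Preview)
Your approach is genuinely different from the paper's, and it has real gaps.

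The paper never compares rates or invokes the LPP machinery at this stage. Instead it works with the \emph{depth of traffic} $D_o(t)$ at the root, i.e.\ the distance to the nearest generation containing an empty site. On the $\GW$-event where the first $M_n$ generations contain at least $n$ vertices (which is your event $A_n$), at most $n$ particles are present before time $\tau_0^n$, so $D_o(t)\le M_n+1$ throughout. Given this, a hole always sits within generation $M_n+1$, and the time $\psi_o$ for such a hole to travel to the root is stochastically bounded by a sum of $M_n+1$ exponentials with the worst parameter $\kappa\, e^{-c_{\textup{low}}(M_n+1)}$; a Cram\'er estimate gives $\psi_o\lesssim n^{c_{\textup{low}}c_o}\log n$ with the required probability. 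Telescoping $\tau_0^n=\sum_{i}(\tau_0^i-\tau_0^{i-1})$ and bounding each increment by $\psi_o(\tau_0^{i-1})+\lambda^{-1}\omega_i$ yields the bound.

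Your route has two substantive problems. First, the rate--monotonicity step is not established by the thinning coupling you describe. Removing edge rings does \emph{not} pointwise lengthen the occupation intervals of the root: already on a three-site segment one can have $\eta_t(o)=1$ while $\eta'_t(o)=0$ under this coupling. More importantly, on a tree the thinning changes which branch each particle selects, so particles in $\eta$ and $\eta'$ may sit in different subtrees; one can easily arrange that $|z_t^{\prime i}|>|z_t^i|$ for the first particle. Whether the \emph{cumulative} entry count is monotone is a separate (and non-trivial) question that you do not address.

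Second, the appeal to the LPP model of Section~\ref{sec:LPP} is circular: Lemma~\ref{lem:ExpoLPP} explicitly restricts paths to $A_{\sM_n}$ and conditions on the disentanglement event $D_n$, and the high probability of $D_n$ (Theorem~\ref{thm:DisentanglementGWT}) is proved \emph{using} the present proposition (via Lemma~\ref{lem:HowManyEnter}). Without the restriction to $A_{\sM_n}$, the LPP comparison picks up weights $(r^{\min}_\ell)^{-1}$ for $\ell$ up to order $n$, not $M_n$, which under \hyperref[def:ExponentialScaling]{\normalfont(ED)} only yields an exponential bound. Your assertion that ``rates at generations $\ge M_n$ are irrelevant for $\tau_0^n$'' is exactly the depth-of-traffic statement that needs to be proved; it is not a consequence of the rate comparison.
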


In order to show Proposition \ref{pro:EnteringParticles}, we require a bit of setup.  Let $\cZ_m^{(x)}$  be the $m^{\text{th}}$ generation of the subtree $\bT_x$ rooted at $x$. For a tree $(\bT,o) \in \mathcal{T}$ and a site $x$, we say that the exclusion process on $\bT$ has \textbf{depth of traffic} $D_x(t) \in \N_0$ with
\be
D_x(t) = \inf\{ m \ge 0: \eta_t(z) = 0, \textrm{ for some } z \in \cZ_m^{(x)}\}\, ,
\ee
at site $x$ at time $t$. In words, $D_x(t)$ is the distance to the first generation ahead of $x$ which contains an empty site. Note that for any fixed $x$, the process $(D_x(t))_{t \geq 0}$ is a non-negative integer process. It takes the value $0$ when $\eta_t(x) = 0$ and it is positive when $\eta_t(x) = 1$. Note that $(D_x(t))_{t \geq 0}$ can go down only in steps of one, unless at $0$ where it jumps to some positive integer. The following lemma gives a bound on the depth of traffic at the root in Galton--Watson trees. 

\begin{lemma}\label{lem:depthboundRoot} Let $H_n=\log_2 n$ and recall $M_n$ and $\tilde{\mathfrak m}$ from Lemma \ref{lem:coreLD}. Then 
\begin{equation}
\GW\Big( P_{\bT}\big( D_o(t) \leq M_n+1 \text{ for all } t \leq \tau_{0}^{n} \big) = 1 \Big) \ge 1 - \frac{2}{n^{2}}\, .
\end{equation}
\end{lemma}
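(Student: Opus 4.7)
The strategy is to reduce this dynamic statement about the depth of traffic to a static combinatorial question about the size of the tree's first $M_n+1$ generations, and then apply Lemma~\ref{lem:coreLD}.

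First, I would make the following deterministic observation. For every $t \leq \tau_{0}^{n}$ we have $\cJ_{o}(t) \leq n$, so at any such time at most $n$ sites of $\bT$ are occupied by a particle. Since by definition $D_o(t) \leq M_n+1$ is equivalent to the existence of an empty vertex in the set $\bigcup_{m=0}^{M_n+1}\cZ_m$, a pigeonhole argument shows that the bound $D_o(t)\leq M_n+1$ holds on every realization as soon as
\[
N := \Big|\bigcup_{m=0}^{M_n+1}\cZ_m\Big| \ge n+1\, .
\]
Thus it suffices to prove $\GW(N \ge n+1) \ge 1-2/n^2$.

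Next, I would take $H_n = \log_2 n$ and invoke Lemma~\ref{lem:coreLD}. On the event appearing there, which has $\GW$-probability at least $1-2^{-2H_n+1} = 1 - 2/n^2$, every vertex $x\in\cZ_{M_n}$ has at least $\lceil H_n\rceil$ ancestors of degree $\geq 3$ in $\bT$. On this event, consider the finite subtree of $\bT$ obtained by keeping all vertices at distance $\leq M_n$ from $o$ (which is exactly the set of vertices lying on some root-to-$\cZ_{M_n}$ path since $p_0=0$), and contract every internal vertex that has only one child. The resulting rooted tree has the property that each internal vertex has at least two children and each leaf, which corresponds to a site of $\cZ_{M_n}$, is at depth at least $\lceil H_n\rceil$. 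A straightforward induction on depth then yields at least $2^{\lceil H_n\rceil}\ge n$ leaves, giving $|\cZ_{M_n}|\ge n$, and hence $N \ge |\cZ_0| + |\cZ_{M_n}| \ge n+1$, as desired.

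The main step is the translation from Lemma~\ref{lem:coreLD} into the lower bound $|\cZ_{M_n}| \ge n$ via the contraction-and-counting argument; everything else is elementary. The one point where care is needed is making sure $p_0=0$ is used so that every vertex in the first $M_n$ generations actually lies on some path to a vertex of $\cZ_{M_n}$, which guarantees that the contracted tree is well-defined and its leaves are exactly $\cZ_{M_n}$ rather than some smaller set.
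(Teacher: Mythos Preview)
Your proposal is correct and follows essentially the same approach as the paper: reduce the depth-of-traffic bound to the deterministic statement that the tree up to level $M_n$ has more than $n$ vertices, and obtain that from Lemma~\ref{lem:coreLD} with $H_n=\log_2 n$. The paper's proof simply asserts this last implication, whereas you spell out the contraction-and-counting argument showing $|\cZ_{M_n}|\ge 2^{\lceil H_n\rceil}\ge n$; this is exactly the step the paper leaves implicit.
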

In words, this means that with probability at least $1-2n^{-2}$, the depth at the root is smaller than $M_n$ whenever no more than $n$ particles have entered the tree.
\begin{proof}[Proof of Lemma \ref{lem:depthboundRoot}] Observe that the root can only have depth $\ell$ when all vertices until level $\ell$ are occupied and that there are at most $n$ particles until time $\tau^{n}_{0}$. Note that Lemma~\ref{lem:coreLD} guarantees, with our choice of $H_n$, that with probability at least $1-2n^{-2}$, the tree up to generation $M_n$ contains more than $n$ sites. Hence, there is at least one empty site until generation $M_n$ by the definition of $\tau^{n}_{0}$. 
\end{proof}

Next, we give a bound on the renewal times of the process $(D_o(t))_{t \geq 0}$. For $t \geq 0$ and $x\in V$,  we define the first \textbf{availability time} $\psi_x(t)$ after time $t$ to be
\[
\psi_x(t)= \inf\{ s > t: D_x(s)=0 \} -t \geq 0 \, .
\]
This is the time it takes until $x$ is empty, observing the process from time $t$ onward. 

\begin{lemma} \label{lem:depth-ren-root} Fix a tree $(\bT,o)\in \mathcal{T}$ with root $o$, and assume that  \hyperref[def:ExponentialScaling]{\normalfont(ED)} holds for some $c_{\textup{low}}$, $\kappa >0$. Moreover, let $t=t(\ell)\geq 0$ satisfy $0 \le D_o(t) \le \ell$. Then  for all $c>0$
\begin{equation}\label{eq:RenewalBound}
P_{\bT}\Big( \psi_o(t) > (1+c) (\ell+1) \kappa^{-1}\eup^{c_{\textup{low}} (\ell+1)} \Big) \leq  \exp\Big(-(c-\log(1+c)) \ell \Big) \, .
\end{equation} 
\end{lemma}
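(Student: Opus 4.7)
The plan is to stochastically dominate $\psi_o(t)$ by a sum of $\ell+1$ i.i.d.\ exponentials with rate $\kappa\eup^{-c_{\textup{low}}(\ell+1)}$ and then finish with a standard Chernoff bound for such sums.

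To set up the cascade, I would let $k := D_o(t) \in \{0,1,\ldots,\ell\}$ (the case $k=0$ gives $\psi_o(t)=0$ and the inequality is immediate, so assume $k \ge 1$), and fix a path $o = x_0, x_1, \ldots, x_k$ realizing the infimum in the definition of $D_o(t)$, so that $x_0, \ldots, x_{k-1}$ are occupied and $x_k$ is empty at time $t$. I would then define stopping times $T_0 := t$ and, for $j = 1, \ldots, k$,
\[
T_j := \inf\{ s \ge T_{j-1} : \eta_s(x_{k-j}) = 0 \},
\]
so that $\psi_o(t) \le T_k - t$. The goal is then to show that the increments $T_j - T_{j-1}$ are stochastically dominated by independent exponentials of rate $r_{x_{k-j}, x_{k-j+1}}$.

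This stochastic domination step is the heart of the argument and the step I expect to be the main obstacle. The key geometric input is that the tree is directed, so $x_{k-j+1}$ has $x_{k-j}$ as its unique parent. Consequently, once $x_{k-j+1}$ becomes empty at $T_{j-1}$, the exclusion rule forbids its refilling except by $x_{k-j}$ itself, and this can only occur if $x_{k-j}$ jumps to $x_{k-j+1}$, an event that simultaneously empties $x_{k-j}$ and hence achieves $T_j$. Therefore the first ring after $T_{j-1}$ of the Poisson clock on the edge $(x_{k-j}, x_{k-j+1})$ that finds $x_{k-j}$ occupied necessarily fires a successful jump, while any successful jump of $x_{k-j}$ to an off-path empty child only makes $T_j$ occur sooner. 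Applying the strong Markov property at $T_{j-1}$ and the independence of the edge Poisson clocks then yields the required stochastic domination, with the bounding exponentials independent across $j$. Assumption \hyperref[def:ExponentialScaling]{\normalfont(ED)} together with $k-j \le \ell$ gives $r_{x_{k-j}, x_{k-j+1}} \ge \kappa\eup^{-c_{\textup{low}}(\ell+1)}$, and enlarging the sum from $k$ to $\ell+1$ i.i.d.\ terms at this common rate only makes it stochastically larger.

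A standard Chernoff computation then closes the argument: for i.i.d.\ $E_1, \ldots, E_m \sim \textup{Exp}(\lambda)$, optimizing the moment generating function bound $P(S_m \ge a) \le (\lambda/(\lambda-t))^m \eup^{-ta}$ at $t = c\lambda/(1+c)$ yields
\[
P\Bigl( \sum_{j=1}^m E_j > \tfrac{(1+c)m}{\lambda} \Bigr) \le \exp\bigl(-m(c - \log(1+c))\bigr).
\]
Applied with $m = \ell+1$ and $\lambda = \kappa \eup^{-c_{\textup{low}}(\ell+1)}$, and using $\ell+1 \ge \ell$, this yields precisely the stated tail bound on $\psi_o(t)$.
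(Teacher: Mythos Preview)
Your proof is correct and follows essentially the same approach as the paper: bound $\psi_o(t)$ by the time a hole at level $k\le\ell$ takes to travel down to the root along a fixed ray, dominate each step by an independent exponential with the worst rate $\kappa\eup^{-c_{\textup{low}}(\ell+1)}$ via \hyperref[def:ExponentialScaling]{\normalfont(ED)}, and finish with the Chernoff/Cram\'er bound for sums of i.i.d.\ exponentials. The paper states the hole--cascade step in one sentence, whereas you spell out the stopping times $T_j$ and the reason the directed tree structure prevents $x_{k-j+1}$ from being refilled except by $x_{k-j}$; this added detail is correct and makes the stochastic domination transparent.
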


\begin{proof}[Proof of Lemma \ref{lem:depth-ren-root}] 
Since $D_o(t) \le \ell$, there exists a site $y$ with $|y| \leq \ell+1$ and $\eta_t(y)=0$, such that the ray connecting $y$ to $o$ is fully occupied by particles. 
Thus,  $\psi_o(t)$ is bounded by the time a hole at level $\ell+1$ needs to travels to $o$. By \hyperref[def:ExponentialScaling]{\normalfont(ED)},
\[
 \psi_o(t)   \leq  \kappa^{-1}\exp(c_{\textup{low}} (\ell+1)) \sum_{i = 1}^{\ell+1} \om_{i}
\] holds for independent Exponential-$1$-distributed random variables $(\om_i)_{i\in [l+1]}$. Now 
\begin{align*}
 P\left( \sum_{i = 1}^{\ell+1} \om_i > (1+c) (\ell+1)  \right) \le \exp\left({- (c - \log(1+c))\ell}\right) 
\end{align*} by using Cram\'er's theorem. This yields an upper bound on the left-hand side in  \eqref{eq:RenewalBound}; see Theorem 2.2.3 \cite{DZ:LargeDeviations}.
\end{proof}
\begin{proof}[Proof of Proposition \ref{pro:EnteringParticles}] Recall that a particle can enter the tree if and only if the root is empty, and that particles are created at the root at rate $\lambda$. Thus
\begin{equation}\label{eq:TelescopicSum}
\tau_0^{i} - \tau_0^{i-1} \leq \psi_o(\tau_0^{i-1}) + \lambda^{-1} \om_i
\end{equation} 
holds for all $i \in [n]$ for some sequence $(\om_i)_{i \in [n]}$ of i.i.d.\ Exponential-$1$-distributed random variables. Recall  \eqref{def:cmu} where for $d_{\min}>1$, we take $c_o$ such that $M_n = c_{o}\log n$, and set $c_o=1/\log d_{\min}$ otherwise. Rewriting $\tau_0^{n}$ as a telescopic sum yields
\begin{align*}
P_{\bT}&( \tau_0^{n} > c n^{c_{\textup{low}}c_{o}+1} \log n  ) \leq P_{\bT}( \exists i\in [n] \colon \tau_0^{i} - \tau_0^{i-1}  > c n^{c_{\textup{low}}c_{o}} \log n  ) \notag\\
& \leq n \max_{i \in [n]}  P_{\bT}\left( \psi_o(\tau_0^{i-1}) > (c-3\lambda^{-1}) n^{c_{\textup{low}}c_{o}} \log n  \right) + n  P_{\bT}(  \om_1 > 3\log n )  \, .
\end{align*}
Together with Lemma \ref{lem:depthboundRoot} and Lemma \ref{lem:depth-ren-root} for $\ell=c_{o}\log n$, we obtain that 
\begin{align*}
 n \max_{i \in [n]}  P_{\bT}\left( \psi_o(\tau_0^{i-1}) > (c-3\lambda^{-1}) n^{c_{\textup{low}}c_{o}} \log n  \right) 
 &+ n  P_{\bT}(  \om_1 > 3\log n)  \leq  \frac{1}{n^2} + \frac{1}{n^2}
\end{align*}
holds for some $c>0$  with $\GW$-probability at least $1-2n^2$ for all $n$ sufficiently large.
\end{proof}

\subsection{Proof of the disentanglement theorem}

For the proof of Theorem \ref{thm:DisentanglementGWT} we have the following strategy. We wait until all $n$ particles have entered the tree. We then consider a level in the tree which was reached by no particle yet. For every vertex at that level as a starting point, we use the a priori bound on the disentanglement from Proposition \ref{lem:Early-sep}; see also Figure \ref{fig:TASEPDnMn}.  \\

\begin{figure}
\centering
\begin{tikzpicture}[scale=0.85, >=latex]

\draw[draw=none,fill=gray!20] (-1,-2) rectangle (1,2);

\draw[draw=none,fill=gray!20] (5,-2) rectangle (7,2);

	\node[shape=circle,scale=1.2,draw,fill=white] (Y2) at (-6,0){} ;
	\node[shape=circle,scale=1.2,draw,fill=white] (Z2) at (-3,0){} ;
	\node[shape=circle,scale=1.2,draw,fill=white] (A2) at (0,0){} ;
 	\node[shape=circle,scale=1.2,draw,fill=white] (B2) at (3,1){} ;
	\node[shape=circle,scale=1.2,draw,fill=white] (C2) at (3,-1){};
	\node[shape=circle,scale=1.2,draw,fill=white] (D2) at (6,0.5){} ;
	\node[shape=circle,scale=1.2,draw,fill=white] (E2) at (6,1.5){} ;
	\node[shape=circle,scale=1.2,draw,fill=white] (F2) at (6,-0.5){} ;
	\node[shape=circle,scale=1.2,draw,fill=white] (I2) at (6,-1.5){} ;
	\node[shape=circle,scale=1.2,draw,fill=white] (G2) at (0,-1.2){} ;	
	\node[shape=circle,scale=1.2,draw,fill=white] (H2) at (0,1.2){} ;	
	
	\node[shape=circle,scale=0.9,draw,fill=red] (Z3) at (0,-1.2){} ;
	\node[shape=circle,scale=0.9,draw,fill=red] (A3) at (0,1.2){} ;
 	\node[shape=circle,scale=0.9,draw,fill=red] (B3) at (6,1.5){} ;
	\node[shape=circle,scale=0.9,draw,fill=red] (C3) at (3,1){};
	
	\node (d) at (6, 0.5) {};
	\draw[->, thick] (C3)  to [out=-90,in=-120] (d);	

	\draw[thick] (Y2) to (Z2);
	\draw[thick] (A2) to (Z2);			
	\draw[thick] (A2) to (B2);		
	\draw[thick] (A2) to (C2);		
	\draw[thick] (B2) to (D2);		
	\draw[thick] (B2) to (E2);	
	\draw[thick] (C2) to (F2);	
	\draw[thick] (Z2) to (G2);	
	\draw[thick] (Z2) to (H2);	
	
	\draw[thick, dashed] (E2) to (7,1.7);			
	\draw[thick, dashed] (E2) to (7,1.3);	

	\draw[thick] (C2) to (I2);	
	
	\draw[thick, dashed] (D2) to (7,0.5);	
	
	\draw[thick, dashed] (F2) to (7,-0.7);		
	\draw[thick, dashed] (F2) to (7,-0.3);		
	
	\draw[thick, dashed] (I2) to (7,-1.5);		
	
	\draw[thick, dashed] (G2) to (1,-1.2);		
	\draw[thick, dashed] (H2) to (1,1.5);	
	\draw[thick, dashed] (H2) to (1,0.9);	
	
	\draw[thick, dashed] (Y2) to (-4.5,0.4);	
	\draw[thick, dashed] (Y2) to (-4.5,-0.4);

		\node[scale=0.9]  at (0,-2.4){$\sD_n=2$};
		\node[scale=0.9]  at (3,-2.4){$3$};
		\node[scale=0.9]  at (-3,-2.4){$1$};
		\node[scale=0.9]  at (6,-2.4){$\sM_n=4$};
		\node[scale=0.9]  at (-6,-2.4){$0$};	

								\node[scale=1]  at (-5.9,-0.6){$o$};
\node[scale=1]  at (-6.85,0.7){$\lambda$};								
								
		\draw [->,line width=1] (-7.5,0) to [bend right,in=135,out=45] (Y2);								

\end{tikzpicture}
\caption[TASEP on trees and the generations $\sD_n$ and $\sM_n$]{\label{fig:TASEPDnMn}Visualization of the TASEP on trees and the different generations $\sD_n$ and $\sM_n$ involved in the proof for $n=4$. The particles are drawn in red. Note that it depends on the next successful jump of the particle at generation $3$, if the first $4$ particles are disentangled at generation $\sM_n=4$, i.e.\ they will disentangle if the particle jumps at the location indicated by the arrow.
}
\end{figure}
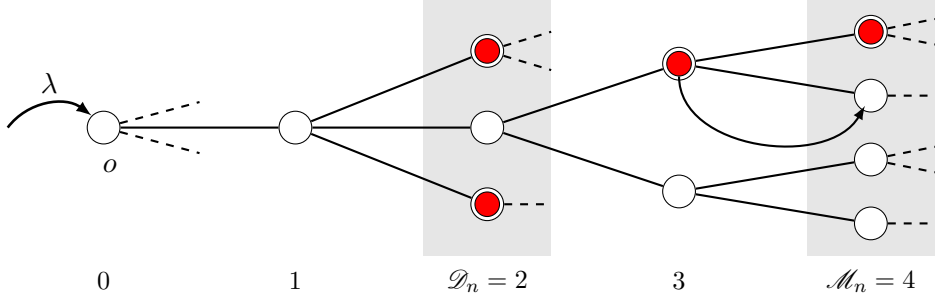

Starting from the empty initial configuration, we study the maximal generation which is reached until time $\tau_0^{n}$. The next lemma gives an estimate on the degrees of the vertices along the possible trajectories of the particles.

\begin{lemma} \label{lem:FurthestGenerationBound}
Let $(L_n)_{n \in \N}$ be an integer sequence such that $L_n \ge \tilde{c} \log n$ holds for some $\tilde{c} > 0$ and $n\in \N$. Then we can find a sequence $(\delta_n)_{n \in \N}$ with $\delta_n$ tending to $0$ with $n$ such that the following statement holds with $\GW$-probability at least $1-n^{-2}$ for all $n$ large enough:  for every site $x\in \cZ_{\lceil L_n(1+\delta_n) \rceil}$, there exists a site $y \leq x$, i.e.\ $y$ is on a directed path from the root to $x$, with $|y|\geq L_n$ and $\deg(y) \leq \log\log n$.
\end{lemma}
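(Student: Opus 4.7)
The plan is to bound the expected number of ``bad'' vertices at generation $m_n := \lceil L_n(1+\delta_n)\rceil$ and then apply Markov's inequality. Call $x \in \cZ_{m_n}$ \emph{bad} if every $y$ on the path $[o,x]$ with $|y|\geq L_n$ satisfies $\deg(y) > \log\log n$; the complement of the event in the lemma is precisely the existence of such a bad $x$. Since $L_n \geq \tilde c \log n \to \infty$, no such $y$ is the root for $n$ large, so the threshold $\deg(y) > \log\log n$ reduces to ``$y$ has at least $\lceil \log\log n\rceil$ children''. Letting $\mu$ denote a generic offspring random variable, set
\[
q_n := \P(\mu \geq \lceil \log\log n \rceil), \qquad \tilde q_n := \E\bigl[\mu\,\mathds 1\{\mu \geq \lceil \log\log n \rceil\}\bigr],
\]
both of which tend to $0$ as $n \to \infty$ by Markov's inequality and dominated convergence, respectively, using only $\mathfrak m < \infty$ from \eqref{def:expectedOffspings}.

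The key step is a first-moment computation. In a Galton--Watson tree, since offspring counts of distinct vertices are i.i.d., one checks inductively that the quantity
\[
U_k := \E\!\left[\sum_{x \in \cZ_k} \prod_{y \in [o,x]} \mathds 1\{y \text{ has } \geq \lceil\log\log n\rceil \text{ children}\}\right]
\]
satisfies $U_0 = q_n$ and the recursion $U_k = \tilde q_n\, U_{k-1}$ (condition on the root's offspring count and use that its children spawn independent copies of the tree), hence $U_k = \tilde q_n^{\,k} q_n$. Conditioning on $\cZ_{L_n}$ and using that the subtrees rooted at its vertices are i.i.d.\ copies of the whole GW tree, together with $\E[|\cZ_{L_n}|] = \mathfrak m^{L_n}$, yields
\[
\E\bigl[\#\{\text{bad } x \in \cZ_{m_n}\}\bigr] \;\leq\; \mathfrak m^{L_n}\, \tilde q_n^{\,m_n - L_n}\, q_n.
\]

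The remaining step is to choose $\delta_n \to 0$ making this expectation at most $n^{-2}$. I would take, for instance, $\delta_n := |\log \tilde q_n|^{-1/2}$ (truncated by $1$ for small $n$ with $\tilde q_n \geq 1$), so that $\delta_n \to 0$ while $\delta_n |\log \tilde q_n| = \sqrt{|\log \tilde q_n|} \to \infty$. Since $m_n - L_n \geq L_n \delta_n$, the bound becomes at most
\[
\exp\bigl(L_n(\log \mathfrak m - \delta_n |\log \tilde q_n|)\bigr) \;\leq\; \exp\bigl(-\tfrac12\, L_n \delta_n |\log \tilde q_n|\bigr)
\]
for $n$ large; combined with $L_n \geq \tilde c \log n$ and $\delta_n |\log \tilde q_n| \to \infty$, this is eventually at most $n^{-2}$, and Markov's inequality finishes the proof.

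The main obstacle I anticipate is the delicate balancing in the choice of $\delta_n$: the branching factor $\mathfrak m^{L_n}$ at level $L_n$ must be killed by the decay $\tilde q_n^{L_n\delta_n}$ along each bad ancestral line, forcing $\delta_n |\log \tilde q_n| \to \infty$, while the lemma requires $\delta_n \to 0$. Both are possible precisely because $|\log \tilde q_n| \to \infty$, a free consequence of $\mathfrak m < \infty$, so no stronger moment assumption on $\mu$ is needed. The constructed $\delta_n$ depends only on $\mu$ and is uniform in $\tilde c$, since $\delta_n |\log \tilde q_n| \to \infty$ eventually exceeds any fixed constant of the form $\log \mathfrak m + 2/\tilde c$.
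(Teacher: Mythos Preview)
Your proof is correct and follows essentially the same approach as the paper: a first-moment bound driven by the truncated offspring mean $\tilde q_n$ (which the paper calls $\bar{\mathfrak m}_n$), together with a choice of $\delta_n$ balancing the branching factor $\mathfrak m^{L_n}$ against the decay $\tilde q_n^{L_n\delta_n}$. Your packaging is slightly more economical---a single first-moment computation on bad vertices and one application of Markov---whereas the paper first bounds $|\cZ_{L_n}|$ by Markov, then reinterprets the bad-ray event as survival of a subcritical Galton--Watson tree with mean $\bar{\mathfrak m}_n$, and finishes with a union bound over the subtrees; the underlying idea is identical.
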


\begin{proof} 
It suffices to consider the case where the offspring distribution has infinite support. Using Markov's inequality, we see that with $\GW$-probability at least $1-(2n)^{-2}$, the Galton--Watson tree contains at most $(2n)^2 \mathfrak{m}^{L_n}$ sites at generation $L_n$. We denote by $(\bT_i)_{i \in [\abs{\cZ_{L_n}}]}$ the trees with roots $o_i$ attached to these sites. 
We claim that 
with $\GW$-probability at least $1-(2n)^{-4} \mathfrak{m}^{-L_n}$, every ray $[o_{i},x]$ for $x$ at level $\ce{ \delta_n L_n}$ of $\bT_i$ contains at least one vertex which has at most $\log\log n$ neighbors. To see this, we use a comparison to a different offspring distribution.
Recall that the mean of the offspring distribution is $\mathfrak{m} < \infty$, and that $p_i$ is the probability of having precisely $i$ offspring. We define another offspring distribution for weights $(\bar{p}_i)_{i \in \{ 0,1,\dots\}}$, where
\begin{equation*}
\bar{p}_i := \begin{cases} p_i & \text{ for } i > \log\log n \\
1 - \sum\limits_{i=1}^{\fl{\log\log n}} p_i & \text{ for } i = 0 \\
0, & \text{ else. }
\end{cases}
\end{equation*} Let $\bar{\mathfrak{m}}_n$ denote the mean of the distribution given by $(\bar{p}_i)_{i \in \{ 0,1,\dots\}}$, and note that $\bar{\mathfrak{m}}_n \rightarrow 0$ holds when $n \rightarrow \infty$. Observe that the probability that all rays up to generation $\ce{ \delta_n L_n}$ contain at least one vertex of degree at most $\log\log n$ is equal to the probability that the tree with offspring distribution drawn according to $(\bar{p}_i)_{i \in \{ 0,1,\dots\}}$ dies out until generation $\ce{ \delta_n L_n}$. Using a standard estimate for Galton--Watson trees, this probability is at least $1-\bar{\mathfrak{m}}_n^{\ce{ \delta_n L_n}}$. 
Set 
\begin{equation}\label{eq:deltaNSequence}
\delta_n = - \frac{2L_n + 4 \log_\mathfrak m(2n)}{ L_n \log_{\mathfrak m} \bar{\mathfrak m}_n }
\end{equation} 
and note that $\delta_n \rightarrow 0$ holds when $n \rightarrow \infty$. From this, and $L_n \ge \tilde{c} \log n$ for some $\tilde{c} >0$,  for all $n$ large enough
\begin{equation*}
\bar{\mathfrak{m}}_n^{\ce{ \delta_n L_n}} \leq (2n)^{-4} \mathfrak{m}^{-L_n}
\end{equation*}
follows. We conclude with a union bound over all trees $\bT_i$ at level $L_n$.
\end{proof}
Next, for all $t \ge 0$, we let $\sS (t)$ denote the  generation 
\[
\sS (t) = 
\max \{ \ell \geq 0 : \cJ_{\ell}(t) \geq 1 \}
\]    
when starting from the configuration where all sites are empty. 
\begin{lemma} \label{lem:HowManyEnter} Recall $(\sD_n)$ from \eqref{def:FurthestGeneration} and $(\delta_n)$ from \eqref{eq:deltaNSequence}.  Then $\P$-almost surely
\begin{equation}\label{eq:SDestimate}
\sS (\tau_0^{n}) \leq  (1+\delta_n)\sD_n  
\end{equation}  for all $n$ sufficiently large.
\end{lemma}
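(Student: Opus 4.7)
The plan is to establish summability in $n$ of the bad event $\{\sS(\tau_0^n) > (1+\delta_n)\sD_n\}$ and then apply Borel--Cantelli. First I would use Proposition~\ref{pro:EnteringParticles} to restrict attention to the event $\{\tau_0^n \leq T_n\}$ with $T_n := cn^{c_{\textup{low}}c_o+1}\log n$; its complement has $\P$-probability $O(n^{-2})$, which is already summable.

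On the event $\{\tau_0^n \leq T_n\}$, for any particle to reach a vertex $x$ at generation $M := \lceil(1+\delta_n)\sD_n\rceil$, it must traverse the $k := M - \sD_n$ edges of the path $[o,x]$ lying beyond generation $\sD_n$. By the very definition of $\sD_n$ in~\eqref{def:FurthestGeneration}, each such edge carries rate at most $r_\star := n^{-(2+c_{\textup{low}}c_o)}\log^{-3}n$, so $r_\star T_n \leq c/(n\log^2 n)$. Since the exclusion rule only slows particles down, ignoring it (i.e.\ coupling with a free walk) yields that the time to traverse this $k$-edge segment stochastically dominates $\sum_{i=1}^k E_i / r_\star$ with $E_i$ i.i.d.\ Exp$(1)$, whence
\[
P_{\bT}\bigl([o,x] \text{ is traversed by time } T_n\bigr) \leq \frac{(r_\star T_n)^k}{k!} \leq \frac{c^k}{(n\log^2 n)^k\, k!}.
\]
A union bound over the candidate endpoints at generation $M$, combined with the first-moment Galton--Watson estimate $|\cZ_M| \leq n^2\mathfrak{m}^M$ (which holds with $\GW$-probability $1-O(n^{-2})$ by Markov's inequality, in the spirit of \eqref{eq:vertexMarkovbound}), then dominates $\P(\sS(T_n) \geq M)$ by $n^2\mathfrak{m}^M\, c^k/((n\log^2n)^k\, k!) + O(n^{-2})$.

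The main obstacle is verifying that this bound is summable in $n$ with the specific $\delta_n$ coming from \eqref{eq:deltaNSequence}. Taking logarithms, summability boils down to showing that
\[
(1+\delta_n)\sD_n\log\mathfrak{m} - \delta_n\sD_n\log n \longrightarrow -\infty
\]
faster than $-(2+\varepsilon)\log n$; equivalently, $\delta_n\log n$ must eventually dominate $\log\mathfrak{m} + 2\sD_n^{-1}\log n$. Since $\sD_n \to \infty$ as $n\to\infty$ under the standing assumptions and \eqref{eq:deltaNSequence} already enforces $\delta_n\to 0$, this can be arranged---if necessary by enlarging $\delta_n$ slightly while still keeping it null, which preserves both the conclusion of Lemma~\ref{lem:FurthestGenerationBound} and our summability requirement. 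Once summability is secured, Borel--Cantelli delivers the $\P$-almost sure statement.
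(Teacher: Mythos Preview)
Your edge-clock argument is sound: for a fixed target $x\in\cZ_M$, the bound $P_{\bT}(x\text{ reached by }T_n)\le (r_\star T_n)^k/k!$ is correct, since crossing the $k$ prescribed edges forces $k$ ordered ring times of rate-$\le r_\star$ clocks. The problem lies in the union bound over endpoints. You pay a factor $\abs{\cZ_M}\le n^2\mathfrak m^{M}$, and your summability criterion then reads (after absorbing polylogs) $\delta_n\gtrsim \sD_n^{-1}\log n + \log\mathfrak m/\log n$. This fails precisely when $\delta_n$ is very small: for bounded offspring the paper's $\delta_n$ from \eqref{eq:deltaNSequence} is identically zero for large $n$ (since $\bar{\mathfrak m}_n=0$), so $k=1$ and your bound becomes $n^{O(1)}\mathfrak m^{\sD_n}\cdot c/(n\log^2 n)$, which diverges. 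Your proposed fix of ``enlarging $\delta_n$ slightly'' does not prove the lemma as stated, where $\delta_n$ is fixed by \eqref{eq:deltaNSequence}.

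The paper's route is genuinely different and sidesteps this entirely. It does \emph{not} union-bound over endpoints. Instead it invokes Lemma~\ref{lem:FurthestGenerationBound} to guarantee that, on a $\GW$-event of probability $\ge 1-n^{-2}$, every ray between generations $\sD_n$ and $(1+\delta_n)\sD_n$ contains a vertex of degree at most $\log\log n$. Each of the $n$ particles therefore encounters such a vertex, and its holding time there stochastically dominates an $\mathrm{Exp}(r_{\sD_n}^{\max}\log\log n)$ variable. A union bound over the $n$ \emph{particles} (not over $\abs{\cZ_M}$ vertices) then gives $P_{\bT}(\sS(T_n)>(1+\delta_n)\sD_n)\le n\cdot r_{\sD_n}^{\max}\log\log n\cdot T_n$, which is the summable quantity. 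The degree control from Lemma~\ref{lem:FurthestGenerationBound} is exactly what converts the per-edge rate bound $r_\star$ into a per-vertex holding-time bound, eliminating the need to track trajectories; this is the missing ingredient in your argument.
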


\begin{proof}

By Lemma \ref{lem:FurthestGenerationBound}, with $\GW$-probability at least $1-n^{-2}$, there exists some generation $\ell \geq \sD_n $ such that for every $i \in [n]$, the $i^{\text{th}}$ particle has at most $\log\log n$ neighbors. Let $\zeta_{i}$ be the holding time at this generation for particle $i$ and note that $\zeta_i$ satisfies
the stochastic domination 
\[
\zeta_{i} \succeq \om_{i}\sim \textup{Exp}(r^{\max}_{\sD_n} \log\log n ) \, .
\]  Set $t= cn^{c_{\textup{low}}c_{o}+1}\log n$ for $c>0$ sufficiently large  such that for all $n$ large enough
\begin{equation*} 
\GW\Big( P_\bT\big(  \sS (t) \ge  \sS (\tau^n_0\big)  \big) \ge 1 - \frac{2}{n^2} \Big) \ge \GW\Big( P_\bT\big(  \tau^n_0 \leq  t\big) \ge 1 - \frac{2}{n^2}  \Big) \geq 1- 2n^{-2}
\end{equation*}  using that $\sS(\cdot)$ is monotone increasing for the first inequality, and Proposition \ref{pro:EnteringParticles} for the second step. For the same choice of $t$ and using the definitions of $\sD_n$ and $\sS(t)$
\begin{align*}
P_\bT( \sS (t) &> \sD_n (1+\delta_n) ) \le P_\bT\Big(\min_{1 \le i \le n} \zeta_i <  t \Big) 
 \le P_\bT\Big(\min_{1 \le i \le n} \om_i <  t \Big) \leq \frac{c_1\log\log n}{ n \log^2 n} 
\end{align*} 
holds for some constant $c_1>0$ and all $n$ sufficiently large, with $\GW$-probability at least $1-n^{-2}$. An integral test shows that all error terms in  the above estimates are summable with respect to $n$, and we obtain \eqref{eq:SDestimate} by the Borel--Cantelli lemma.
\end{proof}

\begin{proof}[Proof of Theorem \ref{thm:DisentanglementGWT}]

Note that when the event in Lemma \ref{lem:HowManyEnter} occurs, $\P$-almost surely no ray contains more than $ \sD_n (1+\delta_n)$ particles out of the first $n$ particles  for all $n$ sufficiently large. We will use this observation to apply the a priori bound from Proposition \ref{lem:Early-sep} for all trees $(\bT^i)$ rooted at generation $\sD_n (1+\delta_n)$ which eventually contain at least one of the first $n$ particles. In the following, we assume that $\sD_n < n$. For $\sD_n \ge n$, we directly apply Proposition \ref{lem:Early-sep} for the original tree $\bT$ with $n$ particles. \\

We start with the case where $d_{\min} \geq 2$ holds. Let $\delta \in (0,1)$ be fixed and set
\begin{equation}\label{eq:FinalDisentangle1}
 \tilde{\sM}_n = \frac{1}{d_{\min} -1}(\sD_n(1+\delta_n)) + (2+\delta) \log_{1+\e} (n\sD_n) \,  .
\end{equation}
Moreover, we fix a tree $\bT^i$ rooted at generation $\sD_n (1+\delta_n)$ which eventually contains a particle. We claim that by Proposition \ref{lem:Early-sep}, all of the at most $\sD_n (1+\delta_n)$ particles entering $\bT^i$ are disentangled after $\tilde{\sM}_n$ generations in $\bT^i$ with $P_\bT$-probability at least $1 - cn^{-2-\delta}$ for some constant $c>0$. To see this, recall \eqref{def:DisentangleGeneration} and observe that
\begin{equation*}
F_{\sD_n (1+\delta_n)}(\tilde{\sM}_n) \leq (2+\delta)\log_{1+\e}( n \sD_n) \, .
\end{equation*}
We then apply \eqref{eq:DisentanglementStatementLemma} to obtain the claim. Note that this holds for {$\GW$-almost} every tree $(\bT,o) \in \mathcal{T}$. Moreover, the events that the particles disentangle on the trees $(\bT^i)$ are mutually independent, and we conclude using a union bound for the trees $(\bT^i)$. \\

Now suppose that $d_{\min} = 1$ holds. 
Recall $c_{o}$ from \eqref{def:cmu} and that $\delta \in (0,1)$ is fixed. Note that $\delta_n \leq \delta$ holds for all $n$ sufficiently large and set
\begin{equation}\label{eq:FinalDisentangle2} 
\tilde{\sM_n} = c_{o}(\sD_n (1+\delta)) + (2+\delta)c_{o} \log_{1+\e} (n\sD_n) \, .
\end{equation} Observe that $(2+\delta)\log_{1+\e} n\geq \log_2 n$  for all $n$ using the definition of~$\varepsilon$~in~\hyperref[def:UniformElliptic]{\normalfont(UE)}. Let $H_n=\sD_n (1+\delta)+(2+\delta)\log_{1+\e} (n\sD_n)$. Similar to the case $d_{\min} \geq 2$, we now combine  Proposition \ref{lem:Early-sep} and Lemma \ref{lem:coreLD} to see that $\P$-almost surely, all of the at most $\sD_n (1+\delta)$ particles entering $\bT^i$ are disentangled after $\tilde{\sM}_n$ generations in $\bT^i$ for all $i \in [n]$ and $n$ large enough. Compare \eqref{eq:FinalDisentangle1} and \eqref{eq:FinalDisentangle2} with $\sM_n$ in \eqref{def:DecouplingPoint} and \eqref{def:DecouplingPoint2} to conclude.
\end{proof}


\section{Couplings} \label{sec:Couplings}

In this section, we discuss three methods of comparing the TASEP on trees to related processes via couplings. We start with the canonical coupling which allows us to compare the TASEP on trees for different initial configurations. Next, we introduce a comparison to independent random walks. This coupling is used to prove a lower bound on the time window in Theorem~\ref{thm:LinearCurrentL} and an upper bound on the window of generations in Theorem~\ref{thm:CurrentfixedT}. Our third model is a slowed down TASEP which is studied using an inhomogeneous LPP model. It is used to give an upper bound on the time window in Theorem~\ref{thm:LinearCurrentL} and a lower bound on the window of generations in Theorem~\ref{thm:CurrentfixedT}. In all cases, we fix a tree $\bT=(V,E,o) \in \mathcal{T}$ and a family of rates $(r_{x,y})_{x,y \in \Er}$ such that the TASEP is a Feller process.

\subsection{The canonical coupling}\label{sec:CanCoupling}

Let $(\eta^{1}_t)_{t \geq 0}$ and $(\eta^{2}_t)_{t \geq 0}$ denote two totally asymmetric simple exclusion processes on $\bT=(V,\Er)$ with transition rates $(r_{x,y})$, where particles are generated at the root at rates $\lambda_1$ and $\lambda_2$, respectively. Assume $\lambda_1 \leq \lambda_2$. The canonical coupling is the following joint evolution $(\eta^{1}_t,\eta^{2}_t)_{t \geq 0}$ of the two TASEPs. \\

For every edge $e=(x,y) \in \Er$, consider independent rate $r_{x,y}$ Poisson clocks. Whenever 
 a clock rings at time $t$ for an edge $(x,y)$, we try in both processes to move a particle from $x$ to $y$, provided that $\eta^{1}_t(x)=1-\eta^{1}_t(y)=1$ or $\eta^{2}_t(x)=1-\eta^{2}_t(y)=1$ holds.  We place a rate $\lambda_1$ Poisson clock at the root. Whenever the clock rings, we try to place a particle at the root in both processes. Furthermore, if $\lambda_1 \neq \lambda_2$, we place an additional independent rate $(\lambda_2-\lambda_1)$ Poisson clock at the root. Whenever this clock  rings, we try to place a particle at the root in $(\eta^{2}_t)_{t \geq 0}$.  \\
 
Let $\succeq$ denote the component-wise partial order on $\{ 0,1 \}^{V}$ and denote by $\mathbf{P}$ the law of the canonical coupling.
\begin{lemma}\label{lem:CanonicalCoupling} Let $(\eta^{1}_t)_{t \geq 0}$ and $(\eta^{2}_t)_{t \geq 0}$ be two TASEPs on trees within the above canonical coupling. Suppose that  $\lambda_1 \leq \lambda_2$ holds, then 
\begin{equation}
\mathbf{P}\left( \eta_t^{1} \preceq_{} \eta_t^{2} \text{ for all } t \geq 0 \mid  \eta_0^{1} \preceq \eta_0^{2} \right) = 1  .
\end{equation}
\end{lemma}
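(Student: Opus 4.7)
The plan is to establish this lemma by the standard attractiveness argument for exclusion processes: check that the partial order $\preceq$ is preserved by every single joint transition of the coupled dynamics, and then appeal to right-continuity and the fact that only countably many clock events occur in bounded time to pass from a pointwise preservation statement to an almost-sure statement valid for all $t \ge 0$.

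First I would note that the joint process $(\eta^1_t, \eta^2_t)_{t \ge 0}$ is itself a Feller process driven by a countable family of independent Poisson clocks: one of rate $r_{x,y}$ on each edge $(x,y) \in \Er$, one of rate $\lambda_1$ at $o$, and one of rate $\lambda_2 - \lambda_1$ at $o$. Between rings the configurations are constant, so it suffices to check that if $\eta^1 \preceq \eta^2$ immediately before any given ring then the same inequality holds immediately after. Standard localisation arguments, together with the uniform boundedness of rates inherited from the Feller hypothesis, reduce the almost-sure statement to this local deterministic verification.

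The verification itself is a short case analysis. For an edge clock at $(x,y)$, the hypothesis $\eta^1 \preceq \eta^2$ forces $\eta^1(x) = 1 \Rightarrow \eta^2(x) = 1$ and $\eta^2(y) = 1 \Rightarrow \eta^1(y) = 1$, which rules out the only configuration in which a successful jump in copy $1$ could coexist with a blocked jump in copy $2$. In the remaining cases either both copies jump (the values at $x$ and $y$ are then swapped in both copies, clearly preserving the order at those two sites), only copy $2$ jumps (which requires $\eta^1(x) = 0$, so nothing in copy $1$ changes and the order at $y$ is preserved because $\eta^1(y) \le \eta^2(y) = 0$ before), or nothing happens. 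For the shared root clock of rate $\lambda_1$, the value at $o$ is only pushed up to $1$, and if it is raised in copy $1$ then necessarily $\eta^1(o) = 0 \le \eta^2(o)$ before, so the updated value in copy $1$ is $1$ while copy $2$ either also moves to $1$ or was already $1$; in either case $\eta^1(o) \le \eta^2(o)$ persists. The extra root clock of rate $\lambda_2 - \lambda_1$ only raises $\eta^2(o)$ and so cannot break the inequality at $o$.

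The only real obstacle is not any individual case but ensuring the bookkeeping of the root clocks produces the correct marginal rates $\lambda_1$ and $\lambda_2$ without double counting — this is immediate from the independence and superposition of the two root clocks — and checking that the joint generator so described indeed agrees with the product of the two marginal generators applied to test functions depending on only one coordinate, which is a routine identification. Once every transition has been shown to preserve $\preceq$, induction over the successive ring times inside any bounded time horizon and any bounded region, followed by a monotone limit in both the region and the horizon, yields the conclusion for all $t \ge 0$.
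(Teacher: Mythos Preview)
The paper states this lemma without proof, treating monotonicity of the canonical coupling as standard. Your proposal supplies the usual attractiveness argument and is correct in spirit, but the edge-clock case analysis contains a slip. You write that $\eta^1 \preceq \eta^2$ forces $\eta^2(y)=1 \Rightarrow \eta^1(y)=1$; the implication goes the other way. More importantly, the scenario ``copy $1$ jumps, copy $2$ is blocked'' is \emph{not} ruled out: it occurs exactly when $\eta^1(x)=1$, $\eta^1(y)=0$, $\eta^2(x)=1$, $\eta^2(y)=1$. After the ring one has $\eta^1=(0,1)$ and $\eta^2=(1,1)$ at $(x,y)$, so the order is still preserved --- but this case needs to appear in your enumeration rather than be dismissed. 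With that one extra line the argument is complete and matches what the paper implicitly takes for granted.
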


\begin{remark}\label{rem:AdditionalSources} Similarly, we can define the  canonical coupling for the TASEP on trees when we allow reservoirs of intensities $\lambda^{v}_1$ and $\lambda^{v}_2$ at all sites $v \in V$, respectively. The canonical coupling preserves the partial order $\succeq$ provided that $\lambda^{v}_1 \leq \lambda^{v}_2$ holds for all sites $v \in V$.
\end{remark}

\subsection{A comparison with independent random walks} \label{sec:IndependentRWs} We start by comparing the TASEP $(\eta_t)_{t \geq 0}$ on $\bT$ to independent biased random walks on $\bT$. Assume that the TASEP is started from some state $\eta$, which is --- in contrast to our previous assumptions --- not necessarily the configuration with only empty sites. We enumerate the particles according to an arbitrary rule and denote by $z^{i}_t$ the position of the $i^{\text{th}}$ particle at time $t \geq 0$. We define the waiting time $\sigma_{\ell}^{(i)} $ in level $\ell$ for all $i \in \Z$ and $\ell \in \N$  to be the time particle $i$ spends on generation $\ell$ once it  sees at least one empty site.
Recall $R^{\max}_\ell$ from \eqref{def:R_xdInverse} and, with a slight abuse of notation, let $\succeq$ denote the stochastic domination for random variables.  Then
\be\label{eq:WaitingTimeInequality}
\sigma_{\ell}^{(i)}  \succeq	 R^{\max}_\ell \om_{\ell}^{(i)} 
\ee  holds for all $i \in [n]$ and $\ell \geq 0$, where $\om_{\ell}^{(i)}$ are independent Exponential-$1$-distributed random variables.  We now define the independent random walks $(\tilde{\eta}_t)_{t \geq 0}$ started from $\eta$. \\

Each particle at level $\ell$ waits according to independent rate $(R^{\max}_\ell)^{-1}$ Poisson clocks, and jumps to a neighbor at generation $\ell+1$ chosen uniformly at random when the clock rings. When a particle is created in $(\eta_t)_{t \geq 0}$, create a particle in $(\tilde{\eta}_t)_{t \geq 0}$ as well. \\

 Note that in these dynamics, a site can be occupied by multiple particles at a time.  Let $\tilde{z}^{i}_t$ denote the position of the $i^{\text{th}}$ particle in $(\tilde{\eta}_t)_{t \geq 0}$ at time $t \geq 0$ and denote by $(\tilde{\cJ}_{\ell}(t))_{t \geq 0}$ the aggregated current of $(\tilde{\eta}_t)_{t \geq 0}$ at generation $\ell \in \N_0$. The following lemma is  immediate from  \eqref{eq:WaitingTimeInequality} and the construction of the  random walks $(\tilde{\eta}_t)_{t \geq 0}$.
\begin{lemma}\label{lem:CouplingToIRW} There exists a coupling $\tilde{\P}$ between the TASEP $(\eta_{t})_{t \geq 0}$ on $\bT$ and the corresponding independent random walks $(\tilde \eta_{t})_{t \geq 0}$ such that
\begin{equation}
\tilde{\P}\left(\abs{z^{i}_t} \leq \abs{\tilde{z}^{i}_t}  \text{ for all } i\in \N \mid \tilde{\eta}_0 =\eta_0\right) = 1 \ .
\end{equation} In particular, $\cJ_{\ell}(t) \leq \tilde{\cJ}_{\ell}(t)$ holds  for all $\ell \in \N_0$ and $t \geq 0$.
\end{lemma}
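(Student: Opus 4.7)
The goal is to realize $(\eta_t)$ and $(\tilde\eta_t)$ on a common probability space, enumerating particles in birth order at the root, so that for each $i$ the random walker's waiting time at every generation is extracted from the TASEP trajectory and is almost surely no larger than the TASEP active waiting time at the same level. Once this is set up, the level comparison follows by summing waiting times to compare jump times.

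The key input is \eqref{eq:WaitingTimeInequality}. Fix a particle $i$ in the TASEP and let $x \in \cZ_\ell$ be the site it occupies from the first moment it sees an empty child. By the graphical construction, the time $\alpha_\ell^{(i)}$ from that moment until the next ring of any outgoing Poisson clock at $x$ is $\textup{Exp}(r_x)$-distributed conditional on the past, and $\alpha_\ell^{(i)} \le \sigma_\ell^{(i)}$ since the particle cannot jump before some outgoing clock rings. Because $r_x \le (R^{\max}_\ell)^{-1}$, I enlarge the probability space with an independent auxiliary $\beta_\ell^{(i)} \sim \textup{Exp}((R^{\max}_\ell)^{-1} - r_x)$ and set $\tilde\sigma_\ell^{(i)} := \min(\alpha_\ell^{(i)}, \beta_\ell^{(i)})$. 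Exponential splitting then gives that $\tilde\sigma_\ell^{(i)} \sim \textup{Exp}((R^{\max}_\ell)^{-1})$ conditional on the TASEP history up to the relevant moment, with the conditional law not depending on the conditioning, and $\tilde\sigma_\ell^{(i)} \le \alpha_\ell^{(i)} \le \sigma_\ell^{(i)}$ holds pathwise.

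I then declare that the $i$-th random walker is born at the same time $b_i$ as its TASEP counterpart, waits $\tilde\sigma_0^{(i)}$ at generation $0$ before jumping to a child chosen uniformly using an independent fresh uniform random variable, waits $\tilde\sigma_1^{(i)}$, and so on. Because the conditional distribution of $\tilde\sigma_\ell^{(i)}$ is always the same fixed exponential regardless of the past, a repeated application of the strong Markov property shows that the family $(\tilde\sigma_\ell^{(i)})_{i,\ell}$ is unconditionally i.i.d.\ at each level and independent of the uniform child choices; hence $(\tilde\eta_t)$ has the distribution claimed in the statement.

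Finally, the TASEP jump times satisfy $T_k^{(i)} = b_i + \sum_{\ell<k} \big( W_\ell^{(i)} + \sigma_\ell^{(i)}\big) \ge b_i + \sum_{\ell<k} \tilde\sigma_\ell^{(i)} = \tilde T_k^{(i)}$, where $W_\ell^{(i)} \ge 0$ denotes the (possibly zero) time the TASEP particle spends at generation $\ell$ before any child first becomes empty. Consequently, whenever the TASEP particle has reached level $k$, so has the random walker, giving $|z^i_t| \le |\tilde z^i_t|$; the current bound follows at once from the identity $\cJ_\ell(t) = \sum_i \mathds{1}\{|z^i_t| \ge \ell\}$. The main subtlety, and the step to check most carefully, is the joint distribution of the extracted $(\tilde\sigma_\ell^{(i)})$: because every conditional law equals the same exponential independent of the past, the strong Markov argument forces these variables to be genuinely i.i.d., but this requires setting up the filtration generated by the TASEP jump times precisely.
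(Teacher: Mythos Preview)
Your proposal is correct and follows the same approach as the paper, which merely states that the lemma ``is immediate from \eqref{eq:WaitingTimeInequality} and the construction of the random walks $(\tilde{\eta}_t)_{t \geq 0}$.'' You have fleshed out exactly the coupling the paper has in mind---extracting dominated exponential waiting times from the TASEP clocks and summing---and you correctly flag the joint independence of the $(\tilde\sigma_\ell^{(i)})$ as the point requiring care; the paper does not spell this out either, and your strong Markov/disjoint-clock argument (distinct particles occupy distinct sites, hence use disjoint pieces of the edge Poisson processes) is the right justification.
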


Using the comparison to independent random walks, we can give bounds on the current using estimates on weighted sums of Exponential random variables. We will frequently use the following estimates.

\begin{lemma} \label{lem:RWcomparision} For $\ell \in \N$ and 
$c_0, c_1, c_2, \ldots, c_\ell, t \geq 0$, set $S := \sum_{i=0}^\ell {c_i}^{-1}$ as well as $c:=\min_{i \in \{0,1, \ldots, \ell\}} c_i$.  Let $(\om_{i})_{i \in \{0,1, \ldots, \ell\}}$ be independent Exponential-$1$-distributed random variables.
Then for any $\delta \in (0,1)$, 
\begin{align*}
1- \frac{\eup^{- \delta c t}}{(1 - \delta) ^{ c S}} 
\le P\Big( \sum_{i = 0}^{\ell} \frac{1}{c_i} \om_{i} \leq t\Big) \leq \min\left(\frac{\exp(\delta  c t)}{(1+\delta)^{c S}}, 
\exp\Big( (\ell + 1) ( 1 + \log( t\ell^{-1} ) ) + \sum_{i=0}^{\ell} \log(c_i) \Big)
 \right) \, .
\end{align*}
\end{lemma}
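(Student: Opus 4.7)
The plan is to view $X_i := c_i^{-1}\om_i$ as independent $\textup{Exp}(c_i)$ random variables and to prove the three estimates via classical exponential Markov (Chernoff) bounds applied to $\sum_{i=0}^\ell X_i$, exploiting the closed form $\E[e^{\theta X_i}] = c_i/(c_i-\theta)$ valid for $\theta < c_i$.

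For the Chernoff-type estimates (the lower bound and the first term in the minimum) I would take $\theta = \pm\delta c$ and absorb the heterogeneity of $(c_i)$ through elementary convexity. For the upper bound, Markov yields
\begin{equation*}
P\Big(\sum_{i=0}^\ell X_i \leq t\Big) \leq e^{\delta c t}\prod_{i=0}^\ell \frac{1}{1+\delta\, c/c_i}\, .
\end{equation*}
Setting $y_i := c/c_i \in (0,1]$, the convexity of $y \mapsto (1+\delta)^y$ on $[0,1]$ (with equality at the endpoints $0$ and $1$) gives $(1+\delta)^{y_i} \leq 1 + \delta y_i$, hence $(1+\delta y_i)^{-1} \leq (1+\delta)^{-y_i}$; multiplying and using $\sum_i y_i = cS$ delivers $e^{\delta c t}/(1+\delta)^{cS}$. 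The lower bound is the mirror argument: for $0 < \theta = \delta c < c$, Markov applied to $e^{\theta \sum X_i}$ gives $P(\sum X_i > t) \leq e^{-\delta c t}\prod c_i/(c_i-\delta c)$, and the concavity inequality $(1-\delta)^y \leq 1-\delta y$ on $[0,1]$ produces $\prod c_i/(c_i-\delta c) \leq (1-\delta)^{-cS}$; subtracting from $1$ yields the stated lower bound.

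For the second term in the minimum, I would replace the factor-by-factor estimate by the cruder $c_i/(c_i+\theta) \leq c_i/\theta$ in the same Chernoff inequality and optimise $\theta \sim \ell/t$, which produces $e^\ell (t/\ell)^\ell \prod c_i = \exp(\ell(1+\log(t/\ell))+\sum_i \log c_i)$. Equivalently, the direct simplex-volume estimate
\begin{equation*}
P\Big(\sum_{i=0}^\ell X_i \leq t\Big) = \int_{\{\sum x_i \leq t,\, x_i \geq 0\}} \prod c_i\, e^{-c_i x_i}\,dx \leq \prod c_i \cdot \frac{t^{\ell+1}}{(\ell+1)!}
\end{equation*}
combined with Stirling's inequality produces the same form up to a negligible factor.

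The main obstacle is purely cosmetic: verifying the elementary inequalities $(1\pm\delta)^y \leq 1\pm\delta y$ on $[0,1]$, with no probabilistic input beyond Markov required. The only real benefit of the specific form is the packaging: writing the bounds in terms of $c=\min_i c_i$ and the weighted sum $S = \sum_i c_i^{-1}$ keeps them usable uniformly across the heterogeneous rates arising in the coupling to independent random walks, where the exponential holding times at successive generations have rates proportional to $r_\ell^{\max}$ and vary widely in $\ell$.
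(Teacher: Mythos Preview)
Your proposal is correct and essentially identical to the paper's proof: both use the Chernoff bound with parameters $\theta=\pm\delta c$ (respectively $\theta=\ell/t$ for the second upper bound), and your inequality $(1\pm\delta)^{y}\le 1\pm\delta y$ on $[0,1]$ is exactly the paper's inequality $\log(1+xc/c_i)\ge (c/c_i)\log(1+x)$ rewritten, the paper attributing it to concavity of the logarithm rather than convexity of $y\mapsto(1\pm\delta)^y$. One cosmetic slip: for the lower bound you call $(1-\delta)^y\le 1-\delta y$ a ``concavity inequality'', but it is again convexity of the exponential (the function $y\mapsto(1-\delta)^y$ lies below its chord on $[0,1]$); the inequality itself and the conclusion are correct.
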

\begin{proof} By Chebyshev's inequality, we see that 
\begin{align*}
P\Big( \sum_{i = 0}^{\ell} \frac{1}{c_i} \om_{i} \leq t\Big) \leq \eup^{\ell} \prod_{i=0}^\ell {E}\left[ \exp\left(-\frac{\ell}{t c_i} \om_{i} \right)\right] = \eup^{\ell}\exp\left(-\sum_{i=0}^{\ell} \log\left(1 + \frac{\ell}{t c_i}\right) \right)  
\end{align*}
holds. Since the logarithm is increasing, we can rearrange the sums to get the second upper bound. For the first upper bound, again apply Chebyshev's inequality for
\begin{align}\label{eq:SecondEquationLemmaRW}
P\Big( \sum_{i = 0}^{\ell} \frac{1}{c_i} \om_{i} \leq t\Big) \le \eup^{\delta  c t}\exp\left(-\sum_{i=0}^{\ell} \log\left(1 + \frac{\delta c}{c_i}\right) \right) \, . 
\end{align}
Using concavity of the logarithm, we obtain for all $i \in \{0,1, \ldots, \ell\}$ and all $x>-1$ that
\begin{equation}\label{eq:logEstimate}
 \log\left( 1+ \frac{x c}{c_i} \right) \geq  \log\left( 1+ x\right) \frac{c}{c_i} \, .
\end{equation}
For $x=\delta$ in \eqref{eq:logEstimate}, together with \eqref{eq:SecondEquationLemmaRW}, this yields the first upper bound. For the lower bound, we use again Chebyshev's inequality and \eqref{eq:logEstimate} with $x=-\delta$ to get that
\begin{align*}
P\Big( \sum_{i = 0}^{\ell} \frac{1}{c_i} \om_{i} \geq t \Big) \le \eup^{-\delta c t}\exp\left(-\sum_{i=0}^{\ell} \log\left(1 - \frac{c \delta}{c_i}\right) \right)  \le  \frac{\eup^{- \delta  c t}}{(1 - \delta) ^{ c S}} \, .
\end{align*}
This finishes the proof of the lemma.
\end{proof} 

\subsection{A comparison with an inhomogeneous LPP model} \label{sec:LPP}  In this section, we compare the TASEP on $\bT$ to a slowed down exclusion process, which we study using last passage percolation (LPP) in an inhomogeneous environment. To describe this model, we will now give a brief introduction to last passage percolation, and refer the reader to \cite{S:SurveyPlaneModels,S:ReviewCornerGrowth} for a more  comprehensive discussion. 

Consider the lattice $\N\times \N$, and let $(\om_{i,j})_{i,j\in \N}$ be independent Exponential-$1$-distributed random variables. Let $\pi_{m,n}$ be an up-right \textbf{lattice path} from $(1,1)$ to $(m,n)$, i.e.\ 
\[
\pi_{m,n} = \{ u_1 = (1,1), u_2, \ldots , u_{m+n} = (m,n): \, u_{i+1} - u_{i} \in \{(1,0), (0,1) \} \text{ for all } i\}\, .
\] The set of all up-right lattice paths from $(1,1)$ to $(m,n)$ is denoted by $\Pi_{m,n}$.  The \textbf{last passage time} in an environment $\omega$ is defined as 
\be
G^{\om}_{m,n} = \max_{\pi_{m,n} \in \Pi_{m,n}} \sum_{u \in \pi_{m,n}} \om_{u} \, ,
\ee
for all $m,n\in \N$. Equivalently, the last passage times are defined recursively as  
\be\label{eq:LPP}
G^{\om}_{m,n} = \max\{G^{\om}_{m-1, n}, G^{\om}_{m, n-1}\} + \om_{m,n} \, ,
\ee with boundary conditions for all $k, \ell \in \N$ given by
\be\label{eq:LPP2}
G^{\om}_{1, \ell} = \sum_{j=1}^\ell \om_{1, j}\, , \qquad  G^{\om}_{k, 1} = \sum_{i=1}^k \om_{i, 1} \, .
\ee 
In the following, we will restrict the space of lattice paths, i.e.\ we consider the set of paths $A_m := \{ u = (u^1, u^2):  u^2 \ge u^1 - m  \} \cap \N\times \N$. For any $(i, j)$ in $\N\times \N$, we define 
\begin{equation*}
G^{\om}_{i,j}(A_m) = \max_{ \pi \in \Pi_{i,j}(A_m)} \sum_{ u \in \pi } \om_u \, ,
\end{equation*}
where $\Pi_{i,j}(A_m)$  contains all up-right paths from $(1,1)$ to $(i,j)$ that do not exit $A_m$, i.e.\
\begin{equation*}
\Pi_{i,j}(A_m) = \Big\{ \pi = \{ (1,1)=  u_1, \ldots u_{i+j} = (i,j) \} : u_{i+1} - u_i  \in \{(1,0), (0,1)\}, u_i \in A_m  \Big\} \, .
\end{equation*}
Based on the environment $\omega$, we define an environment $\tilde \omega=\{\om_{i,j}\}_{i \in \N, j \in \N}$ by
\be \label{eq:envLPP}
\tilde\om_{i,j} :=
\begin{cases} \vspace{0.2cm}
\frac{1}{r_{i-j-1}^{\min}} \om_{i,j} & \text{ if } j < i\, , \\
 \lambda^{-1} \om_{i,j} & \text{ if } j = i \, , \\ 
0, & \text{ else};
\end{cases}
\ee 
see Figure \ref{fig:LPPEnvironmentTASEP} for a visualization. The next lemma shows that the last passage times in~$\tilde{\omega}$ can be used to study the entering time of the $n^{\text{th}}$ particle in the TASEP on trees.
\begin{lemma} \label{lem:ExpoLPP} Let $m, n \in \N$ be such that $m \le \sM_n$ holds, where $\sM_n$ is defined in Theorem~\ref{thm:DisentanglementGWT}. Then there exists a coupling between $G^{\tilde \om}_{n, n + m}$ and the time $\tau_m^n$ of the TASEP on trees, defined in \eqref{def:TauUp}, such that $\P$-almost surely,  for all $n$ large enough 
 \be\label{eq:LPPvsEnteringTime}
G_{n + m,n}^{\tilde \om}(A_{\sM_n}) \ge \tau_m^n \, .
 \ee
\end{lemma}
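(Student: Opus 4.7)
The plan is to reduce the statement via a two-step argument: first I will pass from the original TASEP to a spatially homogenised ``slowed'' TASEP on $\bT$, then I will identify the arrival-time recursion of the slowed process as an upper bound governed by the constrained LPP recursion for the environment $\tilde\omega$ of \eqref{eq:envLPP}. I define $(\eta^{s}_t)_{t\ge 0}$ as the TASEP on $\bT$ with rates $r^{s}_{x,y}=r^{\min}_{|x|}$ for every edge $(x,y)\in \Er$ and reservoir rate $\lambda$. Since $r^{s}_{x,y}\le r_{x,y}$ at each edge, the canonical coupling of Section~\ref{sec:CanCoupling} (together with its edge-wise version noted in Remark~\ref{rem:AdditionalSources}) delivers a joint realisation under which $\tau^{n}_{m}\le \tau^{n,s}_{m}$ almost surely, where $\tau^{n,s}_{m}$ denotes the analogue of $\tau^{n}_m$ for the slowed process. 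It therefore suffices to bound $\tau^{n,s}_{m}$ from above by $G^{\tilde\omega}_{n+m,n}(A_{\sM_n})$.

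I next label the particles of the slowed TASEP by their order of creation and write $\tilde T(k,\ell)$ for the first time the $k$-th particle reaches $\cZ_\ell$. I build the edge and reservoir clocks of the slowed process from an i.i.d.\ Exp$(1)$ family $(\omega_{i,j})_{j\le i}$ matched to $\tilde\omega$: $\omega_{k,k}$ encodes the Exp$(\lambda)$ delay between the root being vacated and the $k$-th creation event, while $\omega_{k+\ell,k}$ encodes the Exp$(r^{\min}_{\ell-1})$ waiting time of the clock of the particular child that particle $k$ eventually uses when jumping from $\cZ_{\ell-1}$ to $\cZ_\ell$ (the spatial homogenisation of rates within each generation allows this clock-splitting to be performed on the fly via memorylessness). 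A strong induction on $k+\ell$ then gives
\[
\tilde T(k,\ell)\ \le\ G^{\tilde\omega}_{k+\ell,\,k}(A_{\sM_n}) \qquad \text{for every } k\le n \text{ and } \ell\le m,
\]
on the almost sure event provided by Theorem~\ref{thm:DisentanglementGWT} at level $n$. On the diagonal $\ell=0$ the $k$-th particle can enter only after the root is vacated at time $\tilde T(k-1,1)$, and then waits $\lambda^{-1}\omega_{k,k}$, which is the LPP boundary recursion. In the bulk $\ell\ge 1$, particle $k$ reaches $\cZ_\ell$ no later than the maximum of $\tilde T(k,\ell-1)$ and the vacation time of the chosen child; the latter equals $\tilde T(j,\ell+1)$ for some earlier particle $j\le k-1$, which by the inductive hypothesis and the monotonicity of $G^{\tilde\omega}_{\cdot,\cdot}(A_{\sM_n})$ in both coordinates is at most $G^{\tilde\omega}_{k+\ell,\,k-1}(A_{\sM_n})$. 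Adding the waiting time $(r^{\min}_{\ell-1})^{-1}\omega_{k+\ell,k}=\tilde\omega_{k+\ell,k}$ reproduces the LPP recursion \eqref{eq:LPP} at $(k+\ell,k)$, and evaluating at $(n,m)$ yields the claim.

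The main obstacle is the queueing interpretation on a tree: for one-dimensional TASEP the LPP recursion is automatic because particle $k$ can only be blocked by particle $k-1$, but on $\bT$ the particles travel along potentially different branches and the identity of a blocker is not immediate. The restriction to paths in $A_{\sM_n}$ is precisely what absorbs this difficulty: by Theorem~\ref{thm:DisentanglementGWT}, beyond generation $\sM_n$ the first $n$ particles occupy distinct sites and never block one another, so the recursion need only track interactions inside the strip $\{u^1-u^2\le \sM_n\}$, and at the strip boundary the constrained LPP can only proceed horizontally, matching the fact that no vacation delay has to be inserted there. Inside the strip one still has to argue that whenever particle $k$ is blocked at some generation $\ell\le \sM_n$, the blocker is an earlier particle $j\le k-1$ sharing an ancestor with particle $k$, and to lift $\tilde T(j,\ell+1)$ to the coordinate $(k+\ell,k-1)$ by monotonicity of the constrained LPP time. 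Carrying this case analysis out cleanly alongside the on-the-fly clock-splitting is the main technical work of the proof.
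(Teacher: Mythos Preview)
Your inductive comparison in step 2 is the heart of the argument and matches the paper's approach (which packages it as Lemma~\ref{lem:LPPComparison}, stated essentially without proof). The monotonicity observation $G^{\tilde\omega}_{j+\ell+1,j}(A_{\sM_n}) \leq G^{\tilde\omega}_{k+\ell,k-1}(A_{\sM_n})$ for any blocker $j \leq k-1$ is exactly what makes the tree-to-LPP comparison go through, and your handling of the strip boundary via the disentanglement event $D_n$ is correct. However, your step 1 (passing to homogenised edge rates via the canonical coupling) has a real gap: Remark~\ref{rem:AdditionalSources} concerns additional \emph{reservoirs} at vertices, not edge rates, and the basic coupling (common rate-$r_{x,y}$ clocks, thinned for the slow process) does \emph{not} deliver a pathwise per-particle ordering on trees. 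Particles in the two systems may select different branches, so particle $k$ in the fast system can sit at a lower generation than particle $k$ in the slow system at a given time; the claimed inequality $\tau^n_m \leq \tau^{n,s}_m$ is not a consequence of Section~\ref{sec:CanCoupling}.

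Fortunately, step 1 is unnecessary: your induction applies verbatim to the \emph{original} tree TASEP. Once at least one child of particle $k$'s site at level $\ell-1$ is vacant, the residual waiting time until the jump is stochastically dominated by an Exp$(r^{\min}_{\ell-1})$ variable (every outgoing rate exceeds $r^{\min}_{\ell-1}$), and this can be coupled to $(r^{\min}_{\ell-1})^{-1}\omega_{k+\ell,k}$ on the fly via memorylessness, without any prior rate homogenisation. This is precisely the paper's route: it defines the slowed process directly as the one-dimensional system in which particle $i$ is artificially forced to wait for particle $i-1$ at every level, couples the waiting times $\sigma^{(i)}_\ell \preceq (r^{\min}_\ell)^{-1}\omega_{\ell+i+1,i}$, and then identifies the resulting recursion with the restricted LPP on $A_{\sM_n}$. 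Dropping your step 1 and running the induction against the original process recovers exactly this argument.
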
\begin{figure}
\centering
\begin{tikzpicture}[scale=0.9]

\foreach \x in{1,...,6}{
	\draw[gray!50,thin](\x,0) to (\x,5); }
\foreach \y in{1,...,4}{
	\draw[gray!50,thin](0,\y) to (7,\y); }
	
	\draw[thick] (0,0) to (7,0);
	\draw[gray!50,thin] (7,0) to (7,5);
	\draw[gray!50,thin] (7,5) to (0,5);
	\draw[thick] (0,5) to (0,0);






	\node (x1) at (0.5,0.5){$\lambda$} ;
	\node (x2) at (0.5,1.5){$0$} ;
	\node (x3) at (0.5,2.5){$0$} ;
	\node (x4) at (0.5,3.5){$0$} ;
	\node (x5) at (0.5,4.5){$0$} ;

	\node (x1) at (1.5,0.5){$r^{\min}_{0}$} ;
	\node (x2) at (1.5,1.5){$\lambda$} ;
	\node (x3) at (1.5,2.5){$0$} ;
	\node (x4) at (1.5,3.5){$0$} ;
	\node (x5) at (1.5,4.5){$0$} ;

	\node (x1) at (2.5,0.5){$r^{\min}_{1}$} ;
	\node (x2) at (2.5,1.5){$r^{\min}_{0}$} ;
	\node (x3) at (2.5,2.5){$\lambda$} ;
	\node (x4) at (2.5,3.5){$0$} ;
	\node (x5) at (2.5,4.5){$0$} ;

	\node (x1) at (3.5,0.5){$r^{\min}_{2}$} ;
	\node (x2) at (3.5,1.5){$r^{\min}_{1}$} ;
	\node (x3) at (3.5,2.5){$r^{\min}_{0}$} ;
	\node (x4) at (3.5,3.5){$\lambda$} ;
	\node (x5) at (3.5,4.5){$0$} ;

	\node (x1) at (4.5,0.5){$r^{\min}_{3}$} ;
	\node (x2) at (4.5,1.5){$r^{\min}_{2}$} ;
	\node (x3) at (4.5,2.5){$r^{\min}_{1}$} ;
	\node (x4) at (4.5,3.5){$r^{\min}_{0}$} ;
	\node (x5) at (4.5,4.5){$\lambda$} ;

	\node (x1) at (5.5,0.5){$r^{\min}_{4}$} ;
	\node (x2) at (5.5,1.5){$r^{\min}_{3}$} ;
	\node (x3) at (5.5,2.5){$r^{\min}_{2}$} ;
	\node (x4) at (5.5,3.5){$r^{\min}_{1}$} ;
	\node (x5) at (5.5,4.5){$r^{\min}_{0}$} ;
	
	\node (x1) at (6.5,0.5){$r^{\min}_{5}$} ;
	\node (x2) at (6.5,1.5){$r^{\min}_{4}$} ;
	\node (x3) at (6.5,2.5){$r^{\min}_{3}$} ;
	\node (x4) at (6.5,3.5){$r^{\min}_{2}$} ;
	\node (x5) at (6.5,4.5){$r^{\min}_{1}$} ;

\end{tikzpicture}
\caption[Environment to describe the slowed down TASEP]{\label{fig:LPPEnvironmentTASEP}Visualization of the environment which is used to describe the slowed down TASEP as a last passage percolation model. The numbers 
in the cells are the parameters of the respective Exponential-distributed random variables. The square at the bottom left of the grid corresponds to the cell $(1,1)$.}
\end{figure}

In order to show Lemma \ref{lem:ExpoLPP}, we require a bit of setup. Consider the event 
\begin{equation}\label{def:EventDn}
D_n := \{\text{the first $n$ particles disentangle by generation $\sM_n$}\} 
\end{equation}
which holds for all $n$ large enough by Theorem \ref{thm:DisentanglementGWT}. In particular, note that if $D_n$ holds, whenever one of the first $n$ particles reaches generation $\sM_n$, it no longer blocks any of the first $n$ particles. Moreover, observe that when it is possible to jump for particle $i$ from generation $\ell$, the time $\sigma_{\ell}^{(i)}$ until this jump is performed is stochastically dominated by an Exponential-distributed random variable with the smallest possible rate out from generation $\ell$. In other words, the inequality
\[
\sigma_{\ell}^{(i)} \preceq\ \frac{1}{r^{\min}_{\ell}} \om_{\ell+i+1,i}
\] holds for all $i,\ell \in \N$. \\

We construct now a slowed down TASEP $(\tilde{\eta}_t)_{t \geq 0}$ where the $i^{\text{th}}$ particle waits a time of $(r^{\min}_{\ell})^{-1} \om_{\ell+i+1,i}$ to jump from generation $\ell$ to $\ell+1$, but only after particle $i-1$ left generation $\ell+1$. Moreover, we assume without loss of generality that all particles follow the trajectories of the original dynamics $({\eta}_t)_{t \geq 0}$.  As before, let $z^{i}_t$ and $\tilde{z}^{i}_t$ denote the position of the $i^{\text{th}}$ particle in $({\eta}_t)_{t \geq 0}$ and $(\tilde{\eta}_t)_{t \geq 0}$, respectively. The following lemma is immediate from the construction of the two processes.

\begin{lemma} \label{lem:LPPComparison}There exists a coupling $\tilde{\P}$ between the TASEP $(\eta_{t})_{t \geq 0}$ on $\bT$ and the corresponding slowed down dynamics $(\tilde\eta_{t})_{t \geq 0}$ such that for any common initial configuration
\begin{equation}
\tilde{\P}\left( \abs{\tilde z^{i}_t} \leq \abs{z^{i}_t}  \text{ for all } i\in [n] \right) = 1 \ .
\end{equation} 
\end{lemma}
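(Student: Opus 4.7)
The plan is to construct the coupling by feeding both processes with a common source of exponential randomness, and then to verify the domination $|\tilde{z}^{i}_{t}| \le |z^{i}_{t}|$ by induction on the jump events in the slowed-down process.

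Concretely, I assign to each pair $(i,\ell)$ an independent Exponential-$1$ random variable $\omega_{\ell+i+1,i}$, and use these to drive both processes. In the slowed-down dynamics, $(r^{\min}_\ell)^{-1}\omega_{\ell+i+1,i}$ is the waiting time of particle $i$ at generation $\ell$ once it is allowed to leave, i.e. after particle $i$ has arrived at generation $\ell$ and particle $i-1$ has cleared generation $\ell+1$. In the original dynamics, since the particle trajectories are declared common to both processes, particle $i$ is destined to leave generation $\ell$ via a specific edge $(x,y)$; I couple its waiting time (measured from the moment it becomes unblocked on that edge) to be $r_{x,y}^{-1}\omega_{\ell+i+1,i}$, which is at most $(r^{\min}_\ell)^{-1}\omega_{\ell+i+1,i}$ by definition of $r^{\min}_\ell$. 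Reservoir-entry times are identified in the two processes.

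Let $T^i_\ell$ and $\tilde T^i_\ell$ denote the arrival times of particle $i$ at generation $\ell$ in the original and slowed-down dynamics respectively. I prove $T^i_\ell \le \tilde T^i_\ell$ for every relevant $(i,\ell)$ by induction ordered by $\tilde T^i_\ell$. The base case consists of the entry times at the root, where the two processes agree by construction. For the inductive step, write
\[
\tilde T^i_{\ell+1} \;=\; \max\!\bigl(\tilde T^i_\ell,\; \tilde T^{i-1}_{\ell+2}\bigr) \;+\; (r^{\min}_\ell)^{-1}\omega_{\ell+i+1,i},
\]
with the convention that the second argument is absent when $i=1$. By the induction hypothesis, both $\tilde T^i_\ell$ and $\tilde T^{i-1}_{\ell+2}$ dominate their original-process counterparts. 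Moreover, the original dynamics satisfy a strictly weaker blocking condition: the original TASEP only requires the single target site $y$ to be vacant, whereas the slowed-down process demands that particle $i-1$ has left all of generation $\ell+1$. Combined with $r_{x,y}^{-1}\omega_{\ell+i+1,i} \le (r^{\min}_\ell)^{-1}\omega_{\ell+i+1,i}$, this gives $T^i_{\ell+1}\le \tilde T^i_{\ell+1}$, closing the induction.

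The main subtlety is the coupling of the actual holding time of particle $i$ at vertex $x$ in the original dynamics to the single exponential $\omega_{\ell+i+1,i}$, since the original TASEP may produce unsuccessful attempts on other outgoing edges at $x$ before the eventual success along $(x,y)$. I handle this via the strong Markov property and the memorylessness of exponentials: starting from the instant $y$ becomes vacant, the waiting time for the first ring of the Poisson clock on edge $(x,y)$ is Exponential$(r_{x,y})$, and we simply declare it to equal $r_{x,y}^{-1}\omega_{\ell+i+1,i}$ on the joint probability space via the inverse-CDF coupling. Once this identification is built in, the monotone inductive comparison above completes the proof.
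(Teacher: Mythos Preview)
Your argument supplies much more detail than the paper, which simply declares the lemma ``immediate from the construction of the two processes'' and gives no proof. The overall scheme---common exponential variables, an inductive comparison of arrival times $T^i_\ell \le \tilde T^i_\ell$---is the right one. There is, however, a gap in the inductive step.

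You assert that $\tilde T^{i-1}_{\ell+2}$ dominates its ``original-process counterpart'' and that the original blocking condition (target $y$ vacant) is weaker than the slowed-down one (particle $i-1$ has left generation $\ell+1$). But the time $\tau_y$ at which $y$ becomes vacant in the original process is \emph{not} $T^{i-1}_{\ell+2}$: it is $T^{j}_{\ell+2}$, where $j<i$ is the last particle before $i$ to occupy $y$, and $j$ can be strictly smaller than $i-1$ (particle $i-1$ need not pass through $y$, nor even through $x$). On the tree, particles may overtake one another in generation, so it is \emph{false} in general that $\tau_y \le T^{i-1}_{\ell+2}$; hence your ``weaker blocking'' remark, which compares conditions across two different processes, does not by itself yield the needed time inequality $\tau_y \le \tilde T^{i-1}_{\ell+2}$.

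The repair is short. Since $j\le i-1$, apply the induction hypothesis at $(j,\ell+2)$ to get $T^{j}_{\ell+2}\le \tilde T^{j}_{\ell+2}$, and then use the monotonicity $\tilde T^{j}_{\ell+2}\le \tilde T^{i-1}_{\ell+2}$, which holds because the slowed-down dynamics is a one-dimensional TASEP in which the particle order is preserved. This yields $\tau_y\le \tilde T^{i-1}_{\ell+2}$ and closes the induction. Note that $(j,\ell+2)$ is indeed earlier in your induction order, since $\tilde T^{j}_{\ell+2}\le \tilde T^{i-1}_{\ell+2}<\tilde T^{i}_{\ell+1}$ by the recursion for $\tilde T$. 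Your treatment of the holding-time coupling via memorylessness of the $(x,y)$ Poisson clock is fine; once the blocking-time comparison is patched as above, the argument goes through.
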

\begin{proof}[Proof of Lemma \ref{lem:ExpoLPP}]
It suffices to show that the time in which the $n^{\text{th}}$ particle reaches generation $m$ in the slowed down dynamics has the same law as $G_{n + m,n}^{\tilde \om}(A_{\sM_n})$. 
Let $\widetilde G_{m,n}$ be the time the $n^{\text{th}}$ particle jumped $m-n$ times in the slowed down process and note that for all $m,n$
\begin{align*}
\widetilde G_{m,n} =\max(\widetilde G_{m-1,n} ,\widetilde G_{m,n-1})+  \tilde\om_{m, n} \, .
\end{align*} Moreover, 
\[
\widetilde G_{0,m} =\sum_{\ell=1}^{m} \tilde \om_{0, \ell} , \qquad \widetilde G_{\ell,1}  = \sum_{k=1}^{\ell} \tilde\om_{k,1} \, .
\]
The right-hand side of the last three stochastic equalities are the recursive equations and initial conditions for the one-dimensional TASEP, in which particle $i$ waits on site $\ell$  for $(r^{\min}_{\ell})^{-1} \om_{\ell, \ell+1}$ amount of time, after $\ell+1$ becomes vacant.  Note that any maximal path from $(0,1)$ up to $(n, n+\sM_n)$ will never touch the sites for which the environment is $0$, so the passage times in environment \eqref{eq:LPP} and \eqref{eq:LPP2}  coincide with those in environment \eqref{eq:envLPP}, as long as we restrict the set of paths to not cross the line $\ell - i =  \sM_n$. 
For any time $t \geq 0$, on the event $D_n$, this yields 
\begin{equation*} 
P_\bT(  \cJ_{m}(t) \le n, D_n )  \le P_\bT ( \widetilde \cJ_{m}(t) \le n, D_n ) \leq P_\bT (  G_ {n + m,n}^{\tilde \om}(A_{\sM_n}) \ge t ) \, .
\end{equation*} 
 We set $t = \tau_{m}^{n}$ and conclude as $D_n$ holds $\P$-almost surely for all $n$ large enough.
\end{proof}

We use this comparison to an inhomogeneous LPP model to give a rough estimate on the time $\tau_m^n$ for general transition rates. Note that this bound can be refined when we have more detailed knowledge about the structure of the rates.

\begin{lemma} \label{lem:ExpoEstimate}
Recall $\sM_n$ from Theorem \ref{thm:DisentanglementGWT} and fix $\alpha>0$. Then
\be\label{eq:LPPUB}
P_{\bT}\Big(G_{n + \sM_n,n}^{\tilde \om}(A_{\sM_n}) \le \frac{ 4(1+ \alpha) }{ \min\{ \lambda, \min_{ |x| \leq \mathcal{M}_n} r_{x,y} }(n+ \sM_n)  \Big) \ge 1- \eup^{-c n}
\ee holds for some constant $c=c(\alpha)>0$ with $\lim_{\alpha \rightarrow \infty}c(\alpha)=\infty$.
\end{lemma}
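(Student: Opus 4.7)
The strategy is to dominate the inhomogeneous last passage time by a rescaled homogeneous exponential one and then to bound the latter using a union bound combined with a Gamma tail estimate.

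First, I would dominate the environment cell by cell. By \eqref{eq:envLPP} every non-zero entry of $\tilde\om$ in the relevant strip is $\om_{i,j}$ divided by either $\lambda$ or some $r^{\min}_{i-j-1}$ with $0\le i-j-1\le \sM_n-1$. Setting $c_{*}:=\min\bigl(\{\lambda\}\cup\{r^{\min}_{\ell}:0\le \ell\le \sM_n\}\bigr)$ gives $\tilde\om_{i,j}\le c_{*}^{-1}\om_{i,j}$ with $\{\om_{i,j}\}$ i.i.d.\ $\mathrm{Exp}(1)$, and hence
\[
G^{\tilde\om}_{n+\sM_n,n}(A_{\sM_n})\le c_{*}^{-1}\,G^{\om}_{n+\sM_n,n}(A_{\sM_n}).
\]
Since every $r_x$ for $x\in\cZ_\ell$ is a sum of outgoing edge rates each at least $r^{\min}_\ell$, the quantities $c_*$ and $\min_{|x|\le\sM_n}r_x$ are comparable up to a multiplicative factor that may be absorbed into the free parameter $\alpha$. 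It thus suffices to prove
\[
P\bigl(G^{\om}_{n+\sM_n,n}(A_{\sM_n})>4(1+\alpha)(n+\sM_n)\bigr)\le \eup^{-c(\alpha)n}.
\]

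Second, I would exploit that every up-right path from $(1,1)$ to $(n+\sM_n,n)$ has \emph{exactly} $L:=2n+\sM_n-1\le 2(n+\sM_n)$ vertices, so that $\sum_{u\in\pi}\om_u\sim \Gamma(L,1)$ for any fixed such path. A standard Chernoff bound (equivalently, the upper bound in Lemma \ref{lem:RWcomparision} applied with all $c_i=1$) gives, for every $a>1$,
\[
P\Bigl(\textstyle\sum_{u\in\pi}\om_u>aL\Bigr)\le \exp\bigl(-L(a-1-\log a)\bigr),
\]
and choosing $aL=4(1+\alpha)(n+\sM_n)$ forces $a\ge 2(1+\alpha)$, so the Chernoff rate $a-1-\log a$ tends to $+\infty$ with $\alpha$.

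Third, I would take a union bound over the at most $\binom{L-1}{n-1}\le 2^L$ admissible paths. This yields
\[
P\bigl(G^{\om}_{n+\sM_n,n}(A_{\sM_n})>4(1+\alpha)(n+\sM_n)\bigr)\le \exp\bigl(L(\log 2-(a-1-\log a))\bigr),
\]
which is $\exp(-c(\alpha)n)$ with $c(\alpha)$ proportional to $(2(1+\alpha)-1-\log(2(1+\alpha)))-\log 2$ and $c(\alpha)\to\infty$ as $\alpha\to\infty$. The main obstacle I expect is extracting a strictly positive $c(\alpha)$ for \emph{every} $\alpha>0$, since the crude count $2^L$ only beats the Chernoff rate once $\alpha$ exceeds a fixed threshold $\alpha_0>0$. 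I would handle small $\alpha$ either by the tighter Catalan-type count of paths confined to the strip $A_{\sM_n}$ (subexponential in $L$ when $\sM_n=o(n)$, which is the generic situation under \hyperref[def:ExponentialScaling]{\normalfont(ED)}), or by citing Johansson's sharp upper-tail large deviation bound for exponential LPP, whose rate function is strictly positive whenever the target exceeds the limit shape $(\sqrt{n+\sM_n}+\sqrt n)^{2}\le 4(n+\sM_n)$. Either refinement delivers a strictly positive $c(\alpha)$ for every $\alpha>0$.
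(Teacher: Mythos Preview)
Your reduction to homogeneous exponential LPP is exactly the paper's approach: the paper also bounds $\tilde\om$ cell by cell by $(\min_{|x|\le\sM_n}r_x)^{-1}$ times an i.i.d.\ $\mathrm{Exp}(1)$ environment (after first enlarging to the diagonal point $(n+\sM_n,n+\sM_n)$), and then invokes an external large deviation estimate---Theorem~4.1 in \cite{S:CouplingInterfaces} rather than Johansson---for the homogeneous last passage time. Your fallback (b) to a sharp upper-tail LPP bound is thus essentially the paper's proof.

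The elementary union bound you add is correct and self-contained for large $\alpha$, which is a nice bonus the paper does not offer. However, your alternative (a) for small $\alpha$ does not work: confining paths to a diagonal strip of width $\sM_n$ still leaves on the order of $\bigl(2\cos(\pi/(\sM_n+2))\bigr)^{L}$ lattice paths (transfer matrix on the path graph $P_{\sM_n+1}$), and once $\sM_n\to\infty$ this is $2^{L(1-o(1))}$, not subexponential in $L$. Also, $\sM_n=o(n)$ is not ``generic under \hyperref[def:ExponentialScaling]{\normalfont(ED)}'': for constant rates $\sM_n$ is of order $n$ by Example~\ref{ex:intro}. So for small $\alpha$ you genuinely need the sharp LPP rate function, and at that point your argument collapses to the paper's citation.
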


\begin{proof} Let $G_{m,n}^{(1)}$ be the passage time up to $(m, n)$ in an i.i.d.\ environment with Exponential-$1$-distributed  weights. Observe that we have the stochastic domination
\begin{equation}
\label{eq:tagged1}
  G_{n + \sM_n,n}^{\tilde \om}(A_{\sM_n}) \preceq G_{n+\sM_n, n+\sM_n}^{\tilde \om}(A_{\sM_n})\preceq \big(\min\limits_{\abs{x}\leq \sM_n  }r_{x} \big)^{-1} G_{n+\sM_n, n+\sM_n}^{(1)}\, .
\end{equation}
For all $\alpha>0$, we obtain from Theorem 4.1 in  \cite{S:CouplingInterfaces} that
\be\label{eq:tagged2}
P_{\bT}\Big(  G_{M,M}^{(1)} \le 4(1+\alpha)M  \Big) \ge 1- \eup^{-c M}
\ee holds for some $c=c(\alpha)>0$ with $\lim_{\alpha \rightarrow \infty}c(\alpha)=\infty$ and all $M \in \N$, where the constant $c(\alpha)$ is an explicitly known rate function.  Combine \eqref{eq:tagged1} and  \eqref{eq:tagged2} to conclude.
\end{proof}

\section{Proof of the current theorems} \label{sec:CurrentTheorems}

We  have now all tools to prove Theorem \ref{thm:LinearCurrentLSPECIAL} and Theorem \ref{thm:CurrentfixedTSPECIAL}. In fact, we will prove more general theorems which allow for any transition rates $(r_{x,y})$ satisfying the assumptions  \hyperref[def:UniformElliptic]{\normalfont(UE)} and \hyperref[def:ExponentialScaling]{\normalfont(ED)}. We start with a  generalization of Theorem~\ref{thm:LinearCurrentLSPECIAL} on the current in a time window $[t_{\textup{low}}, t_{\textup{up}}]$. Recall the notation from Section \ref{sec:ModelResults}. In particular, recall \eqref{def:R_xdInverse}, and set
\begin{equation}
 \rho_{\ell} := \min_{i \leq \ell} \max_{x \in \cZ_i}  r_{x} \, .
\end{equation}
For the lower bound of the time window,  we define $$t_{\textup{low}} := \max\big( t_1^{\textup{low}}, t_2^{\textup{low}} \big)$$ with
\begin{align}  \label{def:LowerBoundTime1}
t_1^{\textup{low}} &:= R^{\max}_{0,\ell_n}\big(1- 2\big(R^{\max}_{0,\ell_n}\rho_{\ell_n}\big)^{-\frac{1}{3}}\log R^{\max}_{0,\ell_n} \big) \\ \label{def:LowerBoundTime2} t_2^{\textup{low}} &:=\frac{\ell_n}{2}  \exp\Big(\frac{1}{\ell_n+1}\sum_{i=0}^{\ell_n} \log R^{\max}_{i} \Big) \,  .
\end{align}
Note that both terms in the maximum can give the main contribution in the definition of $t_{\textup{low}}$, depending on the rates. For the upper bound, we define
\be\label{def:thetaRoot}
			\theta := \liminf_{n\to \infty} (\min_{\sM_n < i \leq \ell_n} r^{\min}_{i}) R^{\min}_{\sM_n,\ell_n} \in [0,\infty]
			\ee
and fix some $\delta \in (0,1)$. We let $t_{\textup{up}}=t_{\textup{up}}(\delta)$ be
\begin{equation} \label{def:UpperBoundTime}
t_{\textup{up}} :=  \frac{5(n+\sM_n)}{\min\limits_{\abs{x}\leq \sM_n  }r_{x}} + \left[\mathds{1}_{\{\theta<\infty\}}\left(1+\delta-\frac{2\log \delta}{\theta\delta}\right) + \mathds{1}_{\{\theta=\infty\}}(1+\theta_n)\right]R^{\min}_{\sM_n,\ell_n} 
\end{equation}
with some sequence $(\theta_n)_{n \in \N}$ tending to $0$ satisfying \begin{equation}\label{def:ThetaN}
\liminf_{n \rightarrow \infty} \frac{1}{\theta_n} (\min_{\sM_n < i \leq \ell_n} r^{\min}_{i}) R^{\min}_{\sM_n,\ell_n} = \infty 
\end{equation} when $\theta=\infty$. Consider the first $n$ particles which enter the tree, starting with the configuration which contains only empty sites.
The following theorem states that we see at least an aggregated current in $[t_{\textup{low}},t_{\textup{up}}]$ of order $n$.
\begin{theorem}
\label{thm:LinearCurrentL} Suppose that  \hyperref[def:UniformElliptic]{\normalfont(UE)} and \hyperref[def:ExponentialScaling]{\normalfont(ED)} hold and let $(\ell_n)_{n \in \N}$ be a sequence of generations  with $\ell_n \geq \sM_n$ for all $n \in \N$. Fix $\delta \in(0,1)$ and let $t_{\textup{low}}$ and $t_{\textup{up}}=t_{\textup{up}}(\delta)$ be given in \eqref{def:LowerBoundTime1}, \eqref{def:LowerBoundTime2} and \eqref{def:UpperBoundTime}. Then $\P$-almost surely
\begin{equation}
\lim_{n\to \infty} \cJ_{\ell_n}(t_{\textup{low}}) =0\, , \qquad  \liminf_{n\to \infty}\frac{1}{n}  \cJ_{\ell_n}(t_{\textup{up}}) \ge 1- \delta  \, .
\end{equation}
\end{theorem}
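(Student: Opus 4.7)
The plan is to establish the two assertions separately using the two couplings from Section~\ref{sec:Couplings}. For the vanishing of $\cJ_{\ell_n}(t_{\textup{low}})$, we dominate from above via the independent random walks coupling (Lemma~\ref{lem:CouplingToIRW}), reducing the problem to showing that no RW in the coupled system reaches generation $\ell_n$ by time $t_{\textup{low}}$. The travel time of a single independent walk to $\ell_n$ is $X=\sum_{i=0}^{\ell_n} R^{\max}_i \om_i$, to which we apply Lemma~\ref{lem:RWcomparision} with $c_i = 1/R^{\max}_i$, $S=R^{\max}_{0,\ell_n}$, and $c=\rho_{\ell_n}$. The two definitions $t_1^{\textup{low}}$ and $t_2^{\textup{low}}$ are designed exactly so that the two competing upper bounds in that lemma each give a tail estimate that is summable in $n$: the first bound, tuned with $\delta \sim (cS)^{-1/3}\log S$, handles the regime where $t_1^{\textup{low}}$ is close to the mean $S$; the second (Stirling-type) bound handles the regime where $t_2^{\textup{low}}$ is much smaller than $S$ and contributes a factor of order $\eup^{-c\ell_n}$. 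Since the number of particles having entered by time $t_{\textup{low}}$ is at most Poisson of mean $\lambda t_{\textup{low}}$, a union bound combined with Borel--Cantelli then yields the first claim.

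For the almost-sure lower bound on $\cJ_{\ell_n}(t_{\textup{up}})$, the plan is to compare with the slowed-down TASEP (Lemma~\ref{lem:LPPComparison}), which lags behind the true process, so it suffices to prove the bound for the slow version. The travel time of particle $i\in[n]$ to $\ell_n$ is decomposed as $T_i = T_i^{(1)}+T_i^{(2)}$, where $T_i^{(1)}$ is the time to reach $\sM_n$ and $T_i^{(2)}$ the remaining time to reach $\ell_n$. For $T_n^{(1)}$, the LPP representation (Lemma~\ref{lem:ExpoLPP}) together with Lemma~\ref{lem:ExpoEstimate} yields $T_i^{(1)} \leq \tfrac{5(n+\sM_n)}{\min_{|x|\leq \sM_n}r_x}$ simultaneously for all $i\leq n$ with probability $1-\eup^{-cn}$, accounting for the first term in $t_{\textup{up}}$.

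For $T_i^{(2)}$, the disentanglement theorem (Theorem~\ref{thm:DisentanglementGWT}) guarantees that $\P$-almost surely, for large $n$, the first $n$ particles occupy distinct subtrees after $\sM_n$ and thus evolve as independent processes in the slow TASEP. Each $T_i^{(2)}$ is stochastically dominated by a sum of independent exponentials with mean $R^{\min}_{\sM_n,\ell_n}$, and Lemma~\ref{lem:RWcomparision} provides two regimes: when $\theta<\infty$ the lower bound of the lemma with $\delta'\in(0,1)$ gives $P(T_i^{(2)} > (1+\delta-\tfrac{2\log \delta}{\theta\delta}) R^{\min}_{\sM_n,\ell_n})\leq \delta/2$ after tuning constants, while when $\theta=\infty$ the usual LLN concentration gives the sharper window $(1+\theta_n)R^{\min}_{\sM_n,\ell_n}$ for any $\theta_n$ satisfying \eqref{def:ThetaN}. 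Combining this with a Hoeffding-type bound applied to the independent indicators $\mathds{1}\{T_i^{(2)}>T^*\}$ shows that at most $\delta n$ particles can exceed the threshold, so at least $(1-\delta)n$ reach $\ell_n$ by time $t_{\textup{up}}$, and Borel--Cantelli concludes.

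The main obstacle is arranging both sides of the argument so that the error probabilities are summable in $n$ even though $\ell_n, R^{\max}_{0,\ell_n}, R^{\min}_{\sM_n,\ell_n}$ may grow very rapidly. In the upper bound, this forces the delicate two-regime split in $t_{\textup{low}}$ and the exact choice $\delta\sim (cS)^{-1/3}\log S$; in the lower bound, it requires treating $\theta<\infty$ and $\theta=\infty$ separately to obtain the correct prefactor of $R^{\min}_{\sM_n,\ell_n}$. Tracking how these factors interact with the LPP bound on $T_i^{(1)}$, and verifying that the disentanglement event is compatible with the concentration step, is where the main bookkeeping lies.
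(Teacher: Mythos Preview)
Your proposal is correct and follows essentially the same strategy as the paper's proof: the same pair of couplings for the two directions, the same LPP estimate (Lemmas~\ref{lem:ExpoLPP} and~\ref{lem:ExpoEstimate}) for $T^{(1)}$, the same use of the disentanglement theorem to obtain independence for $T^{(2)}$ together with the $\theta<\infty$ versus $\theta=\infty$ split via Lemma~\ref{lem:RWcomparision}, and the same concentration-on-indicators plus Borel--Cantelli conclusion. The only cosmetic differences are your tuning $\delta\sim(cS)^{-1/3}\log S$ in the $t_1^{\textup{low}}$ regime (the paper takes $\delta=(cS)^{-1/2}$; both choices work with the given form of $t_1^{\textup{low}}$) and your labeling the concentration step ``Hoeffding'' where the paper writes out an exponential-moment Chebyshev bound on the Bernoulli indicators directly.
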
 
Note that Theorem \ref{thm:LinearCurrentL} indeed implies Theorem \ref{thm:LinearCurrentLSPECIAL} for rates which satisfy \eqref{def:ExpDecNew}.

\begin{proof} We start with the lower bound involving $t_{\textup{up}}$.
Recall $D_n$ from \eqref{def:EventDn} as the event that the first $n$ particles are disentangled at generation $\sM_n$, and  $\tau_n^{\sM_n}$ from \eqref{def:TauUp} as the first time such that the first $n$ particles have reached generation $\sM_n$. Set
\[
t_1 = 5(n+\sM_n)\big(\min\limits_{\abs{x}\leq \sM_n  }r_{x}\big)^{-1}
\] and define $t_2:=t_{\textup{up}}-t_1$. Combining Theorem \ref{thm:DisentanglementGWT},  Lemma \ref{lem:ExpoLPP} and Lemma \ref{lem:ExpoEstimate}, we see that
\begin{equation}\label{eq:HighProbEvent}
D_n \cap \{\tau_{\sM_n}^n   \leq t_1\}
\end{equation}
holds $\P$-almost surely for all $n$ sufficiently large. In words, this means that all particles have reached generation $\sM_n$ by time $t_1$ and perform independent random walks after level $\sM_n$. We claim that it suffices to show that
\begin{equation}\label{eq:DecouplingT2}
p:= P_{\bT}\left( \sum_{i=\sM_n}^{\ell_n} \frac{\om_i}{r^{\min}_{i}}> t_2\right) < \delta
\end{equation} holds, where $(\om_i)$ are independent Exponential-$1$-distributed random variables. To see this, let $B_i$ be the indicator random variable of the event that the $i^{\normalfont th}$ particle did not reach level $\ell_n$ by time $t_{\textup{up}}$. From \eqref{eq:DecouplingT2}, we obtain that $(B_i)_{i \in [n]}$ are stochastically dominated by independent Bernoulli-$p$-random variables when conditioning on the event in  \eqref{eq:HighProbEvent}. Hence, we obtain that
\begin{align*}\label{eq:DominationIndependents}
P_{\bT}\left(  \cJ_{\ell_n}(t_{\textup{up}}) < (1-\delta) n \ \Big| \  D_n , \tau_{\sM_n}^n \leq t_1 \right) & \leq P_{\bT}\left( \sum_{i=1}^{n} B_i \geq \delta n \ \Big| \ D_n , \tau_{\sM_n}^n \leq t_1 \right) \notag \\ 
&\leq \eup^{-\delta n}(1+\eup^{pn})
\end{align*} holds using Chebyshev's inequality for the second step. Together with a Borel--Cantelli argument and \eqref{eq:HighProbEvent}, this proves the claim. \\

In order to verify \eqref{eq:DecouplingT2}, we distinguish two cases depending on the value of $\theta$ defined in \eqref{def:thetaRoot}. Suppose that $\theta< \infty$ holds. Then by Lemma \ref{lem:RWcomparision} and a calculation, we obtain that
\begin{align*}
P_{\bT}\left( \sum_{i=\sM_n}^{\ell_n} \frac{\om_i}{r^{\min}_{i}}> t_2\right) &< \exp\Big(  (\min_{\sM_n < i \leq \ell_n} r^{\min}_{i}) R^{\min}_{\sM_n,\ell_n} \big(-\delta-\delta^2+\frac{2\log \delta}{\theta} - \log (1- \delta)\big) \Big) \\
&\leq \exp\left(  \theta \big(-\delta \big(1-{\theta}^{-1}\log \delta\big) + \delta^2\big) \right) \leq \delta
\end{align*} holds for all $n$ large enough and $\delta \in (0,1)$, using the Taylor expansion of the logarithm for the second step. Similarly, when $\theta= \infty$, we  apply Lemma \ref{lem:RWcomparision} to see that
\begin{equation}\label{eq:SecondEstimateLBCurrent}
P_{\bT}\left( \sum_{i=\sM_n}^{\ell_n} \frac{\om_i}{r^{\min}_{i}}> t_2\right) \leq \exp\left(  (\min_{\sM_n < i \leq \ell_n} r^{\min}_{i}) R^{\min}_{\sM_n,\ell_n} \big(-\delta \big(\theta_n\log \delta\big) + \delta^2\big) \right) 
\end{equation} holds for all $n$ large enough and some sequence $(\theta_n)_{n \in \N}$ according to \eqref{def:ThetaN}. In this case, we obtain that for any fixed $\delta \in (0,1)$, the right-hand side in \eqref{eq:SecondEstimateLBCurrent} converges to $0$ when $n \rightarrow \infty$. Thus,  we obtain that \eqref{eq:DecouplingT2} holds for both cases depending on $\theta$, which gives the lower bound. \\

Next, for the upper bound, we use a comparison to the independent random walks $(\tilde{\eta}_t)_{t \geq 0}$ defined in Section \ref{sec:IndependentRWs}. By Lemma \ref{lem:CouplingToIRW}, 
\begin{equation*}
P_{\bT}\left( \cJ_{\ell_n}(t_{\textup{low}}) \le  \delta \right) \geq P_{\bT}\left( \tilde\cJ_{\ell_n}(t_{\textup{low}}) \le  \delta \right)
\end{equation*}
holds  for all $\delta>0$, where $(\tilde\cJ_t)_{t \geq 0}$ denotes the current with respect to $(\tilde{\eta}_t)_{t \geq 0}$.
Fix some $\delta>0$ and let $(\om_i)_{i \in \N_0}$ be independent Exponential-$1$-distributed random variables. We claim that the probability for a particle in $(\tilde{\eta}_t)_{t \geq 0}$ to reach level $\ell_n$ is bounded from above by
\begin{equation}\label{eq:CurrentSecondEstimate}
P_{\bT}\bigg(\sum_{i=0}^{\ell_n-1} \frac{\om_i}{r_i^{\max}} \leq t_{\textup{low}} \bigg) \leq \frac{1}{2\lambda t_{\textup{low}} } 
\end{equation} for all $n$ sufficiently large, where we recall that particles enter the tree at rate $\lambda>0$. To see this, we distinguish two cases. Recall the construction of $t_{\textup{low}}$ in \eqref{def:LowerBoundTime1} and \eqref{def:LowerBoundTime2}, and assume that $t_{\textup{low}}=t_1^{\textup{low}}$. By the first upper bound in Lemma \ref{lem:RWcomparision}, 
\begin{equation*}
t_{\textup{low}}P_{\bT}\bigg(\sum_{i=0}^{\ell_n} \frac{\om_i}{r_i^{\max}} \leq t_{\textup{low}} \bigg) \leq  t_1^{\textup{low}}
\exp\left({ \delta  \rho_{\ell_n} t_1^{\textup{low}}-\rho_{\ell_n} R_{0,\ell_n}^{\max} \log(1+\delta)}\right)
\end{equation*} holds for all $\delta \in (0,1)$. For $\delta=(\rho_{\ell_n} R_{0,\ell_n}^{\max})^{-1/2}$ and using the Taylor expansion of the logarithm, we see that the right-hand side in \eqref{eq:CurrentSecondEstimate} converges to $0$ when $n \rightarrow\infty$.  Similarly, for $t_{\textup{low}}=t_2^{\textup{low}}$ the second upper bound in Lemma \ref{lem:RWcomparision} yields 
\begin{equation*}
t_{\textup{low}} P_{\bT}\bigg(\sum_{i=0}^{\ell_n} \frac{\om_i}{r_i^{\max}} \leq t_{\textup{low}} \bigg) \leq  
t_2^{\textup{low}} \exp\left({\ell_n (1+ \log t_2^{\textup{low}}-\log \ell_n ) - \sum_{i=0}^{\ell_n}\log R^{\max}_{i}} \right) \, ,
\end{equation*} where the right-hand side converges to $0$ for $n\rightarrow \infty$ using the definition of $t_2^{\textup{low}}$ and comparing the leading order terms. Since particles enter in both dynamics at the root at rate $\lambda$, note that for all $n$ large enough, at most $\frac{5}{4}\lambda t_{\textup{low}}$ particles have entered by time~$t_{\textup{low}}$. By  Chebyshev's inequality together with \eqref{eq:CurrentSecondEstimate},  $\P$-almost surely no particle has reached generation $\ell_n$ by time~$t_{\textup{low}}$ for all $n$ sufficiently large. 
\end{proof}

\begin{example}[$d$-regular tree, Constant rates (C)] \label{ex:TwindowConstant}  Let us choose $\ell_n$ to be $\ell_n = 1+2\sM_n$ for all $n\in \N$, where we recall from Example \ref{ex:intro} that $\sM_n$ is of order $n$ for constant rates. Then 
\[
R^{\min}_{\sM_n, \ell_n} = \frac{\sM_n}{d-1} = R^{\max}_{\sM_n, \ell_n}, \quad \rho_{\ell_n} = d-1 \, ,
\]
which give $\theta=\infty$. Hence, if we choose $\theta_n = 1/\log n$, we have 
\[
t_{\textup{up}} = \frac{5(n+\sM_n)}{d-1} + \Big(1 + \frac{1}{\log n}\Big) \frac{\sM_n}{d-1} = \frac{1}{d-1}(5n + 6 \sM_n) + o(n) \, .
\]
For $t_{\textup{low}}$ we have 
\[
t_{\textup{low}} = \max\Big\{\frac{2\sM_n+1}{d-1}(1 - o(1)), \frac{2\sM_n+1}{2}\exp\{ - \log(d-1)\} \Big\}=\frac{2\sM_n+1}{d-1}(1 - o(1)) \, .
\]
\end{example}
For more examples when the rates decay polynomially or exponentially we refer to Section~\ref{sec:RegularTreeTASEP}.

Now let $t$ be a fixed time horizon and define an interval $[L_{\textup{low}}, L_{\textup{up}} ]$ of generations. 
 Recall $\sM_{n}$ from Theorem \ref{thm:DisentanglementGWT} and define the generations
\begin{equation}\label{def:UpperLowerGeneration}
L_{\textup{low}} := \sM_{n_{t}}  \quad \text{and} \quad L_{\textup{up}}:=  \min(L^{\textup{up}}_1,L^{\textup{up}}_2) + 1
\end{equation}
for $n_t$ from  \eqref{eq:goodC} and, recalling \eqref{def:minandmax},
\begin{align*}
L_1^{\textup{up}} := \inf \Big\{ \ell : \log \ell - \frac{1}{\ell+2}\sum_{i=1}^\ell \log r_{i}^{\max} \ge  \log t +2  \Big\} \, , \ 
L_2^{\textup{up}} := \inf\Big\{ \ell :  R_{0,\ell}^{\max} \ge t+t^\frac{2}{3} \Big\}  \, .
\end{align*} 
Since $r_{i}^{\max}$ is bounded from above uniformly in $i$, $L^{\textup{up}}_1$ and $L^{\textup{up}}_2$ are both finite.

The following theorem is the dual result of Theorem \ref{thm:LinearCurrentL}. Recall $n_t$ from \eqref{eq:goodC}. We are interested in a window of generations $[L_{\textup{low}}, L_{\textup{up}} ]$ where we can locate the first $n_t$ particles.
\begin{theorem}
\label{thm:CurrentfixedT} Suppose that  \hyperref[def:UniformElliptic]{\normalfont(UE)} and \hyperref[def:ExponentialScaling]{\normalfont(ED)} hold.  Then the aggregated current through generations $L_{\textup{low}}$ and $L_{\textup{up}}$ satisfies $\P$-almost surely
\begin{equation}\label{eq:CurrentfixedBounds}
\limsup_{t\to \infty} \cJ_{L_{\textup{up}}}(t) =0 \,  , \qquad  \liminf_{t\to \infty}\frac{1}{n_{t}} \cJ_{L_{\textup{low}}}(5 t)  \ge 1 \, .
\end{equation}
\end{theorem}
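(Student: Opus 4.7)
\medskip

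\noindent\textbf{Proof proposal.} The plan is to prove the two halves of \eqref{eq:CurrentfixedBounds} separately, using the two different couplings from Section \ref{sec:Couplings} (independent walkers for the upper bound on the current at deep levels, and the inhomogeneous LPP comparison for the lower bound at shallow levels). This mirrors the dual theorem \ref{thm:LinearCurrentL}: where that result fixes a generation $\ell_n$ and sandwiches the number of particles in a time window, here we fix the time horizon and sandwich the location of the front of particles. The role of $\sM_n$ and $\tau_n^{\sM_n}$ from the earlier proof is now played by $L_{\textup{low}} = \sM_{n_t}$ and the LPP time $G^{\tilde\omega}_{n_t+\sM_{n_t},n_t}(A_{\sM_{n_t}})$, and the definition of $n_t$ in \eqref{eq:goodC} is engineered precisely so that the reaching time of the $n_t^{\text{th}}$ particle at $L_{\textup{low}}$ can be compared to $t$.

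For the lower bound $\liminf_{t\to\infty} n_t^{-1} \cJ_{L_{\textup{low}}}(5t) \ge 1$, I would first apply Theorem \ref{thm:DisentanglementGWT} to the first $n_t$ particles: with $\P$-probability tending to one, these particles have disentangled by generation $L_{\textup{low}} = \sM_{n_t}$, so the event $D_{n_t}$ from \eqref{def:EventDn} holds. On this event, Lemma \ref{lem:ExpoLPP} gives the domination $\tau_{L_{\textup{low}}}^{n_t} \leq G^{\tilde\omega}_{n_t + L_{\textup{low}}, n_t}(A_{\sM_{n_t}})$, and Lemma \ref{lem:ExpoEstimate} applied with any fixed $\alpha \in (0, 1/4]$ bounds the latter by $4(1+\alpha)(n_t + \sM_{n_t}) / \min_{|x| \le \sM_{n_t}} r_x$ with probability at least $1 - e^{-c n_t}$. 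By the very definition of $n_t$ in \eqref{eq:goodC}, this last quantity is at most $4(1+\alpha) t \le 5t$. Combining, with exponentially high probability all first $n_t$ particles have reached $L_{\textup{low}}$ by time $5t$, i.e.\ $\cJ_{L_{\textup{low}}}(5t) \ge n_t$. A Borel--Cantelli argument along a subsequence $t_k = 2^k$ (noting that $n_t$ is polynomial-in-$t$ so $n_{t_k}$ grows sufficiently fast for summability), together with monotonicity of $\cJ_{L_{\textup{low}}}(\cdot)$ in time, closes this half.

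For the upper bound $\limsup_{t \to \infty} \cJ_{L_{\textup{up}}}(t) = 0$, I would couple the TASEP with the independent random walks $(\tilde{\eta}_t)_{t \ge 0}$ defined in Section \ref{sec:IndependentRWs}, so that by Lemma \ref{lem:CouplingToIRW}
\[
\cJ_{L_{\textup{up}}}(t) \le \tilde{\cJ}_{L_{\textup{up}}}(t).
\]
A straightforward Poisson tail bound shows that the total number of particles entering the tree up to time $t$ is at most $2\lambda t$ with probability at least $1 - e^{-c t}$, so it suffices to bound the probability that a single independent walker reaches $L_{\textup{up}}$ by time $t$. This is exactly the quantity controlled by Lemma \ref{lem:RWcomparision} applied with $c_i = r_i^{\max}$ and $\ell = L_{\textup{up}}$. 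The two definitions $L_1^{\textup{up}}$ and $L_2^{\textup{up}}$ are calibrated to the two upper bounds in that lemma: the first bound in Lemma \ref{lem:RWcomparision} applied with $\delta = t^{-1/3}$ turns the condition $R^{\max}_{0, L_2^{\textup{up}}} \ge t + t^{2/3}$ into a stretched-exponential decay, while the second (Chebyshev with uniform rate) turns $\log \ell - (\ell+1)^{-1} \sum \log r_i^{\max} \ge \log t + 2$ into a bound of the form $e^{-L_1^{\textup{up}} - 2}$ per walker, as can be checked by substituting the threshold into the exponent. Multiplying by $2\lambda t$ and summing over $t_k = 2^k$ yields a summable series; the monotonicity $L_{\textup{up}}(t) \ge L_{\textup{up}}(t_k)$ for $t \in [t_k, t_{k+1}]$ together with $\cJ_\ell(t)$ being non-increasing in $\ell$ and non-decreasing in $t$ lets me sandwich $\cJ_{L_{\textup{up}}(t)}(t) \le \cJ_{L_{\textup{up}}(t_k)}(t_{k+1})$ and conclude by Borel--Cantelli.

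The main obstacle I expect is verifying that the two thresholds $L_1^{\textup{up}}$ and $L_2^{\textup{up}}$ really do match the regimes of the two upper bounds in Lemma \ref{lem:RWcomparision}: the algebra of checking that the threshold condition for $L_1^{\textup{up}}$ produces a bound summable in $t$ along $t_k = 2^k$ is somewhat delicate because the leading term only beats $\log t$ by a constant, so one must be slightly careful about how large $L_1^{\textup{up}}$ must become. A minor additional subtlety is that the assumption $\ell_n \ge \sM_n$ required implicitly for Lemma \ref{lem:ExpoLPP} is already built into the definition $L_{\textup{low}} = \sM_{n_t}$, so no separate verification is needed there; and the deduction of Theorem \ref{thm:CurrentfixedTSPECIAL} as a special case under \eqref{def:ExpDecNew} then reduces to plugging the exponential rate bound into the thresholds.
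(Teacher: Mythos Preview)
Your approach matches the paper's: the independent-walker comparison (Lemmas \ref{lem:CouplingToIRW} and \ref{lem:RWcomparision}) for $L_{\textup{up}}$, and the LPP comparison (Lemmas \ref{lem:ExpoLPP} and \ref{lem:ExpoEstimate}) together with the definition of $n_t$ for $L_{\textup{low}}$.

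The one step that does not work as written is your Borel--Cantelli for the lower bound. You propose a dyadic subsequence $t_k=2^k$ together with ``monotonicity of $\cJ_{L_{\textup{low}}}(\cdot)$ in time'', but $L_{\textup{low}}=\sM_{n_t}$ itself depends on $t$: for $t\in[t_k,t_{k+1}]$ the natural sandwich (using that $\cJ_\ell(s)$ is non-increasing in $\ell$ and non-decreasing in $s$, and that $n_t$ and $\sM_{n_t}$ are non-decreasing in $t$) only yields
\[
\frac{\cJ_{L_{\textup{low}}(t)}(5t)}{n_t}\;\ge\;\frac{\cJ_{L_{\textup{low}}(t_{k+1})}(5t_k)}{n_{t_{k+1}}}\,,
\]
and the right-hand side is \emph{not} the quantity your subsequence event controls (which is $\cJ_{L_{\textup{low}}(t_k)}(5t_k)\ge n_{t_k}$, with matching indices). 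The paper sidesteps this entirely: it takes any sequence $(t_i)$ realising the liminf in \eqref{eq:CurrentfixedBounds}, thins it so that $n_{t_i}$ is strictly increasing---hence $n_{t_i}\ge i$, which already makes $\sum_i e^{-cn_{t_i}}$ summable---and applies Borel--Cantelli directly along that sequence, with no interpolation to intermediate $t$ needed. Relatedly, your parenthetical ``$n_t$ is polynomial-in-$t$'' holds only under the extra hypothesis \eqref{def:ExpDecNew} of the special case, not under \hyperref[def:UniformElliptic]{\normalfont(UE)} and \hyperref[def:ExponentialScaling]{\normalfont(ED)} alone; the paper's thinning trick makes no growth assumption on $n_t$ beyond $n_t\to\infty$.

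A smaller point: for the $L_2^{\textup{up}}$ estimate the paper takes $\delta = 2(t^{2/3}\rho_{L_2^{\textup{up}}})^{-1}\log t$ rather than your $\delta=t^{-1/3}$; the extra $\log t$ is what guarantees the exponent beats the prefactor $t$ after the Taylor expansion of $\log(1+\delta)$, whereas with $\delta=t^{-1/3}$ the gain $c\delta(S-t)$ and the loss $cS\delta^2/2$ are of the same order $ct^{1/3}$ and may cancel.
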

Note that Theorem \ref{thm:CurrentfixedT} implies Theorem \ref{thm:CurrentfixedTSPECIAL} for rates which satisfy \eqref{def:ExpDecNew}, keeping in mind that in the setup of Theorem \ref{thm:CurrentfixedTSPECIAL}, there exist some $c>0$ such that $n_{5t} \leq c n_t$ for all $t \geq 0$.

\begin{proof} Let us start with the bound involving $L_{\textup{up}}$. Let $(\om_i)_{i \in \N_0}$ be independent Exponential-$1$-distributed random variables. Note that $\P$-almost surely, no more than $2\lambda t$ particles have entered the tree by time $t$ for all $t>0$ large enough. Using a similar argument as after \eqref{eq:CurrentSecondEstimate} in the proof of Theorem \ref{thm:LinearCurrentL}, it suffices to show that 
\begin{align}\label{eq:Llowbound}
\lim_{t \rightarrow \infty} 2\lambda t  P_{\bT}\left( \sum_{i=0}^{L_{\textup{up}}} \frac{\om_i}{r_i^{\max}} \leq t\right) = 0\, .
\end{align} By Lemma \ref{lem:RWcomparision} and using the definition of $L_1^{\textup{up}}$
\begin{align*}
 t P_{\bT}\left( \sum_{i=0}^{L_1^{\textup{up}}} \frac{\om_i}{r_i^{\max}} \leq t \right) \leq  \exp\left({ L_1^{\textup{up}}(1+\log t-\log L_1^{\textup{up}}) +\log t + \sum_{i=0}^{L_1^{\textup{up}}} \log r_{i}^{\max}}\right) \, ,
\end{align*} where the right-hand side converges to $0$ for $t\rightarrow \infty$.
Moreover, by Lemma \ref{lem:RWcomparision}
\begin{align}\label{eq:SecondEstimatePropL}
 t P_{\bT}\left( \sum_{i=0}^{L^{\textup{up}}_{2}} \frac{\om_i}{r_i^{\max}} \leq t \right) \leq   \frac{t \exp(\delta \rho_{L^{\textup{up}}_{2}} t)}{\exp\left({\rho_{L^{\textup{up}}_{2}}R^{\max}_{0,L^{\textup{up}}_{2}}}\log(1+\delta)\right)}
\end{align}
 holds for any $\delta \in (0,1)$ which may also depend on $t$. Note that $\sup_{\ell \in \N} \rho_\ell< \infty$ holds by our assumptions that the transition rates are uniformly bounded from above. Set $\delta= 2 (t^{2/3}\rho_{L^{\textup{up}}_{2}})^{-1} \log t$ for all $t$ large enough. Using the definition of $L^{\textup{up}}_{2}$ and the Taylor expansion of the logarithm, we conclude that the right-hand side in \eqref{eq:SecondEstimatePropL} converges to $0$ for $t\rightarrow \infty$. Since $L_{\textup{up}}=\min(L^{\textup{up}}_{1},L^{\textup{up}}_{2})$, we obtain \eqref{eq:Llowbound}. \\
 
For the remaining bound in Theorem \ref{thm:CurrentfixedT}, recall the slowed down exclusion process from Section \ref{sec:LPP}.  By Lemma \ref{lem:ExpoLPP} and Lemma \ref{lem:ExpoEstimate}, note that  for some $c>0$ 
\begin{align}\label{eq:CurrentReduced}
P_{\bT}( \cJ_{L_{\textup{low}}}( 5 t) < n_{t} ) \leq P_{\bT}\Big(  G^{\tilde{\om}}_{n_t+L_{\textup{low}},n_t} \geq 5 t   \Big)
 \le \eup^{-c n_{t}}
 \end{align} holds $\P$-almost surely when $t$ is sufficiently large. 
 Consider a sequence of times $(t_i)_{i \in \N}$ such that $t_i \rightarrow \infty$ as $i \rightarrow \infty$ and
\begin{equation}
\label{eq:LiminfStep} \lim_{i \rightarrow \infty}\frac{\cJ_{L_{\textup{low}}(5t_i)}(5t_i)}{n_{t_i}} = \liminf_{t \rightarrow \infty}  \frac{\cJ_{L_{\textup{low}}(5t)}(5t)}{n_t}
\end{equation} 
By possibly removing some of the $t_i$'s, we can assume without loss of generality that $n_{t_i} < n_{t_{i+1}}$. This way,  $n_{t_i} \geq i$ for all $i\in \N$. Therefore by \eqref{eq:CurrentReduced} and the Borel--Cantelli lemma, we obtain that
\begin{align*}
\cJ_{L_{\textup{low}}}( 5 t_i) \geq n_{t_i} 
 \end{align*} holds almost surely for all $i$ large enough. Theorem \ref{thm:CurrentfixedT} follows from \eqref{eq:LiminfStep}.
\end{proof}

\begin{remark} \label{rem:Replacement}Note that the bound in Theorem \ref{thm:CurrentfixedT} involving $L_{\textup{low}}$ continues to hold when we replace $n_t$ by some $n$ with $n_{t} \geq  n > c^{\prime} \log t$.
\end{remark}

\section{Current theorems for the TASEP on regular trees}  \label{sec:RegularTreeTASEP} 

In this section, we let the underlying tree be a $d$-regular tree, i.e.\ we assume that the offspring distribution is the Dirac measure on $d-1$ for some $d\geq 3$. Our goal is to show how the results of  Theorems~\ref{thm:LinearCurrentL} and \ref{thm:CurrentfixedT} can be refined when knowing the structure of the tree and the rates. This is illustrated in Section~\ref{sec:RegularTASEPpoly} for polynomially decaying rates, and in Section~\ref{sec:RegularTASEPexpo} for exponentially decaying rates, including the homogeneous rates  from \eqref{eq:RunningExample}.

\subsection{The regular tree with polynomially decaying rates}\label{sec:RegularTASEPpoly}

Consider the $d$-regular tree and homogeneous polynomial rates, i.e.\ we assume that we can find some  $p> 0$ such that the rates satisfy 
\begin{equation}\label{def:Polyweights}
 \frac{1}{j^p} = r_{j}^{\min} =  r_{j}^{\max}
\end{equation} for all $j \in \N$. For this choice of the rates, we want to show how the bounds in Theorem~\ref{thm:LinearCurrentL} on a time window can be improved. In the following, we will write
$a_n \sim b_n$ if $\lim_{n \to \infty} a_n (b_n)^{-1} = 1$.
Note that $\sD_n$ and $\sM_n$ from \eqref{def:FurthestGeneration} and \eqref{def:DecouplingPoint2} satisfy 
\begin{equation*}
\sD_n \sim \left(n^{2 + c_oc_{\textup{low}}}\log^{3} n \right)^{\frac{1}{p}} \quad \text{ and } \quad  \sM_n \sim \frac{d-1+\delta}{d-2}\min(\sD_n,n)
\end{equation*} 
for all $p>0$ and $\delta>0$; see Example \ref{ex:intro}. 
Recall that we are free in the choice of the sequence of generations $(\ell_n)_{n \in \N}$ with $\ell_n \geq \sM_n$ for all $n \in \N$ along which we observe the current created by the first $n$ particles  entering the tree. We assume that $(\ell_n)_{n \in \N}$ satisfies
\begin{equation}\label{eq:abBoundsMn}
\lim_{n \rightarrow \infty} \frac{\sM_n}{\ell_n^p}=a \, , \qquad \lim_{n\to \infty}\frac{n\sM_n^p}{\ell_n^{p+1}} = b 
\end{equation}
for some $a\in [0,1)$ and $b \in [0, \infty)$. 
We apply now Theorem \ref{thm:LinearCurrentL} in this setup.
\begin{proposition}\label{pro:RegularSpecial1} Consider the TASEP on the $d$-regular tree with polynomial weights from \eqref{def:Polyweights} for some $p>0$, and $a$ and $b$ as in \eqref{eq:abBoundsMn} for some sequence of generations $(\ell_n)_{n \in \N}$. Let $t_{\textup{up}}, t_{\textup{low}}$ be taken from  \eqref{def:LowerBoundTime1}, \eqref{def:LowerBoundTime2} and \eqref{def:UpperBoundTime}. For $a \in [0,1)$ and $b=0$,
\begin{equation}
\lim_{n \rightarrow \infty} \frac{t_{\textup{up}}}{t_{\textup{low}}} = \lim_{n \rightarrow \infty} t_{\textup{up}}\frac{(d-1)(1+p)}{(1-a)\ell_n^{p+1}} =1\, .
\end{equation} For $a \in [0,1)$ and $b \in (0,\infty)$, 
\begin{equation}
c \leq \liminf_{n \rightarrow \infty} \frac{t_{\textup{up}}}{t_{\textup{low}}} \leq  \limsup_{n \rightarrow \infty} \frac{t_{\textup{up}}}{t_{\textup{low}}} \leq c^{\prime}
\end{equation} holds for some constants $c,c^{\prime}>0$.
\end{proposition}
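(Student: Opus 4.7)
My plan is to substitute $r_{x,y} = |x|^{-p}$ directly into the expressions in \eqref{def:LowerBoundTime1}, \eqref{def:LowerBoundTime2} and \eqref{def:UpperBoundTime}, reduce everything to explicit polynomial sums, and then read off the leading-order behaviour from the hypotheses \eqref{eq:abBoundsMn}. Since $r_x = (d-1)|x|^{-p}$ for every vertex $x$ of the $d$-regular tree, we have $\min_{x \in \cZ_i} r_x = \max_{x \in \cZ_i} r_x = (d-1)i^{-p}$. Riemann-sum asymptotics then give
\[
R_{0,\ell_n}^{\max} \sim \frac{\ell_n^{p+1}}{(d-1)(p+1)}, \qquad R_{\sM_n,\ell_n}^{\min} \sim \frac{\ell_n^{p+1}-\sM_n^{p+1}}{(d-1)(p+1)},
\]
together with $\rho_{\ell_n} = (d-1)\ell_n^{-p}$ and $\min_{|x|\leq \sM_n} r_x = (d-1)\sM_n^{-p}$; Stirling's formula yields $\tfrac{1}{\ell_n+1}\sum_{i=0}^{\ell_n}\log R_i^{\max} = p\log\ell_n - p - \log(d-1) + o(1)$.

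I would then identify the dominant candidate for $t_{\textup{low}}$. Since $R_{0,\ell_n}^{\max}\rho_{\ell_n}$ is of order $\ell_n$, the correction bracket in $t_1^{\textup{low}}$ equals $1-o(1)$, while $t_2^{\textup{low}}\sim \ell_n^{p+1}/(2(d-1)e^p)$. The elementary inequality $2e^p > p+1$ for $p>0$ implies $t_1^{\textup{low}} > t_2^{\textup{low}}$ eventually, which pins down the exact leading constant of $t_{\textup{low}}$.

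For the upper bound, I compute $\theta$ from \eqref{def:thetaRoot}: the explicit formulas give
\[
(\min_{\sM_n < |x|\leq \ell_n} r_x)\, R_{\sM_n,\ell_n}^{\min} \sim \frac{\ell_n\bigl(1-(\sM_n/\ell_n)^{p+1}\bigr)}{p+1},
\]
and the constraint $\sM_n \leq \ell_n$ combined with $\sM_n/\ell_n^p \to a < 1$ forces this quantity to diverge, so $\theta = \infty$. Hence \eqref{def:UpperBoundTime} reduces to $t_{\textup{up}} = \tfrac{5(n+\sM_n)\sM_n^p}{d-1} + (1+\theta_n)R_{\sM_n,\ell_n}^{\min}$ with $\theta_n \to 0$. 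The first summand splits as $5n\sM_n^p/(d-1) + 5\sM_n^{p+1}/(d-1)$, whose first piece is controlled by the hypothesis $n\sM_n^p/\ell_n^{p+1} \to b$ and whose second piece is negligible compared to $R_{\sM_n,\ell_n}^{\min}$ thanks to $\sM_n \leq \ell_n$ together with $\sM_n/\ell_n^p \to a$.

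With these ingredients the two cases follow immediately. When $b=0$, only the $R_{\sM_n,\ell_n}^{\min}$ term contributes to leading order in $t_{\textup{up}}$ and one matches it against $t_{\textup{low}}$ to obtain both convergences to $1$ with the explicit constant $(1-a)/((d-1)(p+1))$. When $b \in (0,\infty)$, the $5n\sM_n^p/(d-1)$ piece contributes a non-vanishing extra multiple of $\ell_n^{p+1}$, so $t_{\textup{up}}/t_{\textup{low}}$ is pinched between two positive constants depending on $a$, $b$, $d$, $p$. The main obstacle is purely bookkeeping: tracking the ratio $\sM_n^{p+1}/\ell_n^{p+1}$ across the regimes $p<1$, $p=1$, $p>1$, where the single hypothesis $\sM_n/\ell_n^p \to a$ together with $\sM_n \leq \ell_n$ has qualitatively different consequences and rules out certain regimes outright. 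Once this case analysis is sorted, the rest is direct substitution into Theorem~\ref{thm:LinearCurrentL}.
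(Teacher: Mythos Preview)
Your approach is essentially the same as the paper's: direct substitution of the polynomial rates into the definitions of $t_{\textup{low}}$ and $t_{\textup{up}}$, Riemann-sum asymptotics for $R^{\min}_{\sM_n,\ell_n}$ and $R^{\max}_{0,\ell_n}$, verification that $\theta=\infty$, and comparison of leading-order terms under the hypotheses on $a$ and $b$. You are in fact slightly more careful than the paper in two places---you explicitly verify $t_1^{\textup{low}}>t_2^{\textup{low}}$ (the paper only writes $t_{\textup{low}}\geq t_1^{\textup{low}}$ before asserting $\sim$) and you flag the case analysis for $(\sM_n/\ell_n)^{p+1}$ across $p\lessgtr 1$, which the paper glosses over---but the overall strategy and computations match.
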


\begin{proof} Recall the notation from Section \ref{sec:ModelResults}. For $b\in (0,\infty)$, we observe that  for the above choice of transitions rates
\[
(\min_{\sM_n < \abs{x} \leq \ell_n} r_x) R^{\min}_{\sM_n,\ell_n} = r_{\ell_n} \sum_{k = \sM_n}^{\ell_n} \frac{1}{r_k} = \frac{1}{\ell_n^p} \sum_{k = \sM_n}^{\ell_n} k^{p} \sim \frac{1}{\ell_n^p} \int_{\sM_n}^{\ell_n} x^p \dif x \sim \frac{1-a}{1+p} \ell_n  
\]
holds, and hence $\theta=\infty$ in \eqref{def:thetaRoot}. Thus, we see that
\begin{align}\label{eq:Tupesti}
t_{\textup{up}}\sim 5(n(\sM_n)^p + (\sM_n)^{p+1}) +  \frac{1-a}{(d-1)(1+p)} \ell_n^{p+1} \, .
\end{align}
A similar computation for $b=0$ shows that $t_{\textup{up}}\sim (1-a)((d-1)(1+p))^{-1} \ell_n^{p+1}$ holds.
For the lower bound $t_{\textup{low}}$, we use that $t_{\textup{low}} \geq t_1^{\textup{low}}$ with $ t_1^{\textup{low}}$ in \eqref{def:LowerBoundTime1} to see that 
\begin{equation}\label{eq:Tlowesti}
t_{\textup{low}} \sim R^{\min}_{0,\ell_n} \sim \frac{1-a}{(d-1)(1+p)} \ell_n^{p+1}
\end{equation} holds. Therefore, combining \eqref{eq:Tupesti} and \eqref{eq:Tlowesti}, we obtain a sharp time window where we see a current of order $n$ when $b = 0$. We obtain the correct leading order for the time window to observe a current linear in $n$ in the case of  $0< b < \infty$.
\end{proof}

We conclude this section by discussing some examples of the sequence $(\ell_n)_{n \in \N}$ for the $d$-regular tree with $d \geq 3$, and rates which satisfy \eqref{def:Polyweights}, with the tree-TASEP starting from an all empty initial condition.

In the following examples, we take $\delta \to 0$ when estimating $\sM_n$ in Example \ref{ex:intro}. Moreover, because the rates decay polynomially, $c_{\textup{low}}$ can be taken to be arbitrarily close to $0$.    

\begin{example}\label{pro:PolyLinearCurrentFixedSPECIAL}  Fix some $c > \max(2, \frac{3}{1+p})$. Then for every $\delta^{\prime}>0$, we have that $\P$-almost surely \begin{equation*}
\lim_{n\to \infty} \cJ_{n^{c}}\Big(\frac{1-\delta^{\prime}}{(d-1)(1+p)} n^{c(1+p)} \Big) =0\, , \qquad  \liminf_{n\to \infty}\frac{1}{n}  \cJ_{n^{c}} \Big(  \frac{1+\delta^{\prime}}{(d-1)(1+p)} n^{c(1+p)}  \Big) >0.
\end{equation*} 
This is because the condition on $c$ guarantees $b = 0$. 
\end{example} 

\begin{example}
Let $p = 2(2+c_oc_{\textup{low}})$, and note that 
\begin{equation}
\sD_n \sim n^\frac{1}{2}\log^{3/p} n  \quad \text{ and } \quad  \sM_n \sim \frac{d-1}{d-2} n^\frac{1}{2}\log^{3/p} n
\end{equation} holds. Choosing $\ell_n= n^{(2+p)/(2+2p)}\log^{3/(1+p)} n$ for all $n\in \N$ yields that $a=0$ and $b\in (0,\infty)$ in \eqref{eq:abBoundsMn}. Hence, we can choose $t_{\textup{up}}$ and $t_{\textup{low}}$ in Proposition \ref{pro:RegularSpecial1} to be both of order $n^{1+p/2}\log^3n$.
\end{example}
\begin{example}
Let $p=1$, and note that 
\begin{equation}\label{eq:Example2}
\sD_n \sim n  \quad \text{ and } \quad  \sM_n \sim \frac{d-1}{d-2} n
\end{equation} holds. Choosing $\ell_n= n$ for all $n\in \N$ yields that $a\in (0,1)$ and $b \in (0,\infty)$ in \eqref{eq:abBoundsMn}. Hence, we can choose $t_{\textup{up}}$ and $t_{\textup{low}}$ in Proposition \ref{pro:RegularSpecial1} to be both of order $n^2$. 
\end{example}
\begin{example}
Let $p=\frac{1}{2}$ and $d\geq 4$. Then $\sD_n$ and $\sM_n$ satisfy \eqref{eq:Example2}. Choosing $\ell_n= n^2$ for all $n\in \N$ yields that $a=1/(d-2)$ and $b=0$ in \eqref{eq:abBoundsMn}. Hence, we can choose \begin{equation}
t_{\textup{up}} \sim t_{\textup{low}} \sim \frac{2(d-3)}{3(d-1)(d-2)}n^{3/2}
\end{equation} in Proposition \ref{pro:RegularSpecial1}. 
\end{example}
\begin{example}\label{last}
Let $p=\frac{3}{4}$ and $d\geq 3$. Then $\sD_n$ and $\sM_n$ satisfy \eqref{eq:Example2}. Choosing $\ell_n= n^2$ for all $n\in \N$ yields that $a=0$ and $b=0$ in \eqref{eq:abBoundsMn}. Hence, we can choose \begin{equation}
t_{\textup{up}} \sim t_{\textup{low}} \sim \frac{4}{7(d-1)}n^{7/2}
\end{equation} in Proposition \ref{pro:RegularSpecial1}. 
\end{example}

\subsection{The regular tree with exponentially decaying rates}\label{sec:RegularTASEPexpo}

We now study the $d$-regular tree with exponentially decaying rates, i.e.\ the rates satisfy
\begin{equation*}
\kappa \eup^{-c_{\textup{up}}\ell} = r_{\ell}^{\min} =  r_{\ell}^{\max}
\end{equation*} for all $\ell \in \N$ and some constants $\kappa,c_{\textup{up}}>0$.  In this setup, our goal is to improve the bounds on the window of generations in Theorem \ref{thm:CurrentfixedT}. Let  $(N_t)_{t\geq 0}$ be some integer sequence and assume that
\begin{equation}\label{eq:cexpconstruction}
\lim_{t \rightarrow \infty}\frac{\log N_t}{\log t} = c_{\exp}
\end{equation} holds for some $c_{\exp} \in [0,1)$. 
\begin{proposition}\label{pro:ExponentialRegularExample} Consider the TASEP on the $d$-regular tree with exponentially decaying rates, and fix some $\delta\in (0,1)$. We set
\begin{equation}\label{def:LupGen}
\tilde{L}_{\textup{up}}:= \left\lceil\frac{1}{c_{\textup{up}}}\log t\left( 1+ \log^{-\frac{1}{3}} t\right) \right\rceil \quad \text{ and } \quad \tilde{L}_{\textup{low}}:= \frac{1-\delta}{c_{\textup{up}}}\log t \, .
\end{equation} Then there exists some $C=C(\delta,c_{\textup{up}})>0$ such that if $c_{\exp}\leq C$, then
\begin{equation}\label{eq:StatementexpregularTree}
\lim_{t \rightarrow \infty}  J_{\tilde{L}_{\textup{up}}}(t) =0 \quad \text{ and } \quad \lim_{t \rightarrow \infty} \frac{1}{N_{t}} J_{\tilde{L}_{\textup{low}}}(t) = \infty \,  .
\end{equation}  
In particular, for $c_{\exp}=0$, we can choose $\tilde{L}_{\textup{up}}$ and $\tilde{L}_{\textup{low}}$ such that $\tilde{L}_{\textup{up}} \sim \tilde{L}_{\textup{low}}$ holds.
\end{proposition}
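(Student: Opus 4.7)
I plan to deduce the proposition from Theorem~\ref{thm:CurrentfixedT} together with the couplings of Section~\ref{sec:Couplings}, specialised to the rates $r_\ell = \kappa\eup^{-c_{\textup{up}}\ell}$ on the $d$-regular tree. In this setting, Example~\ref{ex:intro} yields $\sM_n \sim c_d\log n$ for an explicit constant $c_d = c_d(d,c_{\textup{up}})$, and a direct computation gives $n_t \asymp t^{1/(1+c_{\textup{up}}c_d)}$, $\sM_{n_t} \sim \frac{c_d}{1+c_{\textup{up}}c_d}\log t$, $L_1^{\textup{up}} \sim \tfrac{2}{c_{\textup{up}}}\log t$, $L_2^{\textup{up}} \sim \tfrac{1}{c_{\textup{up}}}\log t$, so $L_{\textup{up}} = \min(L_1^{\textup{up}},L_2^{\textup{up}}) \sim \tfrac{1}{c_{\textup{up}}}\log t$.

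For the upper bound I would note that $\tilde{L}_{\textup{up}} = \lceil\frac{1}{c_{\textup{up}}}\log t(1+\log^{-1/3}t)\rceil$ exceeds $L_{\textup{up}}$ eventually, so by monotonicity of the current in the level, $\cJ_{\tilde{L}_{\textup{up}}}(t) \le \cJ_{L_{\textup{up}}}(t) \to 0$ almost surely by Theorem~\ref{thm:CurrentfixedT}.

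For the lower bound, Theorem~\ref{thm:CurrentfixedT} gives $\cJ_{\sM_{n_t}}(5t) \ge n_t$ almost surely, and whenever $\delta \ge 1/(1+c_{\textup{up}}c_d)$ the inequality $\tilde{L}_{\textup{low}} \le \sM_{n_t}$ holds eventually, so monotonicity plus a trivial time-rescaling (apply the theorem at time $t/5$) yields $\cJ_{\tilde{L}_{\textup{low}}}(t) \ge n_{t/5}\asymp n_t$; choosing $C(\delta,c_{\textup{up}}) < 1/(1+c_{\textup{up}}c_d)$ then ensures $n_t/N_t\to\infty$ whenever $c_{\exp}\le C$. For smaller $\delta$, and in particular for the sharp regime $c_{\exp}=0$ with $\tilde{L}_{\textup{low}}$ asymptotic to $\frac{1}{c_{\textup{up}}}\log t$, I would sharpen the decoupling specifically on the regular tree by a pairwise argument: since a particle jumping out of an empty internal vertex picks a uniform child among $d-1$, two particles agree at level $\ell$ with probability at most $(d-1)^{-\ell}$, and a union bound gives pairwise disentanglement of $N_tg(t)$ particles (with $g\to\infty$ slowly) by some level of order $O(\log N_t) = o(\log t)$. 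After that level each of these particles performs an independent biased random walk on its own subtree, hitting $\tilde{L}_{\textup{low}} = \frac{1}{c_{\textup{up}}}\log t - f(t)$ in time of order $t\eup^{-c_{\textup{up}}f(t)} \ll t$ for any $f\to\infty$, giving $\cJ_{\tilde{L}_{\textup{low}}}(t) \ge N_tg(t) \gg N_t$ almost surely.

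The main obstacle lies in this last refinement: the generic disentanglement bound $\sM_n$ from Theorem~\ref{thm:DisentanglementGWT} is not tight on the $d$-regular tree with exponential rates, and the pairwise $(d-1)^{-\ell}$ bound above, together with independence of biased random walks on disjoint subtrees and a concentration bound for the hitting times that dominates on the top level only, is what produces the sharp window $\tilde{L}_{\textup{up}}\sim\tilde{L}_{\textup{low}}\sim\frac{1}{c_{\textup{up}}}\log t$ of the ``in particular'' statement.
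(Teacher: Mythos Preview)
Your upper bound is fine and in fact cleaner than the paper's: since $L_2^{\textup{up}} = \tfrac{1}{c_{\textup{up}}}\log t + O(1)$ for these rates, $\tilde L_{\textup{up}}\ge L_{\textup{up}}$ eventually and monotonicity plus Theorem~\ref{thm:CurrentfixedT} gives the claim directly. The paper instead repeats the Chebyshev computation from scratch at the level $\tilde L_{\textup{up}}$, which is unnecessary.

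For the lower bound your argument has a real gap. The case split on $\delta$ and the ``sharper pairwise disentanglement'' for small $\delta$ are both avoidable, and the latter is not correct as written. The claim that two particles coincide at level $\ell$ with probability at most $(d-1)^{-\ell}$ assumes that at every step the trailing particle sees all $d-1$ children empty; Proposition~\ref{lem:Early-sep} only guarantees at least two empty children at each of $F_n(m)$ opportunities, which with $\varepsilon=1$ gives the bound $2^{-F_n(m)}$, not $(d-1)^{-\ell}$. So this refinement does not go through without substantial extra work.

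The paper avoids all of this via Remark~\ref{rem:Replacement}: instead of tracking $n_t$ particles, track only $n=\lceil t^C\rceil$ particles for a small $C=C(\delta,c_{\textup{up}})$. Since $\sM_{t^C}\sim c_d C\log t$, choosing $C\le \tfrac{1-\delta}{c_d c_{\textup{up}}}$ forces $\sM_{t^C}\le \tilde L_{\textup{low}}$, and then Theorem~\ref{thm:CurrentfixedT} (in the form of Remark~\ref{rem:Replacement}) together with monotonicity in the level gives $\cJ_{\tilde L_{\textup{low}}}(5t)\ge t^C$ eventually. If $c_{\exp}\le C$ this already yields $\cJ_{\tilde L_{\textup{low}}}(t)/N_t\to\infty$. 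This handles every $\delta\in(0,1)$ uniformly, with no case distinction and no new disentanglement estimate.

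Finally, the ``in particular'' clause does not require the sharp window you sketch: once the main statement is proved for every $\delta>0$, the case $c_{\exp}=0$ allows $\delta$ to be taken arbitrarily small, so $\tilde L_{\textup{low}}/\tilde L_{\textup{up}}\to 1-\delta$ can be made as close to $1$ as desired.
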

\begin{proof} We start with the lower bound $\tilde{L}_{\textup{low}}$. Observe that by Theorem \ref{thm:DisentanglementGWT}, there exists some $C=C(\delta,c_{\textup{up}})\in (0,1)$ such that the first $\lceil t^C\rceil$ particles are $\P$-almost surely disentangled at generation $L_{\textup{low}}$ for all $t$ sufficiently large. Since $J_{m}(t)$ is decreasing in the generation $m$, and $\sM_n$ is increasing in the number of particles $n$, we apply Theorem \ref{thm:CurrentfixedT} and Remark \ref{rem:Replacement} to conclude the second statement in \eqref{eq:StatementexpregularTree}. \\

For the first statement, we follow the proof of Theorem \ref{thm:CurrentfixedT}. It suffices to show that
\begin{equation}\label{eq:TheLastNumberedEquation}
\lim_{t \rightarrow \infty} 2\lambda t P_{\bT}\left( \sum_{i=0}^{\tilde{L}_{\textup{up}}} \frac{\om_i}{r_i^{\max}} < t \right) = 0
\end{equation}  holds, where $(\om_i)$ are independent Exponential-$1$-distributed random variables. Using Chebyshev's inequality, we obtain that
\begin{equation*}
P_{\bT}\left( \sum_{i=0}^{\tilde{L}_{\textup{up}}} \frac{\om_i}{r_i^{\max}} < t \right)   \leq  \exp\left( \tilde{L}_{\textup{up}} -\sum_{i=0}^{\tilde{L}_{\textup{up}}} \log\Big( 1+ \frac{\tilde{L}_{\textup{up}}}{t}\kappa \exp({c_{\textup{up}}i}) \Big) \right) \, .
\end{equation*} Since $\tilde{L}_{\textup{up}}t^{-1} \exp({c_{\textup{up}}i}) \geq 0$ holds for all $i\in \N$, we see that
\begin{equation*}
t P_{\bT}\left( \sum_{i=0}^{\tilde{L}_{\textup{up}}} \frac{\om_i}{r_i^{\max}} < t \right) \leq  \exp\left(\log t+ \tilde{L}_{\textup{up}} - \sum_{i=\lfloor\tilde{L}_{\textup{up}}-\sqrt{\tilde{L}_{\textup{up}}}\rfloor}^{\tilde{L}_{\textup{up}}} \big(c_{\textup{up}}i + \log(\kappa\tilde{L}_{\textup{up}})-\log t \big) \right)  \, .
\end{equation*} Plugging in the definition of $\tilde{L}_{\textup{up}}$ from \eqref{def:LupGen}, a computation shows that the right hand side  converges to $0$ when $t \rightarrow \infty$. This yields \eqref{eq:TheLastNumberedEquation}.
\end{proof}

We conclude this paragraph by revisiting the $d$-regular tree with homogeneous rates from \eqref{eq:RunningExample} in Section \ref{sec:ModelResults}.

\begin{proof}[Proof of Example \ref{pro:RunningExample}] Note that the first bound involving $L_{\textup{up}}$ follows immediately from Proposition \ref{pro:ExponentialRegularExample}. For the second bound involving $L_{\textup{low}}$,  note that we have $c_{\exp}=0$ for $N_t=t^{\alpha}$ to conclude.
\end{proof}

\section{Invariant distributions and blockage} \label{sec:LargeTimes}
In this section, our goal is to show Theorem \ref{thm:ConvergenceFlow}. The different parts of Theorem \ref{thm:ConvergenceFlow} will be shown in Propositions \ref{pro:ConvergenceFlow}, \ref{pro:Superflow}, \ref{pro:SuperflowVanishing} and \ref{pro:BlockageSubflow}, respectively. Let $\bT=(V,E,o) \in \mathcal{T}$ be a locally finite, rooted tree on which the TASEP is a Feller process with respect to a given family of rates $(r_{x,y})$. For a pair of probability measures $\pi, \tilde{\pi}$ on $\{0,1\}^{V}$, we say that $\tilde{\pi}$ \textbf{is stochastically dominated by} $\pi$   (and write $\tilde{\pi}\preceq \pi$), if 
 \begin{equation}
 \int f \dif \tilde{\pi} \leq  \int f \dif \pi
 \end{equation} holds for all functions $f$ which are increasing. Moreover, recall that for $\rho \in [0, 1]$, $\nu_{\rho}$ is the Bernoulli-$\rho$-product measure on $\{0, 1\}^V$ and that we consider the TASEP on $\bT$ with initial distribution $\nu_0$.
\begin{proposition}\label{pro:ConvergenceFlow} Let $(S_t)_{t \geq 0}$ be the semi-group of the TASEP $(\eta_t)_{t \geq 0}$
where particles are generated at  the root at rate $\lambda$ for some 
$\lambda > 0$. There exists a probability measure $\pi_\lambda$ on $\{ 0,1 \}^{V}$ such that
\begin{equation}\label{eq:ConvergenceFromAllEmpty}
\lim_{t \rightarrow \infty}\nu_{0}S_t = \pi_\lambda\, .
\end{equation}
In particular, $\pi_\lambda$ is a stationary measure for $(\eta_t)_{t \geq 0}$.
\end{proposition}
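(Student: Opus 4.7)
The plan is to exploit two ingredients: the \emph{attractiveness} of the TASEP on $\bT$ under the canonical coupling (Lemma \ref{lem:CanonicalCoupling}), and the fact that the empty configuration is the minimum of the partial order $\preceq$ on $\{0,1\}^V$. These two together force the orbit $\nu_0 S_t$ to be stochastically increasing in $t$, and the limit then has to exist by a soft compactness/monotonicity argument.

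First I would establish monotonicity of $t \mapsto \nu_0 S_t$. Trivially $\nu_0 \preceq \nu_0 S_s$ for every $s\geq 0$, since $\nu_0$ sits on the minimum of $\{0,1\}^V$. Lemma \ref{lem:CanonicalCoupling} (applied coordinate-wise in the canonical coupling with equal reservoir rates $\lambda_1=\lambda_2=\lambda$, together with Remark \ref{rem:AdditionalSources} if needed to accommodate the root reservoir) shows that the semigroup preserves $\preceq$: whenever $\mu\preceq\mu'$ we have $\mu S_t \preceq \mu' S_t$. Applying this with $\mu=\nu_0$ and $\mu'=\nu_0 S_s$ gives
\begin{equation*}
\nu_0 S_t \preceq \nu_0 S_{t+s} \qquad \text{for all } s,t\geq 0.
\end{equation*}

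Next I would extract a limit. Fix any finite $A\subset V$ and any increasing cylinder function $f:\{0,1\}^V\to [0,1]$ depending only on coordinates in $A$. By the previous step, $t\mapsto \int f\,\dif(\nu_0 S_t)$ is non-decreasing in $t$ and bounded by $\|f\|_\infty$, so it converges. Since the stochastic order on the finite space $\{0,1\}^A$ is generated by such indicator functions of increasing cylinder events, the marginals of $\nu_0 S_t$ on every finite $A$ converge, and the limiting marginals are automatically consistent. By Kolmogorov extension (equivalently, by tightness on the compact space $\{0,1\}^V$ with product topology combined with uniqueness of subsequential limits on the algebra of cylinder events) there exists a unique probability measure $\pi_\lambda$ on $\{0,1\}^V$ with $\nu_0 S_t \to \pi_\lambda$ weakly as $t\to\infty$.

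Finally, stationarity follows from the Feller property. For any continuous $f:\{0,1\}^V\to\R$, the function $S_t f$ is continuous, so
\begin{equation*}
\int f\,\dif(\pi_\lambda S_t) = \int S_t f\,\dif \pi_\lambda = \lim_{s\to\infty}\int S_t f\,\dif(\nu_0 S_s) = \lim_{s\to\infty}\int f\,\dif(\nu_0 S_{s+t}) = \int f\,\dif \pi_\lambda,
\end{equation*}
hence $\pi_\lambda S_t = \pi_\lambda$ for every $t\geq 0$. The only potentially delicate point in this scheme is upgrading coordinate-wise monotone convergence of the finite-dimensional marginals to weak convergence on the infinite product $\{0,1\}^V$; this is where compactness of the state space under product topology (and hence of the space of probability measures on it) is indispensable, but otherwise the argument is entirely abstract and does not require any information about the rates $(r_{x,y})$ or the geometry of the tree beyond local finiteness and the Feller property.
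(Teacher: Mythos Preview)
Your argument is correct. It is the standard ``attractive system started from the minimal state'' argument: monotonicity of the semigroup plus minimality of $\nu_0$ force $t\mapsto \nu_0 S_t$ to be stochastically increasing, and the limit exists and is invariant by compactness and Feller.

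The paper takes a genuinely different route. Rather than working directly with the time orbit of $\nu_0$, it introduces the finite truncations $\bT_n$ (with particles exiting at level $n$), proves via a generator comparison in the style of \cite{L:ErgodicI} that their stationary measures satisfy $\pi_\lambda^n \preceq \pi_\lambda^{n+1}$, defines $\pi_\lambda := \lim_n \pi_\lambda^n$, shows this limit is invariant for the infinite-volume dynamics, and only then sandwiches $\nu_0 S_t$ between $\nu_0 S_t^n$ and $\pi_\lambda$ to conclude convergence. Your approach is more elementary and self-contained --- it uses only Lemma \ref{lem:CanonicalCoupling} and nothing about finite-volume approximations or the results of \cite{L:ErgodicI}. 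The paper's approach, on the other hand, yields the additional structural identification $\pi_\lambda = \lim_n \pi_\lambda^n$ as an increasing limit of finite-system equilibria, which hooks directly into Liggett's framework and could in principle be exploited for further comparisons (even if the present paper does not use it again).
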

 In order to show Proposition \ref{pro:ConvergenceFlow}, we adopt a sequence of results from Liggett~\cite{L:ErgodicI}. Let $\bT_n$ denote the tree restricted to level $n$, where particles exit from the tree at $x\in \mathcal{Z}_n$ at rate $r_x$. 
For every $n$, let $\pi_\lambda^n$ denote the invariant distribution of the dynamics $(\eta_t^{n})_{t \geq 0}$ on $\bT_n$ with semi-group $(S^n_t)_{t \geq 0}$. We extend each measure $\pi_\lambda^n$ to a probability measure on $\{0,1\}^{V(\bT)}$ by taking the Dirac measure on $0$ for all sites $x \in V(\bT) \setminus V(\bT_n)$. 
\begin{lemma}[c.f.\  Proposition 3.7 in \cite{L:ErgodicI}] \label{lem:Couplinggeneralrates} For any initial distribution $\tilde{\pi}$,  the laws of the TASEPs $(\eta_t^{n})_{t \geq 0}$ and $(\eta_t^{n+1})_{t \geq 0}$ on $\bT_n$ and $\bT_{n+1}$, respectively, satisfy
\begin{equation}
\tilde{\pi}S_t^n = \P(\eta^n_t \in  \cdot) \preceq \P(\eta^{n+1}_t \in  \cdot) = \tilde{\pi}S_t^{n+1}
\end{equation} for all $t \geq 0$. In particular, $\pi_\lambda^{n} \preceq\pi_\lambda^{n+1}$ holds for all $n \in \N$.
\end{lemma}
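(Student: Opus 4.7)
The plan is to adapt the canonical coupling of Section \ref{sec:CanCoupling} to couple the TASEPs on the finite trees $\bT_n$ and $\bT_{n+1}$ on a common probability space, and then to show by a case analysis of all possible transitions that the coupling preserves the partial order $\preceq$ on $V(\bT_n)$. The second assertion $\pi_\lambda^n \preceq \pi_\lambda^{n+1}$ will then follow from the first via a standard ergodicity argument for the finite-state chain on $\bT_{n+1}$.

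First I would construct the coupling as follows. For every edge $(y,z) \in E(\bT_n)$, use a single shared Poisson clock of rate $r_{y,z}$ that triggers a jump attempt in both processes simultaneously; at the root, use a single shared rate-$\lambda$ clock for the reservoir. For each $x \in \mathcal{Z}_n$, introduce a single shared Poisson clock of rate $r_x = \sum_{(x,y)\in E(\bT_{n+1})} r_{x,y}$, and when it rings select a child $y$ of $x$ in $\bT_{n+1}$ with probability $r_{x,y}/r_x$; in $\bT_n$ this ring causes the particle at $x$ (if present) to exit, while in $\bT_{n+1}$ it triggers a jump attempt from $x$ to $y$. Finally, use independent clocks for the exits from $\mathcal{Z}_{n+1}$ in $\bT_{n+1}$, as these do not affect sites in $V(\bT_n)$.

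Assuming the two initial configurations agree on $V(\bT_n)$, I would then prove that $\eta^n_t(x) \leq \eta^{n+1}_t(x)$ for all $x \in V(\bT_n)$ and all $t \geq 0$ by induction on the ring times of the coupled clocks. For internal edges $(y,z) \in E(\bT_n)$ and for the reservoir at the root, the standard attractiveness argument for TASEP applies verbatim: in each of the cases determined by the values of $\eta^n$ and $\eta^{n+1}$ at the endpoints, the partial order on $V(\bT_n)$ is either preserved trivially or by using the induction hypothesis. At a boundary site $x \in \mathcal{Z}_n$ with chosen child $y$, the key observation is that whenever the shared clock rings with $\eta^n_t(x)=1$, the particle is unconditionally removed from $\bT_n$, while in $\bT_{n+1}$ it is either moved to $y$ (if vacant) or suppressed; in both subcases $\eta^n_t(x)$ drops to $0$ and $\eta^{n+1}_t(x)$ either also becomes $0$ or remains at $1$, so the order on $V(\bT_n)$ is preserved. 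The case $\eta^n_t(x)=0$ is immediate from the induction hypothesis.

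For the stationary inequality $\pi_\lambda^n \preceq \pi_\lambda^{n+1}$, I would take $\tilde\pi = \pi_\lambda^n$ (extended by the Dirac mass at the empty configuration on $V \setminus V(\bT_n)$). By stationarity $\tilde\pi S_t^n = \pi_\lambda^n$ for every $t\geq 0$, and the coupling result just established gives $\pi_\lambda^n \preceq \tilde\pi S_t^{n+1}$. Since the dynamics on the finite tree $\bT_{n+1}$ form an irreducible Markov chain on the finite state space $\{0,1\}^{V(\bT_{n+1})}$ (the empty configuration is reachable from every state via the exit clocks, and every configuration is reachable from the empty one via the reservoir and internal jumps), the unique stationary measure $\pi_\lambda^{n+1}$ is the weak limit of $\tilde\pi S_t^{n+1}$ as $t \to \infty$. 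Stochastic domination is preserved under weak limits, which gives the desired conclusion. The main obstacle will be the careful bookkeeping in the boundary case at $\mathcal{Z}_n$, where the single outgoing rate in $\bT_n$ must be matched with the fan of children in $\bT_{n+1}$ so that every joint transition preserves $\preceq$; once this is done, both the coupling statement and the ergodicity step are routine.
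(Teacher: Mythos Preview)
Your argument is correct and genuinely different from the paper's. The paper proceeds via a generator comparison in the style of Liggett: one checks directly that
\[
\mathcal{L}_{n+1}f(\eta) - \mathcal{L}_n f(\eta) = \sum_{x\in\mathcal{Z}_n,\, y\in\mathcal{Z}_{n+1}} [f(\eta^x)-f(\eta)]\, r_{x,y}\,(-\eta(x)\eta(y)) \geq 0
\]
for increasing cylinder functions $f$ depending only on $V(\bT_n)$, and then appeals to the abstract extension machinery of \cite{L:ErgodicI} (Theorems 2.3, 2.11, 2.13) to upgrade this to the full stochastic domination and to functions depending on $V(\bT_{n+1})$.

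Your route instead builds an explicit pathwise coupling with shared Poisson clocks and verifies monotonicity transition by transition. This is more elementary and entirely self-contained: you never need to invoke Liggett's comparison theorems, and you obtain an almost-sure ordering rather than just a distributional one. The key observation in both approaches is the same boundary phenomenon at $\mathcal{Z}_n$---in the finite system a particle exits unconditionally, in the larger system it may be blocked---but you encode it through the clock construction, whereas the paper encodes it through the sign of the generator difference. The paper's approach is shorter once the cited machinery is in place; yours is more transparent and would be preferable in a self-contained exposition. Your ergodicity argument for $\pi_\lambda^n \preceq \pi_\lambda^{n+1}$ is also fine and slightly more explicit than the paper's, which simply absorbs this step into the Liggett framework.
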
 
 \begin{proof} We follow the arguments in the proof of Theorem 2.13 in \cite{L:ErgodicI}. We note that for all $n\in \N$, the generators $\mathcal{L}_n$ and $\mathcal{L}_{n+1}$ of the TASEPs on $\bT_n$ and $\bT_{n+1}$ satisfy
 \begin{equation*}
 \mathcal{L}_{n+1}f(\eta) -  \mathcal{L}_{n}f(\eta) = \sum_{x \in \mathcal{Z}_n, y \in \mathcal{Z}_{n+1}} \left[ f(\eta^x)- f(\eta)\right] r_{x,y} \left( -\eta(x)\eta(y)\right) \geq 0
 \end{equation*} for any increasing function $f$ which does only depend on $V(\bT_n)$, for all $\eta \in \{ 0,1\}^{V(\bT)}$. Using the extension arguments from Theorem 2.3 and Theorem 2.11 in \cite{L:ErgodicI}, we obtain that
 \begin{equation}\label{eq:StochasticDominationLaws}
 \int f \dif \left[ \tilde{\pi}S^{n}_t\right] \leq  \int f \dif \left[ \tilde{\pi}S^{n+1}_t\right]
 \end{equation}  for any increasing function $f$ which only depends on $V(\bT_n)$, for all $t\geq 0$. It suffices now to show that \eqref{eq:StochasticDominationLaws} holds for all increasing functions $f$ which only depend on $V(\bT_{n+1})$. This follows verbatim the proof of Theorem 2.13 in \cite{L:ErgodicI} by decomposing $f$ according to its values on $V(\bT_{n+1}) \setminus V(\bT_{n})$.
 \end{proof}Lemma \ref{lem:Couplinggeneralrates} implies that the probability distribution $\pi_{\lambda}$ given by
\begin{equation}\label{def:LimitMeasure}
\pi_\lambda := \lim_{n \rightarrow \infty}\pi_\lambda^{n} 
\end{equation}
exists; see also Theorem 3.10 (a) in \cite{L:ErgodicI}. More precisely, Lemma \ref{lem:Couplinggeneralrates} guarantees  for every increasing cylinder function $f$ that
\begin{equation*}
\lim_{n \rightarrow \infty} \int f \dif \pi_{\lambda}^n = \int f \dif \pi_{\lambda}\, .
\end{equation*} Since the set of increasing functions is a determining class, \eqref{def:LimitMeasure} follows.
Furthermore, since $S^n_tf$ converges uniformly to $S_tf$ for any cylinder function $f$,  $\pi_{\lambda}$ is invariant for $(\eta_t)_{t \geq 0}$; see Proposition 2.2 and Theorem 4.1 in \cite{L:ErgodicI}. We now have all tools to show Proposition \ref{pro:ConvergenceFlow}.
\begin{proof}[Proof of Proposition \ref{pro:ConvergenceFlow}] Since we know that  $\pi_{\lambda}$ is invariant, we apply the  canonical coupling from  Lemma \ref{lem:CanonicalCoupling} to see that for all $t\geq 0$,
\begin{equation*}
 \nu_0S_t \preceq \pi_\lambda\, .
\end{equation*} Moreover, by Lemma \ref{lem:Couplinggeneralrates},  for all $t\geq 0$ and all $n \in \N$ 
\begin{equation*}
 \nu_0 S_t^n \preceq\nu_0 S_t \, .
\end{equation*} To prove Proposition \ref{pro:ConvergenceFlow}, it suffices  to show that
\begin{equation*}
\lim_{t \rightarrow \infty} \int f \dif\left[ \nu_0S_t \right] =  \int f \dif\pi_{\lambda}
\end{equation*} holds for any increasing cylinder function $f$. Combining the above observations
\begin{equation*}
\int f \dif \pi^n_{\lambda} = \liminf_{t \rightarrow \infty}\int f \dif\left[ \nu_0S^n_t \right] 
\leq \liminf_{t \rightarrow \infty} \int f \dif\left[ \nu_0S_t \right] 
\leq \limsup_{t \rightarrow \infty}\int f \dif\left[ \nu_0S_t \right] 
 \leq \int f \dif\pi_{\lambda} 
\end{equation*} holds for every $n \in \N$ and for any increasing cylinder function $f$. We conclude the proof recalling \eqref{def:LimitMeasure}; see also the proof of Lemma 4.3 in \cite{L:ErgodicI}.
\end{proof} 
Next, we show that if the rates satisfy a flow rule then there exists an invariant Bernoulli-$\rho$-product measure for some $\rho \in (0,1)$ for the TASEP on the tree; see Theorem 2.1 in \cite[Chapter VIII]{L:interacting-particle}.
\begin{lemma}\label{lem:InvarianceFlow} Let $\bT$ be a locally finite, rooted tree with rates satisfying a flow rule for a flow of strength $q$. Assume that  particles are generated at the root at rate $\lambda=\rho q $ for some $\rho \in (0,1)$. Then $\nu_{\rho}$ is an invariant measure for the TASEP $(\eta_t)_{t \geq 0}$ on $\bT$.
\end{lemma}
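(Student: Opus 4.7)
The plan is to verify invariance by checking directly that $\int \mathcal L f\, d\nu_\rho = 0$ for every cylinder function $f$. Since linear combinations of the monomials $\eta_A(\eta) := \prod_{z\in A}\eta(z)$, indexed by finite subsets $A\subset V$, span all cylinder functions by inclusion--exclusion, it suffices to verify the balance on each such $\eta_A$. For a given $A$, a swap across $(x,y)\in E$ contributes to $\mathcal L\eta_A$ only when exactly one endpoint lies in $A$, and the identities $(1-\eta(y))\eta(y)=0$ and $\eta(x)^{2}=\eta(x)$ collapse $(1-\eta(y))\eta(x)[\eta_A(\eta^{x,y})-\eta_A(\eta)]$ to
\[
\begin{cases}
-(1-\eta(y))\eta(x)\prod_{z\in A\setminus\{x\}}\eta(z), & x\in A,\ y\notin A,\\
+(1-\eta(y))\eta(x)\prod_{z\in A\setminus\{y\}}\eta(z), & x\notin A,\ y\in A,
\end{cases}
\]
while the reservoir piece reduces to $\lambda(1-\eta(o))\prod_{z\in A\setminus\{o\}}\eta(z)$ exactly when $o\in A$. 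Integrating against the product measure $\nu_\rho$ and pulling out the common factor gives
\[
\int \mathcal L\eta_A\, d\nu_\rho = (1-\rho)\rho^{|A|}\Bigl[\,\sum_{\substack{(x,y)\in E\\ x\notin A,\, y\in A}}\!\!\!r_{x,y} - \!\!\!\sum_{\substack{(x,y)\in E\\ x\in A,\, y\notin A}}\!\!\!r_{x,y}\Bigr] + \mathds 1_{o\in A}\,\lambda(1-\rho)\rho^{|A|-1}.
\]

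The crux is to use the flow rule to evaluate the bracketed signed edge-boundary of $A$. Since every non-root vertex $y$ has a unique parent $\bar y$, the first sum equals $\sum_{y\in A,\, y\neq o,\, \bar y\notin A} r_{\bar y, y}$. For the second sum, using $\sum_{y\text{ child of }x}r_{x,y}=r_x$ and splitting children according to membership in $A$, we rewrite it as $\sum_{x\in A}r_x - \sum_{y\in A,\, y\neq o,\, \bar y\in A} r_{\bar y, y}$. Invoking the flow rule $r_{\bar y, y}=r_y$ for every $y\neq o$ in both remaining boundary sums lets them recombine into $\sum_{y\in A,\, y\neq o} r_y$, which cancels against $\sum_{x\in A}r_x$ except for the root term. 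Hence the bracket equals $-r_o\mathds 1_{o\in A}$, and with $r_o=q$ and $\lambda=\rho q$ we conclude
\[
\int \mathcal L\eta_A\, d\nu_\rho = \mathds 1_{o\in A}(1-\rho)\rho^{|A|-1}(\lambda - \rho q) = 0.
\]

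Finally, because the rates are uniformly bounded, cylinder functions form a core for the Feller generator $\mathcal L$, so the identity $\int \mathcal L f\, d\nu_\rho=0$ extends to the full domain and gives $\nu_\rho S_t=\nu_\rho$ for every $t\geq 0$. The only subtle step is the telescoping of the edge-boundary sums with the correct handling of the root, but this is entirely mechanical once the equality $r_{\bar y, y} = r_y$ supplied by the flow rule is inserted. The rest of the verification is routine bookkeeping.
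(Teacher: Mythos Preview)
Your proof is correct and follows essentially the same route as the paper: reduce by linearity to monomials $\eta_A$, compute $\int\mathcal L\eta_A\,d\nu_\rho$ as the signed edge-boundary expression $(1-\rho)\rho^{|A|}\sum_{x\in A,\,y\notin A}[r_{y,x}-r_{x,y}]$ (plus the reservoir term when $o\in A$), and use the flow rule to see this vanishes. The only difference is that you carry out the telescoping of the boundary sums explicitly, whereas the paper cites the analogous computation in Liggett's book and simply asserts that the flow rule makes the sum collapse.
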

\begin{proof} We have to show that for all cylinder functions $f$,
\begin{equation*}
\int \mathcal{L} f \dif \nu_{\rho} = 0\, .
\end{equation*} 
Due to the linearity of $\mathcal{L}$, it suffices to consider $f$ of the form
\begin{equation}
f(\eta) = \prod_{x \in A} \eta(x) 
\end{equation}
with $\eta \in \{ 0,1\}^{V(\bT)}$ and $A$ some finite subset of $V(\bT)$. A calculation shows that 
if $o \notin A$,
\begin{equation}\label{eq:GenereatorEstimateProduct}
\int \mathcal{L} f \dif \nu_{\rho} = (1-\rho )\rho ^{\abs{A}}\sum_{x \in A, y \notin A}\left[ r_{y,x}-r_{x,y} \right] \, ;
\end{equation}
see also the proof of Theorem 2.1(a) in \cite[Chapter VIII]{L:interacting-particle}. Since a flow rule holds, the sum in \eqref{eq:GenereatorEstimateProduct} is zero. Similarly,  we obtain in the case $o \in A$
\begin{equation*}
\int \mathcal{L} f \dif \nu_{\rho} = (1-\rho )\rho ^{\abs{A}}\left(\sum_{x \in A, y \notin A}\left[ r_{y,x}-r_{x,y} \right] +\frac{\lambda}{\rho}\right) \, .
\end{equation*} 
We conclude using the flow rule, noting $r_o = q = \frac{\lambda}{\rho}$ and recalling the definition of $r_o$.
\end{proof}

\begin{remark}\label{rem:TASEPConstruction} Note that the measure $\nu_{1}$ is always invariant for the TASEP on trees. Theorem 1 of \cite{BL:ExclusionMeasures} shows that the TASEP on $\bT$ with a half-line attached to the root, where all edges point to the root, has an invariant Bernoulli-$\rho$- product measure with  $ \rho \in (0,1)$ if and only if a flow rule holds. If a flow rule holds, a similar argument as Theorem 1.17 in  \cite[Part III]{L:Book2} shows that $\nu_{\rho}$ is extremal invariant for all $\rho \in [0,1]$.
\end{remark}

\begin{figure}
\centering
\begin{tikzpicture}[scale=0.6]

		  \node[shape=circle,scale=1.2,draw,fill=white] (A1) at (0,0){} ;						
		\draw [->,line width=1] (-1.5,0) to [bend right,in=135,out=45] (A1);			
		
	\node[scale=1]  at (-0.85,0.7){$5$};					
	\node[scale=1]  at (0.7,1.15){$3$};							
	\node[scale=1]  at (1.3,-0.45){$2$};

	\node[scale=1]  at (3.3,1.8){$1$};							
	\node[scale=1]  at (3,0.5){$2$};	
	\node[scale=1]  at (3,2.45){$2$};	

	\node[scale=1]  at (3,-0.7){$1$};	
	\node[scale=1]  at (3,-2.3){$1$};	

\def\y{10}	
\def\z{5.5}		
		  \node[shape=circle,scale=1.2,draw,fill=white] (A1) at (\y,0){} ;						
		\draw [->,line width=1] (-1.5+\y,0) to [bend right,in=135,out=45] (A1);			
		
	\node[scale=1]  at (\y-0.85,0.7){$5$};					
	\node[scale=1]  at (\y+0.7,1.15){$3$};							
	\node[scale=1]  at (\y+1.3,-0.45){$2$};

	\node[scale=1]  at (\y+3.3,1.8){$1$};							
	\node[scale=1]  at (\y+3,0.5){$1$};	
	\node[scale=1]  at (\y+3,2.45){$1$};	

	\node[scale=1]  at (\y+3,-0.7){$1$};	
	\node[scale=1]  at (\y+3,-2.3){$1$};

		 \node[shape=circle,scale=1.2,draw,fill=white] (B1) at (\y+4+\z,1.5){} ;						
		\draw [->,line width=1] (\y+\z+2.5,1.5) to [bend right,in=135,out=45] (B1);		

	\node[scale=1]  at (\y+\z+4-0.85,1.5+0.7){$2$};	
	\node[scale=1]  at (\y+5+\z,0.5){$1$};							
	\node[scale=1]  at (\y+5+\z,2.45){$1$};

\foreach\x in{0,10,17.5}{

    \node[shape=circle,scale=1.2,draw,fill=white] (A1) at (\x,0){} ;
	
	\node[shape=circle,scale=1.2,draw,fill=white] (B1) at (2+\x,1.5){} ;
	\node[shape=circle,scale=1.2,draw,fill=white] (B2) at (2+\x,-1.5){} ;

	\node[shape=circle,scale=1.2,draw,fill=white] (C1) at (4+\x,2.5){} ;
	\node[shape=circle,scale=1.2,draw,fill=white] (C2) at (4+\x,1.5){} ;
	\node[shape=circle,scale=1.2,draw,fill=white] (C3) at (4+\x,0.5){} ;

	\node[shape=circle,scale=1.2,draw,fill=white] (C4) at (4+\x,-0.8){} ;
	\node[shape=circle,scale=1.2,draw,fill=white] (C5) at (4+\x,-2.2){} ;

	\draw[thick] (A1) to (B1);
	\draw[thick] (A1) to (B2);

	\draw[thick] (B1) to (C1);
	\draw[thick] (B1) to (C2);
	\draw[thick] (B1) to (C3);

	\draw[thick] (B2) to (C4);
	\draw[thick] (B2) to (C5);

	\draw[thick, dashed] (C1) to (4.8+\x,2.5);	
	\draw[thick, dashed] (C2) to (4.8+\x,1.5);	
	\draw[thick, dashed] (C3) to (4.8+\x,0.5);	
	\draw[thick, dashed] (C4) to (4.8+\x,-0.8);	
	\draw[thick, dashed] (C5) to (4.8+\x,-2.2);	
	
\node[scale=1]  at (\x+0.1,-0.6){$o$};

	\node[shape=circle,scale=0.9,draw,fill=red] (K1) at (A1){} ;
	\node[shape=circle,scale=0.9,draw,fill=red] (K2) at (C1){} ;
	\node[shape=circle,scale=0.9,draw,fill=red] (K3) at (C3){} ;
	\node[shape=circle,scale=0.9,draw,fill=red] (K5) at (B2){};

	}


\node[scale=2]  at (6.75,0){``=''};	
\node[scale=1.7]  at (16.05,0){+};

\end{tikzpicture}
\caption[Visualization of the superflow decomposition used in Lemma \ref{lem:DominationSuperflow}]{\label{fig:Superflow}Visualization of the superflow decomposition used in Lemma \ref{lem:DominationSuperflow}. The superflow given at the left-hand side is decomposed into two flows of strengths $5$ and $2$, respectively, shown at the right-hand side. }
\end{figure}
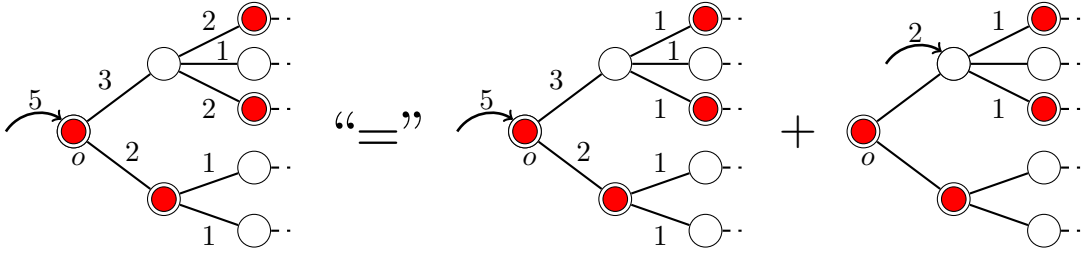
Next, we consider the case where the rates do not necessarily satisfy a flow rule. 
In the following, we will without loss of generality assume that  $\lambda<q(o)$ holds. When $\lambda \geq q(o)$, the canonical coupling in Lemma \ref{lem:Couplinggeneralrates} yields that the current stochastically dominates the current of any TASEP with rate $\lambda^{\prime}$ for some $\lambda^{\prime} < q(o)$. We now characterize the behavior of the TASEP in the superflow case.
\begin{proposition}\label{pro:Superflow} Assume that a superflow rule holds.  Let $(\cJ_o(t))_{t \geq 0}$ be the current at the root for the TASEP on a tree $\bT$ with a reservoir of rate $\lambda=\rho q(o)$ at the root for some $\rho \in (0,1)$, and initial distribution $\nu_0$. Then the current $(\cJ_o(t))_{t \geq 0}$ through the root satisfies
\begin{equation}\label{eq:CurrentEstimateSuperflow}
\lim_{t \rightarrow \infty} \frac{\cJ_o(t)}{t} =  \lambda \pi_{\lambda}(\eta(o)=0) \geq q(o) \rho(1-\rho)
\end{equation} almost surely, where $\pi_{\lambda}$ is given by \eqref{eq:ConvergenceFromAllEmpty}.
\end{proposition}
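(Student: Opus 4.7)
My plan is to split the proof into two essentially independent parts: an identification of the almost-sure limit of $\cJ_o(t)/t$ in terms of $\pi_\lambda(\eta(o)=0)$, and an inequality $\pi_\lambda(\eta(o)=0)\ge 1-\rho$ established via a generator computation against the Bernoulli product measure $\nu_\rho$.

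For the first part I would observe that $(\cJ_o(t))_{t\ge 0}$ is a counting process that increments by one exactly when the reservoir at the root fires while $\eta_s(o)=0$, so it has stochastic intensity $\lambda\mathds{1}\{\eta_s(o)=0\}$. Consequently
$$M_t \;:=\; \cJ_o(t) - \lambda\int_0^t \mathds{1}\{\eta_s(o)=0\}\,\dif s$$
is a square-integrable martingale with predictable bracket bounded by $\lambda t$, and the standard strong law for counting-process martingales gives $t^{-1}M_t\to 0$ almost surely. To upgrade the weak convergence $\nu_0 S_t\to\pi_\lambda$ from Proposition \ref{pro:ConvergenceFlow} into the almost-sure ergodic average $t^{-1}\int_0^t\mathds{1}\{\eta_s(o)=0\}\,\dif s\to \pi_\lambda(\eta(o)=0)$, I would exploit attractivity: coupling the $\nu_0$-started process against the finite-volume stationary copies built from $\pi_\lambda^n$ on the truncated trees $\bT_n$ (for which a classical ergodic theorem for irreducible finite Markov chains applies) produces a monotone sandwich that closes in the limit via the monotone convergence $\pi_\lambda^n\uparrow\pi_\lambda$ already used in the proof of Proposition \ref{pro:ConvergenceFlow}.

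The heart of the argument is the bound $\pi_\lambda(\eta(o)=0)\ge 1-\rho$, for which I would prove the stronger statement $\pi_\lambda\preceq\nu_\rho$. For product-indicator test functions $f(\eta)=\prod_{x\in A}\eta(x)$ with finite $A\subseteq V(\bT)$, the generator computation from the proof of Lemma \ref{lem:InvarianceFlow} gives
$$\int\cL f\,\dif\nu_\rho \;=\; (1-\rho)\rho^{|A|}\sum_{x\in A,\,y\notin A}\bigl[r_{y,x}-r_{x,y}\bigr] \;+\; (1-\rho)\rho^{|A|-1}\lambda\,\mathds{1}\{o\in A\}.$$
Because every edge of $\bT$ points from parent to child, the bracketed sum telescopes over parent-child pairs to $-\sum_{x\in A} q(x)$ in both the $o\in A$ and $o\notin A$ cases; the reservoir contribution exactly cancels $-q(o)$ under $\lambda=\rho r_o=\rho q(o)$, leaving
$$\int\cL f\,\dif\nu_\rho \;=\; -(1-\rho)\rho^{|A|}\sum_{x\in A\setminus\{o\}} q(x)\;\le\;0$$
by the superflow assumption $q(x)\ge 0$. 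Every increasing cylinder function is a positive linear combination of such product-indicator $f$, and the TASEP on $\bT$ is attractive by Lemma \ref{lem:CanonicalCoupling} (so $S_t f$ is increasing whenever $f$ is); the standard Liggett argument applied to $\tfrac{\dif}{\dif t}\int S_t f\,\dif\nu_\rho$ then gives $\nu_\rho S_t\preceq\nu_\rho$ for every $t\ge 0$. Combining with the canonical coupling applied to the initial ordering $\nu_0\preceq\nu_\rho$ yields $\nu_0 S_t\preceq\nu_\rho$ for every $t$, and passing to the limit $t\to\infty$ gives $\pi_\lambda\preceq\nu_\rho$. Testing against the increasing observable $\eta\mapsto\eta(o)$ produces $\pi_\lambda(\eta(o)=1)\le\rho$, equivalently $\pi_\lambda(\eta(o)=0)\ge 1-\rho$, and combining with the first part completes the proof.

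The main obstacle is the almost-sure ergodic averaging in the first part: weak convergence $\nu_0 S_t\to\pi_\lambda$ only yields $L^1$-convergence of the Cesàro average, and $\pi_\lambda$ is not a priori ergodic (indeed $\nu_1$ is always stationary under the dynamics, so the simplex of invariant measures may well be nontrivial). The essential tool is attractivity via the canonical coupling, but care is required to show that the monotone sandwich between copies started from $\nu_0$ and from the finite-volume stationary states $\pi_\lambda^n$ actually closes at the precise value $\pi_\lambda(\eta(o)=0)$ rather than trapping a proper interval of limit points. A secondary technical point is justifying the telescoping identity for the generator sum on an infinite $\bT$, which I would verify first in finite volume on $\bT_n$ (where the generator is a finite sum) and then pass to the full tree via the same $\pi_\lambda^n\uparrow\pi_\lambda$ monotone limit.
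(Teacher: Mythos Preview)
Your generator computation is correct: the telescoping identity $\sum_{x\in A,\,y\notin A}[r_{y,x}-r_{x,y}]=-\sum_{x\in A}q(x)$ holds, so indeed $\int\cL f_A\,\dif\nu_\rho\le 0$ for every product indicator $f_A=\prod_{x\in A}\eta(x)$. The gap is the sentence ``every increasing cylinder function is a positive linear combination of such product-indicator $f$'': this is false. Already on $\{0,1\}^2$ the increasing function $\max(\eta_1,\eta_2)=\eta_1+\eta_2-\eta_1\eta_2$ requires a negative coefficient. The Liggett monotonicity argument needs $\int\cL g\,\dif\nu_\rho\le 0$ for the increasing function $g=S_tf$, which is not a product indicator even when $f$ is, so your computation does not close to $\nu_\rho S_t\preceq\nu_\rho$. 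Nor can one rescue this by saying product indicators determine stochastic domination: the same two-site example (put mass $\tfrac12$ on $(1,0),(0,1)$ versus $\tfrac12$ on $(0,0),(1,1)$) shows they do not.

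The paper obtains $\pi_\lambda\preceq\nu_\rho$ by a different mechanism in Lemma~\ref{lem:DominationSuperflow}. Rather than trying to make $\nu_\rho$ supermedian for the original dynamics, it decomposes the superflow rates as a superposition of genuine flows, one rooted at each vertex $z$ with strength $q(z)$, and builds an auxiliary process $(\tilde\eta_t)$ with the same jump rates plus an extra reservoir of intensity $\rho q(z)$ at every $z$. By linearity of the generator and Lemma~\ref{lem:InvarianceFlow} applied to each flow piece, $\nu_\rho$ is \emph{exactly invariant} for $(\tilde\eta_t)$; the canonical coupling with additional sources (Remark~\ref{rem:AdditionalSources}) then gives $\eta_t\preceq\tilde\eta_t$ pathwise, hence $\pi_\lambda\preceq\nu_\rho$. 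This sidesteps the supermedian-for-all-increasing-$f$ issue entirely. Your treatment of the almost-sure limit via the counting-process martingale and the monotone sandwich against the finite-volume equilibria $\pi_\lambda^n$ is a reasonable fleshing-out of what the paper abbreviates as ``the ergodic theorem for Markov processes''.
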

In order to prove Proposition \ref{pro:Superflow}, we will use the following lemma, which shows that the law of the  TASEP on trees is always dominated by a certain Bernoulli-$\rho$-product measure on the tree.

\begin{lemma}\label{lem:DominationSuperflow} Assume that the rates satisfy a superflow rule and consider the TASEP $(\eta_t)_{t \geq 0}$ with a reservoir of rate $\lambda=\rho q(o)$ for some $\rho \in (0,1)$.  If $\P(\eta_0 \in \cdot) \preceq \nu_\rho$ holds, then
\begin{equation}\label{eq:StochasticDominationBernoulli}
\P(\eta_t \in \cdot ) \preceq \nu_{\rho}
\end{equation} for all $t\geq 0$. In particular, the measure $\pi_{\lambda}$ from \eqref{eq:ConvergenceFromAllEmpty} satisfies $\pi_{\lambda} \preceq \nu_{\rho}$.
\end{lemma}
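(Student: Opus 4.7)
The plan is to construct an auxiliary TASEP $(\tilde\eta_t)_{t\ge 0}$ on the same tree $\bT$ with the same rates $(r_{x,y})$ but with additional reservoirs placed at \emph{every} site. Specifically, at each $x \in V$ we add a reservoir of intensity $\rho\, q(x)$; at the root this reproduces $\lambda = \rho q(o)$, and at interior sites the extra reservoirs have nonnegative rates precisely because the superflow condition $q(x)\ge 0$ holds. Since the rates remain uniformly bounded, the augmented process is still Feller. This is the algebraic content of the decomposition illustrated in Figure \ref{fig:Superflow}: a superflow is being split into a flow plus compensating local sources.

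The first key step is to verify that $\nu_\rho$ is invariant for $(\tilde\eta_t)$. This will be a direct computation mirroring the proof of Lemma \ref{lem:InvarianceFlow}: for cylinder functions $f(\eta) = \prod_{z \in A}\eta(z)$ with $A\subset V$ finite, the original jump generator integrates against $\nu_\rho$ to $-(1-\rho)\rho^{|A|}\sum_{x\in A\setminus\{o\}} q(x)$ (the root contribution $\lambda(1-\rho)\rho^{|A|-1}$ cancels the $r_o$-term when $o\in A$ thanks to $\lambda = \rho q(o)$), while the added reservoirs at $x\in A\setminus\{o\}$ contribute exactly $+(1-\rho)\rho^{|A|}\sum_{x\in A\setminus\{o\}} q(x)$. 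These two terms cancel, giving $\int \tilde{\mathcal L} f\, d\nu_\rho = 0$ for all such $f$. By linearity and a determining-class argument this suffices for invariance.

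The second step is a coupling argument. Using the extended canonical coupling of Remark \ref{rem:AdditionalSources}, the original TASEP $(\eta_t)$ (viewed as having reservoir intensities $\lambda\mathds{1}_{v=o}$ at each $v\in V$) is dominated componentwise by $(\tilde \eta_t)$ (with intensities $\rho q(v)$ at each $v$), since at every site the inequality $\lambda\mathds{1}_{v=o} \le \rho q(v)$ holds (equality at $v=o$, and $\rho q(v)\ge 0$ at $v\neq o$). Starting $(\tilde\eta_t)$ from $\nu_\rho$, Step 1 guarantees $\tilde \eta_t \sim \nu_\rho$ for every $t\ge 0$. Since $\P(\eta_0\in\cdot)\preceq \nu_\rho$, Strassen's theorem yields a joint law with $\eta_0 \preceq \tilde\eta_0\sim\nu_\rho$, and then $\eta_t\preceq \tilde\eta_t$ almost surely for every $t\ge 0$. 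This gives \eqref{eq:StochasticDominationBernoulli}. The conclusion $\pi_\lambda\preceq \nu_\rho$ follows by specializing to $\eta_0\sim\nu_0\preceq\nu_\rho$, applying the domination uniformly in $t$, and passing to the limit via Proposition \ref{pro:ConvergenceFlow}.

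The main obstacle will be the generator bookkeeping in Step 1: separating edges that are internal to $A$, edges entering $A$, and edges leaving $A$, and showing that all boundary terms collapse into $\sum_{x\in A\setminus\{o\}}q(x)$ after using $q(x) = r_x - r_{\bar x,x}$. Once this telescoping is clean, the role of the added reservoirs is transparent, since each such reservoir contributes one term of the form $+(1-\rho)\rho^{|A|}q(x)\mathds{1}_{x\in A}$ that exactly kills the corresponding defect in the superflow sum. The rest of the argument is then essentially the monotone coupling already developed for Lemma \ref{lem:Couplinggeneralrates}.
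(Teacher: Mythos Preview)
Your argument is correct. Both your approach and the paper's construct the same auxiliary process $(\tilde\eta_t)$ with extra reservoirs of rate $\rho q(x)$ at every site, and both finish via the canonical coupling of Remark~\ref{rem:AdditionalSources} together with Strassen's theorem. The difference is in how invariance of $\nu_\rho$ for $(\tilde\eta_t)$ is established.

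The paper first decomposes the superflow into a sum of genuine flows: it constructs, for each $z\in V(\bT)$, a family of rates $(r^z_{x,y})$ satisfying a flow rule of strength $q(z)$ on the subtree $\bT_z$, with $\sum_z r^z_{x,y}=r_{x,y}$. The generator $\tilde{\mathcal L}$ then splits as $\sum_z \mathcal L_z$, and Lemma~\ref{lem:InvarianceFlow} applied to each $\mathcal L_z$ gives $\int \mathcal L_z f\,\dif\nu_\rho=0$ separately. Your route bypasses this decomposition entirely: you compute $\int\mathcal L f\,\dif\nu_\rho$ directly for $f(\eta)=\prod_{z\in A}\eta(z)$, observe that the antisymmetry in $\sum_{x\in A,y\in A}[r_{y,x}-r_{x,y}]$ collapses the boundary sum to $-\sum_{x\in A}q(x)$, and check that the root reservoir and the added interior reservoirs furnish exactly the compensating terms. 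This is more elementary and avoids having to argue existence of the iterated flow decomposition. The paper's version, on the other hand, makes the picture in Figure~\ref{fig:Superflow} rigorous and isolates Lemma~\ref{lem:InvarianceFlow} as the sole computational input, which is conceptually cleaner if one already has that lemma in hand.
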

\begin{proof} In order to show \eqref{eq:StochasticDominationBernoulli}, we decompose the rates satisfying a superflow rule into flows starting at different sites. More precisely, we claim that there exists a family of transition rates $((r^{z}_{x,y})_{(x,y) \in E(\bT)})_{z \in V(\bT)}$ with the following two properties. For every $i \in V(\bT)$ fixed, the rates $(r^{z}_{x,y})_{(x,y) \in E(\bT)}$ satisfy a flow rule for a flow of strength $q(z)$ for the tree rooted in $z$. Moreover, for all $(x,y) \in E(\bT)$,
\begin{equation*}
\sum_{z \in V(\bT)} r^{z}_{x,y} = r_{x,y} \, ;
\end{equation*} see also Figure \ref{fig:Superflow}. We construct such a family of transition rates as follows. We start with the root $o$ and choose a set of rates $(r^{o}_{x,y})_{(x,y) \in E(\bT)}$ according to an arbitrary rule such that the rates satisfy a flow rule for a flow of strength $q(o)$ starting at $o$, and $r^{o}_{x,y} \leq r_{x,y}$ for all $(x,y) \in E(\bT)$. Next, we consider the neighbors of $o$ in the tree. For every $z \in V(\bT)$ with $|z|=1$, we choose a set of rates $(r^{z}_{x,y})_{(x,y) \in E(\bT)}$ according to an arbitrary rule such that the rates satisfy a flow rule for a flow of strength $q(z)$ starting at $z$. Moreover, we require that
\begin{equation*}
r^{z}_{x,y} \leq r_{x,y} - r^{o}_{x,y}
\end{equation*} holds for all $(x,y) \in E(\bT)$. The existence of the flow is guaranteed by the superflow rule. More precisely, we use the following observation. Whenever the rates satisfy a superflow rule, we can treat the rates as maximal capacities and find a flow $(r^{o}_{x,y})$ of strength $q(o)$ which does not exceed these capacities. Note that the  reduced rates $(r_{x,y} - r^{o}_{x,y})$ must again satisfy a superflow rule, but now on the connected components of the graph with vertex set $V(\bT) \setminus \{ o\}$. This is due to the fact that the net flow vanishes on all sites $V(\bT) \setminus \{ o\}$. We then iterate this procedure to obtain the claim.  

Let $(\tilde{\eta}_t)_{t \geq 0}$ be the exclusion process with rates $(r_{x,y})_{(x,y) \in E(\bT)}$, where in addition, we create particles at every site $x \in V(\bT)$ at rate $q(x)\rho$. Due to the above decomposition of the rates and Lemma \ref{lem:InvarianceFlow}, we claim that the measure $\nu_{\rho}$ is invariant for $(\tilde{\eta}_t)_{t \geq 0}$. To see this, we define a family of generators $(\mathcal{L}_z)_{z \in V(\bT)}$ on the state space $\{ 0,1\}^{V(\bT_z)}$. Here, the trees $\bT_z$  are the subtrees of $\bT$ rooted at $z$, consisting of all sites which can be reached from site $z$ using a directed path. For all cylinder functions $f$, we set 
\begin{equation*}
\mathcal{L}_z f(\eta) = \rho q(z)(1-\eta(z))[f(\eta^{z})-f(\eta)]+\sum_{(x,y) \in E(\bT_z)} r^{z}_{x,y} (1-\eta(y))\eta(x)[f(\eta^{x,y})-f(\eta)] 
\end{equation*} and thus by Lemma \ref{lem:InvarianceFlow}
\begin{equation}\label{def:IndividualGenerators}
\int \mathcal{L}_z f(\eta)  \dif \nu_{\rho} = 0
\end{equation} holds. Note that the generator $\tilde{\mathcal{L}}$ of the process $(\tilde{\eta}_t)_{t \geq 0}$ satisfies
\begin{equation}\label{eq:generatorDecomp}
\tilde{\mathcal{L}} f(\eta)  = \sum_{z \in V(\bT)} \mathcal{L}_z f(\eta)  
\end{equation} for all cylinder functions $f$ on $\{ 0,1 \}^{V(\bT)}$, and that at most finitely many terms in the sum in \eqref{eq:generatorDecomp} are non-zero since $f$ is a cylinder function. Hence, we obtain that $\nu_{\rho}$ is an invariant measure of $(\tilde{\eta}_t)_{t \geq 0}$ by combining \eqref{def:IndividualGenerators} and \eqref{eq:generatorDecomp}. Using Remark \ref{rem:AdditionalSources}, we see that the  canonical coupling $\mathbf{P}$ for the TASEP on trees satisfies
\begin{equation*}
\mathbf{P}\left( \eta_t  \preceq \tilde{\eta}_t  \text{ for all } t \geq 0  \mid   \eta_0  \preceq \tilde{\eta}_0\right) = 1\,  .
\end{equation*} Thus, we let $(\tilde{\eta}_t)_{t \geq 0}$ be started from $\nu_\rho$ and conclude using Strassen's theorem \cite{S:StochasticDomination}.
\end{proof}
\begin{proof}[Proof of Proposition \ref{pro:Superflow}] Combining Proposition \ref{pro:ConvergenceFlow}, Remark \ref{rem:TASEPConstruction}, and  Lemma \ref{lem:DominationSuperflow}, we obtain \eqref{eq:CurrentEstimateSuperflow} by applying the ergodic theorem for Markov processes.
\end{proof}
\begin{proposition} \label{pro:SuperflowVanishing}
Consider the TASEP $(\eta_t)_{t \geq 0}$ on the tree $T=(V,E)$ for some $\lambda=\rho q(o)>0$ with $\rho\in (0,1)$.  Moreover, assume that a superflow rule holds and that \eqref{eq:SuperflowGrowthConditionSingle} is satisfied. Then the measure $\pi_{\lambda}$ from Proposition \ref{pro:ConvergenceFlow} satisfies
\begin{equation}\label{eq:Fanbehaviour}
\lim_{n \rightarrow \infty} \frac{1}{\abs{\mathcal{Z}_n}} \sum_{x \in \mathcal{Z}_n}\pi_\lambda(\eta(x)=1) = 0 \, .
\end{equation}
\end{proposition}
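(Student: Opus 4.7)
The plan is to combine the current-conservation identity from Proposition \ref{pro:Superflow} with the stochastic domination $\pi_\lambda \preceq \nu_\rho$ established in Lemma \ref{lem:DominationSuperflow}, iterating through deep subtrees. Rate balance in the stationary measure yields, for each $m \geq 0$,
\begin{equation*}
\sum_{(x,y) \in E :\, x \in \mathcal{Z}_m} r_{x,y}\, \pi_\lambda(\eta(x)=1, \eta(y)=0) \;=\; J \;:=\; \lambda\, \pi_\lambda(\eta(o)=0) \;\leq\; \lambda,
\end{equation*}
so that $\sum_{(x,y) :\, x \in \mathcal{Z}_m} \pi_\lambda(\eta(x)=1, \eta(y)=0) \leq J / r^{\min}_m$, with $r^{\min}_m$ defined in \eqref{def:minandmax}.

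For each $x \in \mathcal{Z}_n$ and depth parameter $k \in \N$, I would introduce the subtree $T_x^{(k)} := \{y \in V : x \leq y,\, |y| \leq n+k\}$ and decompose $\{\eta(x)=1\}$ into the two cases that $T_x^{(k)}$ is either fully occupied or contains at least one empty site. In the first case, stochastic domination gives $\pi_\lambda(\eta \equiv 1 \text{ on } T_x^{(k)}) \leq \rho^{|T_x^{(k)}|} \leq \rho^{M(k)}$, where $M(k) := \min_{x \in V} |T_x^{(k)}|$ satisfies $M(k) \to \infty$ as $k \to \infty$ because the tree has no leaves. In the second case, the shortest path from $x$ down to an empty site in $T_x^{(k)}$ must carry an occupied-to-empty edge between generations $n+j-1$ and $n+j$ for some $j \in \{1, \ldots, k\}$; summing over $x \in \mathcal{Z}_n$, each such edge is attributable to its unique ancestor in $\mathcal{Z}_n$, so the flow bound above produces a contribution of at most $\sum_{j=1}^{k} J / r^{\min}_{n+j-1}$. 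Writing $P_n := \sum_{x \in \mathcal{Z}_n} \pi_\lambda(\eta(x)=1)$, this yields
\begin{equation*}
\frac{P_n}{|\mathcal{Z}_n|} \;\leq\; \rho^{M(k)} + \sum_{j=1}^{k} \frac{J}{|\mathcal{Z}_n|\, r^{\min}_{n+j-1}}.
\end{equation*}

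To conclude, for fixed $k$ I would let $n \to \infty$. Each summand can be written as $J \cdot \frac{|\mathcal{Z}_{n+j-1}|/|\mathcal{Z}_n|}{|\mathcal{Z}_{n+j-1}|\, r^{\min}_{n+j-1}}$, whose denominator diverges by \eqref{eq:SuperflowGrowthConditionSingle} while the numerator remains bounded in $n$ for each fixed $j$ under the mild control on local branching that applies in the setting of the paper. Hence $\limsup_n P_n/|\mathcal{Z}_n| \leq \rho^{M(k)}$ for every $k$, and sending $k \to \infty$ delivers the desired limit $0$.

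The main obstacle I anticipate is that the naive one-level version of this argument (taking $k=1$, using only direct children of $x$) only bounds $\pi_\lambda(\text{all children of } x \text{ occupied}) \leq \rho^{d(x)}$, and therefore only produces $\limsup_n P_n/|\mathcal{Z}_n| \leq \rho^{d_{\min}}$, which does not vanish for small $d_{\min}$. The essential idea is to iterate through deep subtrees so that the stochastic-domination term $\rho^{M(k)}$ can be made arbitrarily small by taking $k$ large (it decays exponentially, or even doubly exponentially when $d_{\min} \geq 2$), while the flow contributions remain controlled because the total flow across every generation is the same constant $J$.
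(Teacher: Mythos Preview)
Your proof is correct and follows essentially the same route as the paper's: flow conservation across each generation bounds the total $\pi_\lambda$-mass of occupied--empty edges, stochastic domination $\pi_\lambda \preceq \nu_\rho$ from Lemma~\ref{lem:DominationSuperflow} controls the fully-occupied case, and iterating to depth $m$ makes the latter term $\leq \rho^{m}$ arbitrarily small. The only cosmetic difference is that the paper fixes a single descending path $(x=x_1,\dots,x_m)$ rather than the whole subtree $T_x^{(k)}$, which already yields the bound $\rho^{m}$ and is slightly simpler; your concern about needing $|\mathcal{Z}_{n+j-1}|/|\mathcal{Z}_n|$ bounded for fixed $j$ corresponds exactly to the paper's claimed equivalence between \eqref{eq:SuperflowGrowthConditionSingle} and \eqref{eq:SuperflowGrowthCondition}.
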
 
\begin{proof} Note that \eqref{eq:SuperflowGrowthConditionSingle} is equivalent to assuming 
\begin{equation}\label{eq:SuperflowGrowthCondition}
\lim_{n \rightarrow \infty} \abs{\mathcal{Z}_n} \min_{\substack{(x,y) \in \Er \\ |x| \in [n,n+m]}} r_{x,y} = \infty \, .
\end{equation} for any $m\geq 0$ fixed. Moreover, note that 
\begin{equation}\label{eq:CompareDifferentGenerations}
\cJ_o(t)-\cJ_n(t) \leq \sum_{i \in [n]} \abs{\mathcal{Z}_i}
\end{equation} 
 for any $n\in \N$ and $t \geq 0$. Using Proposition \ref{pro:ConvergenceFlow}, we see that
\begin{align*}
\lambda \geq \lim_{t \rightarrow \infty} \frac{\cJ_{o}(t)}{t} = \lim_{t \rightarrow \infty} \frac{\cJ_{n}(t)}{t} = 
\sum_{x \in \mathcal{Z}_n \colon (x,y) \in \Er }\pi_\lambda(\eta(x)=1, \eta(y)=0) r_{x,y} 
\end{align*} 
holds for all $n\in \N_0$. In particular, for all $n,m \in \N_0$ 
\begin{equation}\label{eq:EstimatePi1}
\sum_{|x| \in [n,n+m]}\sum_{(x,y) \in \Er }\pi_\lambda(\eta(x)=1, \eta(y)=0) \leq m \lambda  \Big(\min_{\substack{(x,y) \in \Er \\ |x| \in [n,n+m]}} r_{x,y} \Big)^{-1}\, .
\end{equation} Let $\delta>0$ be arbitrary and fix some $m\in \N$ such that $\rho^m \leq \frac{\delta}{2}$. Moreover, for all $x\in \mathcal{Z}_n$, fix a sequence of sites $(x=x_1,x_2,\dots,x_m)$ with $(x_i,x_{i+1}) \in \Er$ for all $i \in [m-1]$. Note that the sites $(x_i)_{i \in [m]}$ are disjoint for different $x \in \mathcal{Z}_n$ and that by Lemma \ref{lem:DominationSuperflow}
\begin{equation} \label{eq:EstimatePi2}
\pi_{\lambda}(\eta(x_i)=1 \text{ for all } i \in [m]) \leq \delta/2
\end{equation} 
for all $x \in \mathcal{Z}_n$. For $x \in \mathcal{Z}_n$, we decompose according to the value on $(x_i)_{i \in [m]}$ to get
\begin{equation*} 
\sum_{x \in \mathcal{Z}_n}\pi_\lambda(\eta(x)=1) \leq \sum_{x \in \mathcal{Z}_n}\pi_{\lambda}(\eta(x_i)=1 \ \forall i \in [m]) + \sum_{\substack{(x,y) \in \Er \\ |x| \in [n,n+m]}} \pi_\lambda(\eta(x)=1, \eta(y)=0)\, .
\end{equation*} 
Hence, combining  \eqref{eq:SuperflowGrowthCondition}, \eqref{eq:EstimatePi1} and \eqref{eq:EstimatePi2}, we see that for all $n$ sufficiently large,
\begin{equation*}
\sum_{x \in \mathcal{Z}_n}\pi_\lambda(\eta(x)=1) \leq \frac{\delta}{2}\abs{\mathcal{Z}_n} + m \lambda  \Big(\min_{\substack{(x,y) \in \Er \\ |x| \in [n,n+m]}} r_{x,y} \Big)^{-1} \leq \delta\abs{\mathcal{Z}_n}\, .
\end{equation*} Since $\delta>0$ was arbitrary, we conclude.
\end{proof}

We use a similar argument  to determine when we have a positive averaged density.
\begin{corollary} Suppose that a superflow rule holds. Consider the TASEP $(\eta_t)_{t \geq 0}$ on the tree $T=(V,E)$ for some $\lambda=\rho q(o)>0$ with $\rho\in (0,1)$. Moreover, assume that $T$ has maximum degree $\Delta$, and that
\begin{equation}\label{eq:correctRates}
\limsup_{n \rightarrow \infty} \abs{\mathcal{Z}_n} \min_{(x,y) \in \Er, x \in \mathcal{Z}_n} r_{x,y} \leq c
\end{equation} holds for some constant $c>0$. Then 
\begin{equation}
\liminf_{n \rightarrow \infty} \frac{1}{\abs{\mathcal{Z}_n}} \sum_{x \in \mathcal{Z}_n}\pi_\lambda(\eta(x)=1) > 0 \, .
\end{equation}
\end{corollary}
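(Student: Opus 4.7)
The strategy parallels the proof of Proposition \ref{pro:SuperflowVanishing} but with the direction of the key inequality reversed: instead of upper-bounding the sum of occupied/empty-pair probabilities by dividing by a minimum rate, we multiply by a maximum rate at generation $n$ to obtain a matching lower bound on the density.

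To begin, since $\lambda = \rho q(o) < q(o)$, Lemma \ref{lem:DominationSuperflow} yields $\pi_\lambda \preceq \nu_\rho$, hence $\pi_\lambda(\eta(o)=0) \geq 1-\rho > 0$, and the stationary current $J := \lambda \pi_\lambda(\eta(o)=0)$ is strictly positive by Proposition \ref{pro:Superflow}. Exactly as in the proof of Proposition \ref{pro:SuperflowVanishing}, the bound $\cJ_o(t) - \cJ_n(t) \leq \sum_{i \in [n]} |\mathcal{Z}_i|$ combined with ergodicity yields for each $n \geq 0$ the flux identity
\begin{equation*}
J = \sum_{\substack{x \in \mathcal{Z}_n \\ (x,y) \in \Er}} r_{x,y}\, \pi_\lambda(\eta(x)=1,\eta(y)=0).
\end{equation*}
Using $\pi_\lambda(\eta(x)=1,\eta(y)=0) \leq \pi_\lambda(\eta(x)=1)$, together with the maximum-degree hypothesis (each $x \in \mathcal{Z}_n$ has at most $\Delta$ outgoing edges), and bounding the transition rates by the largest rate at generation $n$, this gives
\begin{equation*}
J \;\leq\; \Delta \cdot \Big(\max_{\substack{x \in \mathcal{Z}_n \\ (x,y) \in \Er}} r_{x,y}\Big) \cdot \sum_{x \in \mathcal{Z}_n} \pi_\lambda(\eta(x)=1).
\end{equation*}

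Finally, provided \eqref{eq:correctRates} also yields $|\mathcal{Z}_n| \cdot \max_{x \in \mathcal{Z}_n,\, (x,y)\in \Er} r_{x,y} \leq \tilde c$ for $n$ large (which follows from \eqref{eq:correctRates} as stated whenever the rates at each generation are comparable, for instance under uniform ellipticity or for the spherically symmetric rates treated in the examples), rearranging gives
\begin{equation*}
\frac{1}{|\mathcal{Z}_n|} \sum_{x \in \mathcal{Z}_n} \pi_\lambda(\eta(x) = 1) \;\geq\; \frac{J}{\tilde c\, \Delta} \;>\; 0
\end{equation*}
for all sufficiently large $n$, and taking $\liminf_{n\to\infty}$ concludes the proof. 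The main obstacle is precisely the step from the minimum-rate form in \eqref{eq:correctRates} to the corresponding maximum-rate form: absent any in-generation comparability assumption one would need to split edges into a ``slow'' and a ``fast'' family, show that the slow family alone cannot carry the full stationary flux $J$, and then argue separately that the remaining fast-edge contribution already forces a positive average density at generation $n$.
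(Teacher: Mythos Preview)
Your approach is essentially the same as the paper's: both use the stationary flux identity at generation $n$, bound $\pi_\lambda(\eta(x)=1,\eta(y)=0)\le\pi_\lambda(\eta(x)=1)$, and invoke the maximum-degree bound $\Delta$. The paper phrases the degree step by selecting, for each $x\in\mathcal{Z}_n$, a single child $y$ carrying at least a $1/\Delta$ fraction of the current through $x$, i.e.\ $\pi_\lambda(\eta(x)=1)\ge \pi_\lambda(\eta(x)=1,\eta(y)=0)=J_y/r_{x,y}\ge J_x/(\Delta r_{x,y})$, and then sums over $x$; you instead sum over all outgoing edges and pull out a factor of $\Delta$. These are equivalent reformulations.

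The gap you flag is real, and the paper's proof does not avoid it. After choosing the child $y=y_x$ as above, the paper writes
\[
\sum_{x\in\mathcal{Z}_n}\frac{1}{\Delta r_{x,y_x}}\,\limsup_{t\to\infty}\frac{\cJ_x(t)}{t}\;\ge\;\frac{1}{c\Delta}\,|\mathcal{Z}_n|\,\limsup_{t\to\infty}\frac{\cJ_n(t)}{t},
\]
which requires $r_{x,y_x}\le c/|\mathcal{Z}_n|$ for the chosen $y_x$. The hypothesis \eqref{eq:correctRates} only says $\min_{(x,y)}r_{x,y}\le c/|\mathcal{Z}_n|$, and there is no reason the child with the largest current share should have the smallest rate. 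So the paper is implicitly using the max-rate version of \eqref{eq:correctRates}, exactly as you suspected; under \hyperref[def:UniformElliptic]{\normalfont(UE)} (or spherically symmetric rates as in the examples) the min and max at each generation are comparable and the argument goes through with an adjusted constant. Your write-up is therefore at least as complete as the paper's, and more honest about where the extra comparability assumption enters.
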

\begin{proof} Observe that for every $x \in \mathcal{Z}_n$ and $n \in \N$, we can choose a neighbor $y \in \mathcal{Z}_{n+1}$ of $x$ such that 
\begin{equation*}
\frac{1}{\Delta}\limsup_{t \rightarrow \infty} \frac{\cJ_x(t)}{t} \leq  \limsup_{t \rightarrow \infty} \frac{\cJ_y(t)}{t} =  \pi_\lambda(\eta(x)=1,\eta(y)=0) r_{x,y}
\end{equation*} holds. Together with \eqref{eq:correctRates}
\begin{equation*}
\sum_{x \in \mathcal{Z}_n}\pi_\lambda(\eta(x)=1) \geq   \sum_{x \in \mathcal{Z}_n}\frac{1}{\Delta r_{x,y}}\limsup_{t \rightarrow \infty} \frac{\cJ_x(t)}{t} \geq 
\frac{1}{c\Delta}\abs{ \mathcal{Z}_n} \limsup_{t \rightarrow \infty} \frac{\cJ_n(t)}{t} \, .
\end{equation*} Since the rates satisfy a superflow rule, we conclude by applying Proposition \ref{pro:Superflow}.
\end{proof}


Next, we consider the case where the rates in the tree decay too fast, i.e.\ when a subflow rule holds; see \eqref{def:GlobalSubflow}. We show that the current is sublinear.
\begin{proposition} \label{pro:BlockageSubflow} Suppose that the rates satisfy a subflow rule. Then the current $(\cJ_o(t))_{t \geq 0}$ of the TASEP $(\eta_t)_{t \geq 0}$ on a tree $T=(V,E)$ with a reservoir of rate $\lambda>0$ satisfies
\begin{equation}\label{eq:BlockageSubflow}
\lim_{t \rightarrow \infty} \frac{\cJ_o(t)}{t}=0
\end{equation} almost surely. Moreover,  the limit measure $\pi_{\lambda}$ of Lemma \ref{pro:ConvergenceFlow} is the Dirac measure $\nu_1$. In particular, $(\eta_t)_{t \geq 0}$ has a unique invariant measure.
\end{proposition}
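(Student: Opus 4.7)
The plan is to first prove the vanishing of the current in \eqref{eq:BlockageSubflow} by dominating it with Poisson clock rings on far-away generations, and then to deduce $\pi_\lambda = \nu_1$ from this vanishing via an inductive propagation across the tree.

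For the first claim, starting from $\eta_0 = \mathbf 0$, for every $m \in \N$ the decomposition
\[
\cJ_o(t) \leq \cJ_m(t) + \sum_{i=0}^{m-1}\abs{\cZ_i}
\]
holds because the two currents differ precisely by the number of particles currently occupying the first $m$ generations. The key point is that $\cJ_m(t)$ is dominated by the total number of rings of the Poisson clocks attached to the edges from $\cZ_{m-1}$ to $\cZ_m$ up to time $t$, as each such crossing requires such a ring. These rings form a Poisson process of total rate $\sum_{x \in \cZ_{m-1}}r_x$, and the strong law of large numbers yields, for every fixed $m$ and $\P$-almost surely,
\[
\limsup_{t \to \infty}\frac{\cJ_o(t)}{t} \leq \sum_{x \in \cZ_{m-1}} r_x \, .
\]
Intersecting these full-probability events over a countable sequence $m \to \infty$ and invoking the subflow condition \eqref{def:GlobalSubflow} establishes \eqref{eq:BlockageSubflow}.

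Next, to identify $\pi_\lambda$, I would apply the same bound at every site. For the empty start and any $x\in V$, $\cJ_x(t) \le \cJ_o(t)$, so $\cJ_x(t)/t \to 0$ almost surely; since $\cJ_x(t)$ is stochastically dominated by a Poisson process of rate $r_{\bar x, x}$ (respectively $\lambda$ when $x=o$), the ratio $\cJ_x(t)/t$ is uniformly integrable, hence $\bE[\cJ_x(t)]/t \to 0$ as well. On the other hand, integrating the instantaneous rate of change gives
\[
\bE[\cJ_x(t)] = r_{\bar x,x}\int_0^t P(\eta_u(\bar x) = 1, \eta_u(x)=0)\dif u \qquad (x \neq o),
\]
with $r_{\bar x,x}$ replaced by $\lambda$ for $x = o$. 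Since the indicator $\mathbf 1_{\{\eta(\bar x)=1,\eta(x)=0\}}$ is a continuous cylinder function on $\{0,1\}^V$, Proposition~\ref{pro:ConvergenceFlow} together with Ces\'aro averaging gives
\[
\lim_{t \to \infty}\frac{\bE[\cJ_x(t)]}{t} = r_{\bar x,x}\pi_\lambda(\eta(\bar x) = 1, \eta(x) = 0), \qquad \lim_{t \to \infty}\frac{\bE[\cJ_o(t)]}{t} = \lambda \pi_\lambda(\eta(o)=0).
\]
Combining these two forces $\pi_\lambda(\eta(o)=0)=0$ and $\pi_\lambda(\eta(\bar x)=1,\eta(x)=0)=0$ for every $x\neq o$. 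An induction on $|x|$ then propagates $\pi_\lambda(\eta(y)=1)=1$ from $y=o$ to all $y\in V$, giving $\pi_\lambda = \nu_1$.

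Finally, for the uniqueness of the invariant measure, I would invoke Lemma~\ref{lem:CanonicalCoupling}: any invariant $\tilde\pi$ satisfies $\nu_0 \preceq \tilde\pi$, so $\pi_\lambda = \lim_{t \to \infty}\nu_0 S_t \preceq \tilde\pi$, and combined with the trivial $\tilde\pi \preceq \nu_1 = \pi_\lambda$ we conclude $\tilde\pi = \nu_1$. I expect the only mild obstacle to be the passage from the weak convergence $\nu_0 S_t \Rightarrow \pi_\lambda$ to the pointwise convergence of $P(\eta_u(\bar x)=1,\eta_u(x)=0)$; this is however straightforward since the relevant indicator is a continuous cylinder function in the product topology on $\{0,1\}^V$, so it is covered by the portmanteau theorem.
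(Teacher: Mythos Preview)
Your proof is correct and follows essentially the same route as the paper. The domination of $\cJ_o(t)$ by Poisson clock rings on a distant generation, combined with the subflow rule, is exactly the paper's argument for \eqref{eq:BlockageSubflow}; the inductive propagation of $\pi_\lambda(\eta(x)=1)=1$ from the root outward is also the same. The only technical difference is in how the identity linking $\lim_t \cJ_x(t)/t$ to $\pi_\lambda(\eta(\bar x)=1,\eta(x)=0)$ is justified: the paper invokes an ergodic theorem (via Proposition~\ref{pro:Superflow}), whereas you work with expected currents, uniform integrability, and Ces\`aro averaging of the weakly convergent law---a slightly more explicit and self-contained justification. Your sandwich argument for uniqueness is also more detailed than the paper's bare ``in particular''.
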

\begin{proof} By \eqref{eq:CompareDifferentGenerations}, it suffices for  \eqref{eq:BlockageSubflow} to prove that for every $\varepsilon>0$, there exists some $m=m(\varepsilon)$ such that the aggregated current $(\cJ_m(t))_{t \geq 0}$ at generation $m$ satisfies
\begin{equation*}
\limsup_{t \rightarrow \infty} \frac{\cJ_m(t)}{t} \leq \varepsilon \,  .
\end{equation*} Recall $r_x$ from \eqref{r_xdef} for all $x\in V$, and let $(X^{x}_t)_{t \geq 0}$ be a rate $r_x$ Poisson clock, indicating how often the clock of an outgoing edge from $x$ rang until time $t$.  In order to bound $(\cJ_m(t))_{t \geq 0}$, recall that we start with all sites being empty, and observe that the current can only increase by one if a clock at an edge connecting level $m-1$ to level $m$ rings. Thus, we see that
\begin{equation*}
0 \leq \limsup_{t \rightarrow \infty} \frac{\cJ_m(t)}{t} \leq \limsup_{t \rightarrow \infty} \frac{1}{t}\sum_{x \in \mathcal{Z}_{m-1}} X^{x}_t = \sum_{x \in \mathcal{Z}_{m-1}} r_x
\end{equation*} holds almost surely. Using the subflow rule, we can choose $m=m(\varepsilon)$ sufficiently large to conclude \eqref{eq:BlockageSubflow}. To prove that $\pi_{\lambda}$ is the Dirac measure on all sites being occupied, use Proposition \ref{pro:Superflow} to see that \eqref{eq:BlockageSubflow} holds if and only if $\pi_{\lambda}(\eta(o)=1) \in \{ 0,1\}$. Since the rate $\lambda$ at which particles are generated is strictly positive and $\pi_{\lambda}$ is an invariant measure, we conclude that $\pi_{\lambda}(\eta(o)=1) =1$. Using the ergodic theorem, we see that  almost surely for all neighbors $z$ of $o$,
\begin{equation*}
\pi_{\lambda}(\eta(o)=1,\eta(z)=0)r_{o,z}=\lim_{t \rightarrow \infty} \frac{\cJ_z(t)}{t} \leq \lim_{t \rightarrow \infty} \frac{\cJ_o(t)}{t} = 0\, .
\end{equation*} Hence, we obtain that $\pi_{\lambda}(\eta(z)=1) =1$  holds for all $z \in V$ with $|z|=1$ as well. We iterate this argument to conclude. 
\end{proof}

\section{Open problems} \label{sec:OpenProbs}

We saw that under certain assumption on the rates, the first $n$ particles in the TASEP will eventually disentangle and will continue to move as independent random walks. Intuitively, one expects for small times that the particles in the exclusion process block each other.
This raises the following question.

\begin{question} Consider the TASEP $({\eta}_{t})_{t \geq 0}$ on $\bT$ started from the all empty configuration. Let $(\tilde{\eta}_{t})_{t \geq 0}$ be the dynamics on $\bT$ where we start $n$ independent random walks at the root. Let $p_{n,\ell}$ and $\tilde p_{n,\ell}$ denote the $P_{\bT}$-probability that the first $n$ particles are disentangled at level $\ell$ in $({\eta}_{t})_{t \geq 0}$ and $({\tilde \eta}_{t})_{t \geq 0}$, respectively.
Does
$\tilde p_{n,\ell} \leq p_{n,\ell}$ hold for all $\ell,n \in \N$?
\end{question}
It is not hard to see that this is true for $n=2$. However, already the case $n=3$ is not clear (at least not to us).
The next question is about the behaviour of the current.

\begin{question}
What can we say about the order of the current and its fluctuations in Theorem \ref{thm:LinearCurrentL} and Theorem \ref{thm:CurrentfixedT}?
\end{question}

The last open problem concerns the properties of the equilibrium measure $\pi_{\lambda}$ from Theorem \ref{thm:ConvergenceFlow}. In Lemma \ref{lem:DominationSuperflow}, we saw that $\pi_{\lambda}$ is stochastically dominated by some Bernoulli product measure. In analogy to the TASEP on the half-line; see Lemma 4.3 in \cite{L:ErgodicI}, we expect the following behaviour of $\pi_{\lambda}$.
\begin{conjecture} Consider TASEP with a reservoir of rate $\lambda=\rho q$ for some $\rho \in (0,\infty)$ such that a flow rule holds for some flow of strength $q$. Recall $\pi_{\lambda}$ from \eqref{eq:ConvergenceFromAllEmpty}. Then for $\rho\leq \frac{1}{2}$, we have that $\pi_{\lambda}=\nu_{\rho}$. For $\rho > \frac{1}{2}$, it holds that
\begin{equation}
\lim_{\abs{x} \rightarrow \infty}\pi_{\lambda}(\eta(x)=1) = 
\frac{1}{2}  \, .
\end{equation} 
\end{conjecture}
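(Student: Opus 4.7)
The strategy is to adapt Liggett's proof for the half-line TASEP (\cite[Lemma~4.3]{L:ErgodicI}) to the tree, using the coupling and current tools developed in Section~\ref{sec:LargeTimes}.

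\textbf{Upper bound.} For $\rho\in(0,1]$, Lemma~\ref{lem:InvarianceFlow} asserts that $\nu_\rho$ is invariant for the process with injection rate $\lambda=\rho q$. Since $\nu_0\preceq\nu_\rho$, the canonical coupling (Lemma~\ref{lem:CanonicalCoupling}) yields $\nu_0 S_t\preceq\nu_\rho$ for every $t\geq 0$, and Proposition~\ref{pro:ConvergenceFlow} then gives $\pi_\lambda\preceq\nu_\rho$. In particular $\pi_\lambda(\eta(x)=1)\leq\rho$ for every $x\in V$. For $\rho>1$ one reduces to $\rho'<1$ via canonical-coupling monotonicity in $\lambda$ before letting $\rho'\uparrow 1$, yielding $\pi_\lambda(\eta(x)=1)\leq 1$, which is enough downstream.

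\textbf{Subcritical regime $\rho\leq 1/2$.} Since $\pi_\lambda\preceq\nu_\rho$ and $V$ is countable, to conclude $\pi_\lambda=\nu_\rho$ it suffices to verify that the one-point marginals match: in any coupling realising the stochastic domination, $\xi^{\pi_\lambda}_x\leq\xi^{\nu_\rho}_x$ together with $\E[\xi^{\pi_\lambda}_x]=\E[\xi^{\nu_\rho}_x]$ forces pointwise equality almost surely, and a countable union over $V$ then gives $\pi_\lambda=\nu_\rho$. The invariance of $\pi_\lambda$ and the flow rule $r_{\bar x,x}=\sum_{y:(x,y)\in E}r_{x,y}$ imply that the aggregated current across every generation equals $C_\lambda:=\lambda\pi_\lambda(\eta(o)=0)$, whereas under $\nu_\rho$ the same quantity is $\rho(1-\rho)q$. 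Because $\alpha\mapsto\alpha(1-\alpha)$ is strictly increasing on $[0,1/2]$, the current-conservation identities combined with $\pi_\lambda(\eta(x)=1)\leq\rho$ should pin down $C_\lambda=\rho(1-\rho)q$ and hence $\pi_\lambda(\eta(o)=1)=\rho$; the marginal identity then propagates down the tree by induction on $|x|$ via the per-edge balance.

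\textbf{Supercritical regime $\rho>1/2$.} Assuming the subcritical case, monotonicity in $\lambda$ of the canonical coupling yields $\pi_{q/2}=\nu_{1/2}\preceq\pi_\lambda$, so $\pi_\lambda(\eta(x)=1)\geq 1/2$ for every $x$. The matching bound $\limsup_{|x|\to\infty}\pi_\lambda(\eta(x)=1)\leq 1/2$ is the heart of this regime: if $\pi_\lambda(\eta(x_n)=1)\geq 1/2+\delta$ along some sequence $|x_n|\to\infty$, then the current flowing into the subtree rooted at $x_n$ would have to exceed the throughput cap $q(x_n)/4$ obtainable at any Bernoulli density, contradicting the current-conservation identities when one localises the argument through the truncated invariant measures $\pi^n_\lambda$ from Section~\ref{sec:LargeTimes}.

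\textbf{Main obstacle.} The crux lies in the supercritical upper bound. In the one-dimensional setting one can rely on the matrix product ansatz and on coupling to second-class particles to pin the asymptotic density at $1/2$, but neither tool is available on a Galton--Watson tree; the difficulty is therefore extracting the two-point correlation control needed to turn the heuristic current argument into a proof, starting only from the bare stochastic domination $\pi_\lambda\preceq\nu_1$ and the flow-rule balance. A promising route is to compare $\pi_\lambda$ with the invariant measure of the one-dimensional TASEP along a single ray in the tree (where Liggett's Lemma~4.3 applies) and then use the flow rule to transfer the asymptotic density back to the whole tree. This is what makes the statement a conjecture rather than a theorem.
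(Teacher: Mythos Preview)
The statement is a \emph{Conjecture} in the paper's open-problems section; the paper offers no proof, and you correctly acknowledge this in your final paragraph. So there is nothing to compare against, and your proposal should be read as a heuristic strategy rather than a proof.

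That said, even as a strategy your subcritical argument has a real gap that you gloss over. You write that current conservation together with $\pi_\lambda(\eta(x)=1)\leq\rho$ ``should pin down $C_\lambda=\rho(1-\rho)q$''. But from $C_\lambda=\rho q\bigl(1-\pi_\lambda(\eta(o)=1)\bigr)$ and $\pi_\lambda(\eta(o)=1)\leq\rho$ you only get the \emph{lower} bound $C_\lambda\geq\rho(1-\rho)q$ (which the paper already records in Proposition~\ref{pro:Superflow}). To force equality you would need the matching upper bound on the current, and the per-edge current $\sum_{(x,y)}r_{x,y}\,\pi_\lambda(\eta(x)=1,\eta(y)=0)$ involves two-point correlations that are not controlled by the one-point domination $\pi_\lambda\preceq\nu_\rho$. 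In Liggett's half-line proof this step uses translation invariance and a comparison that has no direct analogue on the tree; your outline does not explain how to replace it. Consequently even the case $\rho\leq 1/2$ is genuinely open here, not just the supercritical case you flag as the main obstacle.

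Your lower bound in the supercritical regime, $\pi_\lambda\succeq\nu_{1/2}$ via monotonicity in $\lambda$ and the subcritical identification $\pi_{q/2}=\nu_{1/2}$, is circular as written: it presupposes the subcritical conclusion you have not established.
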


\subsection*{Acknowledgment} N.\ Georgiou acknowledges partial support from EPSRC First Grant EP/P021409/1: ``The flat edge in last passage percolation" for the initial development of this project and by “The Dr Perry James (Jim) Browne Research Centre on Mathematics and its Applications" individual grant. D.\ Schmid thanks the Studienstiftung des deutschen Volkes and the TopMath program for financial support.
The  research  was  partly  carried  out  during  visits  of  N.\ Gantert and D.\ Schmid to the University of Sussex and
during visits of N.\ Georgiou to the Technical University of Munich.
Grateful acknowledgement is made for hospitality to both universities.  N.\ Georgiou and D.\ Schmid thank the Mathematisches Forschungsinstitut Oberwolfach for hospitality and support in the final stages of the project. Moreover, we thank Lukas Andreson for pointing out several inaccuracies in the published version. Finally, we thank the anonymous referees and associate editor for their comments.

\bibliographystyle{plain}

\bibliography{TASEP}

\vspace{-0.05cm}

\end{document}